\long\def\killtext#1{}
\newtheorem{theorem}{Theorem}[section]
\newtheorem{definition}[theorem]{Definition}
\newtheorem{lemma}[theorem]{Lemma}
\newtheorem{claim}[theorem]{Claim}
\newtheorem{corollary}[theorem]{Corollary}
\newtheorem{remark}[theorem]{Remark}
\newtheorem{conjecture}[theorem]{Conjecture}
\newenvironment{proof}{\medskip\noindent{\bf Proof. }}{\hfill$\square$\medskip}
\newtheorem{proposition}[theorem]{Proposition}
\newcommand{\LL}{{\cal L}}
\newcommand{\G}{{\cal G}}
\newcommand{\x}{{\bf x}}
\newcommand{\Alpha}{\boldsymbol{\alpha}}
\def\Z{\mathbb Z}
\newcommand\ackname{ACKNOWLEDGMENT}
  \newenvironment{acknowledgements}{%
      \titlepage
      \null\vfil
      \@beginparpenalty\@lowpenalty
      \begin{center}%
        \ackname
        \@endparpenalty\@M
      \end{center}}%
     {\par\vfil\null\endtitlepage}
  \newenvironment{acknowledgements}{%
      \if@twocolumn
        \section*{\abstractname}%
      \else
        \small
        \begin{center}%
          {\ackname\vspace{-.1em}\vspace{\z@}}%
        \end{center}%
        \quotation
      \fi}
      {\if@twocolumn\else\endquotation\fi}
\begin{document}
\large
\thispagestyle{empty}
\begin{doublespacing}
{\centering
OBSTACLES, SLOPES AND TIC-TAC-TOE: AN EXCURSION IN DISCRETE GEOMETRY AND COMBINATORIAL GAME THEORY
%\maketitle

By

V S PADMINI MUKKAMALA

A dissertation submitted to the 

Graduate School-New Brunswick

Rutgers, The State University of New Jersey

in partial fulfillment of the requirements

for the degree of 

Doctor of Philosophy

Graduate Program in Mathematics

Written under the direction of

J\'anos Pach and Mario Szegedy

and approved by

\_\_\_\_\_\_\_\_\_\_\_\_\_\_\_\_\_\_\_\_\_\_\_\_\_\_\_\_\_\_\_\_\_\_\_\_\_\_\_\_\_\_\_

\_\_\_\_\_\_\_\_\_\_\_\_\_\_\_\_\_\_\_\_\_\_\_\_\_\_\_\_\_\_\_\_\_\_\_\_\_\_\_\_\_\_\_

\_\_\_\_\_\_\_\_\_\_\_\_\_\_\_\_\_\_\_\_\_\_\_\_\_\_\_\_\_\_\_\_\_\_\_\_\_\_\_\_\_\_\_

\_\_\_\_\_\_\_\_\_\_\_\_\_\_\_\_\_\_\_\_\_\_\_\_\_\_\_\_\_\_\_\_\_\_\_\_\_\_\_\_\_\_\_

%\_\_\_\_\_\_\_\_\_\_\_\_\_\_\_\_\_\_\_\_\_\_\_\_\_\_\_\_\_\_\_\_
%\vspace{3cm}

New Brunswick, New Jersey

October, 2011

}
\end{doublespacing}
%\newpage
%\thispagestyle{empty}
\cleardoublepage
%\clearpage
\newpage
\thispagestyle{empty}
{\centering
\vspace{3cm}

To

Amma and Nanna

}
\newpage
\thispagestyle{empty}
\newpage
\pagenumbering{roman}
\pagestyle{plain}
\begin{doublespacing}
{\centering
OBSTACLES, SLOPES AND TIC-TAC-TOE: AN EXCURSION IN DISCRETE GEOMETRY AND COMBINATORIAL GAME THEORY

By

V S PADMINI MUKKAMALA

Dissertation Director: J\'anos Pach and Mario Szegedy

ABSTRACT

}
\end{doublespacing}

\medskip

A drawing of a graph is said to be a {\em straight-line drawing} if the vertices of $G$ are represented by distinct points in the plane and every edge is represented by a straight-line segment connecting the corresponding pair of vertices and not passing through any other vertex of $G$. The minimum number of slopes in a straight-line drawing of $G$ is called the slope number of $G$. 
We show that every cubic graph can be drawn in the plane with straight-line edges using only the four basic slopes $\{0,\pi/4,\pi/2,-\pi/4\}$. We also prove that four slopes have this property if and only if we can draw $K_4$ with them.

Given a graph $G$, an {\em obstacle representation} of $G$ is a
set of points in the plane representing the vertices of $G$,
together with a set of obstacles (connected polygons) such that two vertices
of $G$ are joined by an edge if and only if the corresponding
points can be connected by a segment which avoids all obstacles.
The {\em obstacle number} of $G$ is the minimum number of
obstacles in an obstacle representation of $G$. We show that
there are graphs on $n$ vertices with obstacle number 
$\Omega({n}/{\log n})$.

We show that there is an $m=2n+o(n)$, such that, in the Maker-Breaker game played on $\Z^d$ where Maker needs to put at least $m$ of his marks consecutively in one of $n$ given winning directions, Breaker can force a draw using a pairing strategy.
This improves the result of Kruczek and Sundberg who showed that such a pairing strategy exits if $m\ge 3n$. %Maker needs to put $3n$ of his marks consecutively.
A simple argument shows that $m$ has to be at least $2n+1$ if Breaker is only allowed to use a pairing strategy, thus the main term of our bound is optimal.

%\begin{abstract}
%\end{abstract}

\newpage

\begin{acknowledgements}
%{\centering

%ACKNOWLEDGMENTS

%}
%\linebreak

I would like to thank my parents for always being a pillar of strength for me and supporting me through everything in life and being patient with me despite all my pitfalls. Any acknowledgment would fall short of conveying my gratefulness for having their guidance.

I would also like to thank my brother who has always been the one in whose footsteps I have walked. From school to IIT to Rutgers, he always guided and shaped every vital decision I made. I thank him for always being there.

I would like to thank Rado\v s Radoi\v ci\'c for initiating me into discrete geometry, Mario Szegedy for his valuable guidance as an advisor, and, Jeff Kahn, J\'ozsef Beck, Michel Saks, William Steiger, Doron Zeilberger, Van Vu for teaching me combinatorics and discrete geometry. 

I would also like to thank J\'anos Pach for his patience and guidance as a mentor despite the difficulties I presented as a student, for always striving to encourage me into mathematical research with new problems, for trying to teach me to write papers, and above all, for bringing me to Switzerland where I met my husband.

I would also like to thank all my friends at Rutgers, CUNY and EPFL for making
my stay at all places most enjoyable. 

I would like to thank Linda Asaro at the International Student Center, Rutgers, for being the most helpful advisor and for her punctuality with every request.  

I would also like to thank Pat Barr, Maureen Clausen, Lynn Braun, Demetria Carpenter at Administration, Mathematics Department, for always being helpful.

Lastly, I would like to thank my husband, without whom this PhD would not have been possible and meeting whom would not have been possible without the PhD. His support, guidance and encouragement have shaped not just this PhD, but more aspects of me than I can describe. I thank him for being him and for being there for me.

\end{acknowledgements}

\newpage

%{\centering
\tableofcontents
%TABLE OF CONTENTS

%}

\newpage

%{\centering
%LIST OF TABLES

%}

%\newpage

%{\centering
%LIST OF ILLUSTRATIONS

%}

\addcontentsline{toc}{chapter}{List of Figures}
\listoffigures

\newpage
\pagenumbering{arabic}
\pagestyle{plain}
%\chapter{Introduction}
\chapter*{Introduction}
  \addcontentsline{toc}{chapter}{Introduction}

%slope intro

The field of Graph Theory is said to have first come to light 
with Euler's K\"onigsberg bridge problem in 1736. 
Since then, it has seen much development and also boasts of being a 
subfield of combinatorics that sees intense application. % to Computer Science.
In the beginning, graph theory solely comprised of treating a graph as
an abstract combinatorial object. It could even suffice to call it a 
set system with some predefined properties. This outlook was in itself
sufficient to devise and capture some remarkably elegant problems (e.g. 
traveling salesman, vertex and edge coloring, extremal graph theory). 
Besides the large number
of areas it can be applied to (Computer Science, Operations Research,
Game Theory, Decision Theory), %Apart from its applications to 
%computer science, 
some independent and naturally intriguing problems
of combinatorial nature were studied in graph theory. 
%We thus saw extremal graph 
%theory and structural graph theory emerge. 
%To the unfortunate opinion 
%of many, now these fields show little scope of simple elegant theorems,
%as almost every aspect of these fields has been extensively researched.
Around the same time however, a new field in graph theory arose. 
It can be said that, because of the simplicity of representing so many 
things as graphs, the natural question of the simplicity of representing
graphs themselves, on paper or otherwise, came up. 
Thus started the yet nascent field of graph
drawing, where now the concern was mostly of representing a graph in the plane
and in particular, how simply can it be represented.

This idea in itself had many far reaching applications. Among the first of 
it was the four color theorem, where the simplicity of drawing 
maps was the concern. Since then, many more questions have arisen. An important
one of these was drawing graphs in the plane without crossings, or in other words,
estimating the crossing number of graphs. Although initially the idea of edges was restricted to being
topological curves, before long, a natural further restriction 
was introduced.
%Since then, there have been many beautiful results about the
%number of crossings in a graph, but for a long time, drawings of graphs
%were still thought of as edges being represented by topological curves.
To discretize the problem further and to add to the aspect of naturally 
representing graphs, the branch of straight-line drawings of graphs started.
With straight-line drawings, besides the old questions like crossing 
number etc., some new, purely geometrical concerns arose. For example, one 
way of simplifying a drawing of a graph could be to try to reduce the number
of slopes used in the drawing. This led to the general notion 
(introduced by Wade and Chu) of the slope number, which for a graph is
the minimum number of slopes required to draw it.

The slope number of graphs is at least half their maximum degree. This lead to
the intuitive belief that bounded degree graphs might allow for small slope
numbers. This was shown to the contrary, with a counting argument, that 
even graphs with maximum degree at most five need not have a bounded slope
number. Graphs with maximum degree two can trivially be shown to require 
at most three slopes. This restricts our attention to graphs with maximum
degree three and four. Maximum degree four, still, to the best of our knowledge,
remains an exciting open problem. For maximum degree three although, 
using some previous results, we can provide an exact answer. We show that 
four slopes, even the four fixed slopes of North, East, Northeast, Northwest
are sufficient to draw all graphs with maximum degree three. Since $K_4$
requires at least four slopes in the plane, this indeed is an exact answer.

\medskip

Another interesting notion about straight-line graphs, that arises in 
many natural contexts is that of representing it as a visibility graph.
Given a set of points and a set of polygons (obstacles) in the plane,
a visibility graph's edges comprise of exactly all mutually visible
vertex pairs. Visibility graphs have numerous applications in Computer
Science (Vision, Graphics, Robot motion planning).
%Visibility graphs afford a natural way to think of the
%representation of graphs on the plane. 
A natural question that arises from considering the simplicity of
such a representation is 
to find the smallest number of obstacles one has to
use in the plane to represent a graph. This is defined as the obstacle
number of the graph. 
%Many restrictions of this problem have been well 
%researched. For instance, when the graph is the empty graph and the obstacles
%we are permitted to use are only points, then it was shown that every graph 
%requires PUT BOUND HERE obstacles. Circlular interval graphs were shown to 
%require only one convex obstacle. Outerplanar graphs were shown to require 
%one concave obstacle, and later to require only five convex obstacles. For 
%general bound on obstacle number of graphs, 
It was shown that there are graphs
on $n$ vertices that require $\Omega(\sqrt{\log n})$ obstacles. 
We improve this to show that there are graphs which require 
$\Omega(\frac{n}{\log n})$ obstacles, which can be further improved 
for nicer obstacles. In particular, we show that there are graphs
which require $\Omega(\frac{n^2}{\log n})$ segment obstacles. 

%We also give further characterizations of obstacle number. The
%size of the largest homogeneous set (clique or independent set) in a graph
%with obstacle number $h$ is at least $\frac{1}{2} n^{\frac{1}{h+1}}$. 
%We also show that, for every $h$, there 
%is a graph with obstacle number $h$ and that the convex obstacle number
%of every graph in $4$ dimensions is $1$.

\medskip

The final part of the thesis deals with 
positional games. Many 
combinatorial games (Tic-Tac-Toe, hex, Shannon switching game) 
can be thought of as played on a
hypergraph in which a point is claimed by one of the two players at every turn. 
Winning in such a game is characterized by the capture of a ``winning set''
by a player. All the winning sets form the edges in our hypergraph. 
Such games are called positional games.
%We say that a player has a {\em  winning strategy} if no matter how the other player plays, she/he always wins, with {\em drawing strategy} analogously defined.
%A folklore strategy stealing argument shows that the second player cannot have a winning strategy, so the best that he can hope for is a draw. 
If the 
second player wins if there is a draw, 
then the game is called a {\em Maker-Breaker} game, and the players are called
respectively, {\em Maker} and
{\em Breaker}. We may also note that if the Breaker can find a pairing 
of the vertices such that every winning set contains a pair, then he can
achieve a draw, called a {\em pairing strategy draw}. 

The classical Tic-Tac-Toe game can be generalized to the hypergraph $\Z^d$
with winning sets as consecutive $m$ points in $n$ given directions.
For example, in the Five-in-a-Row game $d=2$, $m=5$ and $n=4$, the winning directions are the vertical, the horizontal and the two diagonals with slope $1$ and $-1$.
It was shown by Hales and Jewett, that for the four above given directions of the two dimensional grid and $m=9$ the second player can achieve a pairing strategy draw. In the general version, it was shown by Kruczek and Sundberg that the second player has a pairing strategy if $m\ge 3n$ for any $d$. 
They conjectured that there is always a pairing strategy for $m\ge 2n+1$, generalizing the result of Hales and Jewett.
We show that their conjecture is asymptotically true, i.e. 
for $m=2n+o(n)$.
In fact we prove the stronger result where $m-1 = p \ge 2n+1$, $p$ a prime. 
This is indeed stronger because there is a prime between $n$ and $n+o(n)$.

\part{Combinatorial Geometry}
\newpage
\chapter{Slope number}

\section{Introduction}\label{slope_intro}
A {\em straight-line drawing} of a graph, $G$, in the plane is obtained if the vertices of $G$ are represented by distinct points in the plane and every edge is represented by a straight-line segment connecting the corresponding pair of vertices and not passing through any other vertex of $G$.
If it leads to no confusion, in notation and terminology we make no distinction between a vertex and the corresponding point, and between an edge and the corresponding segment.
The {\em slope} of an edge in a straight-line drawing is the slope of the corresponding segment.
Wade and Chu \cite{wc94} defined the {\em slope number}, $sl(G)$, of a graph $G$ as the smallest number $s$ with the property that $G$ has a straight-line drawing with edges
of at most $s$ distinct slopes.

%\begin{remark}
Our terminology is somewhat unorthodox: by the {\em slope} of a
line $\ell$, we mean the angle $\alpha$ modulo $\pi$ such that a
counterclockwise rotation through $\alpha$ takes the $x$-axis to a
position parallel to $\ell$. The slope of an edge (segment) is the
slope of the line containing it. In particular, the slopes of the
lines $y=x$ and $y=-x$ are $\pi/4$ and $-\pi/4$, and they are
called {\em Northeast} (or Southwest) and {\em Northwest} (or
Southeast) lines, respectively.
Directions are often
abbreviated by their first letters: N, NE, E, SE, etc. These four
directions are referred to as {\em basic}. That is, a line $\ell$
is said to be of one of the four basic directions if $\ell$ is
parallel to one of the axes or to one of the NE and NW lines $y=x$
and $y=-x$.
%\end{remark}

Obviously, if $G$ has a vertex of degree $d$, then its slope number is at least
$\lceil d/2\rceil$. Dujmovi\'c et al.~\cite{dsw04} asked if the slope number of a graph with bounded maximum degree $d$ could be arbitrarily large. Pach and P\'alv\"olgyi \cite{pp06} and Bar\'at, Matou\v sek, Wood \cite{bmw06} (independently) showed with a counting argument that the answer is yes for $d\ge 5$.

In \cite{kppt08_2}, it was shown that cubic ($3$-regular) graphs could be drawn with five slopes. The major result from which this was concluded was that subcubic graphs\footnote{A graph is subcubic if it is a proper subgraph of a cubic graph, i.e. the degree of every vertex is at most three and it is not cubic (not $3$-regular).} can be drawn with the four basic slopes. 
We note here that the proof of this was slightly incorrect. We give below
a stronger version of that theorem, in which the shortcomings of the incorrect
proof can be overcome. Before the statement of the theorem, we clarify
some terminology used in it.
%We give a precise statement of this theorem here as it will be used several times in this chapter, but before we clarify some terminology used in it.

For any two points $p_1=(x_1,y_1), p_2=(x_2,y_2)\in {\bf R}^2$, we
say that $p_2$ is {\em to the North} (or {\em to the South} of $p_1$ if
$x_2=x_1$ and $y_2>y_1$ (or $y_2<y_1$). Analogously, we say that
$p_2$ is to the Northeast (to the Northwest) of $p_1$ if $y_2>y_1$
and $p_1p_2$ is a Northeast (Northwest) line. 

\begin{theorem}\label{slopenum2} Let $G$ be a graph without components that are cycles
and whose every vertex has degree at most three.
Suppose that $G$ has at least one vertex of degree less than
three, and denote by $v_1, ..., v_m$ the vertices of degree at most two $\;(m\ge 1)$.

Then, for any sequence $x_1, x_2, \ldots , x_n$ of real numbers,
linearly independent over the rationals, $G$ has a straight-line
drawing with the following properties:

\noindent(1) {\em Vertex $v_i$ is mapped into a point with
$x$-coordinate $x(v_i)=x_i\; (1\le i\le m)$;}

\noindent(2) {\em The slope of every edge is $0, \pi/2, \pi/4,$ or
$-\pi/4.$}

\noindent(3) {\em No vertex is to the North of any vertex of
degree {\it two}.}

\noindent(4) {\em No vertex is to the North or to the Northwest of
any vertex of degree {\it one}.}

\noindent(5) {\em The $x$-coordinates of all the vertices are a linear combination with rational coefficients of $x_1,\ldots,x_n$.}

\end{theorem}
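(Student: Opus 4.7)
The plan is to induct on $|V(G)|$, removing one low-degree vertex at a time and extending the drawing. For the base case one handles very small graphs (a single edge, a short path, $K_4$ minus an edge) directly. For the inductive step, choose a distinguished low-degree vertex---say $v_1$---and form $G' := G \setminus \{v_1\}$. Its neighbors in $G'$ have reduced degree, so $G'$ acquires new low-degree vertices (those former neighbors of $v_1$ that had degree three in $G$). The strategy is to apply the inductive hypothesis to $G'$ with an appropriately enlarged list of low-degree vertices, then insert $v_1$ into the resulting drawing.

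Before invoking induction, one must (i) ensure $G'$ still satisfies the hypotheses---in particular, that removing $v_1$ does not create a cycle component (if it does, excise an additional edge to break it and patch the gap back in at the end), and that $G'$ has at least one vertex of degree less than three (which is automatic, since $v_1$'s neighbors have dropped in degree); and (ii) assign $x$-coordinates to the new low-degree vertices. For each new low-degree vertex $w$ that is a neighbor of $v_1$, set $x(w) = x_1 + q_w$ where $q_w \in \mathbb{Q}$ is chosen so that the future edge $v_1 w$ will carry one of the four basic slopes ($q_w = 0$ for vertical, $q_w = \pm h$ for the two diagonals, and for a horizontal edge $v_1$ is placed at $w$'s height). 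The linear independence of $x_1,\ldots,x_n$ over $\mathbb{Q}$ guarantees that the enlarged list of assigned coordinates---a subset of the $\mathbb{Q}$-span of $x_1,\ldots,x_n$---is again linearly independent over $\mathbb{Q}$, since any rational relation among them would descend to one on the $x_i$'s themselves.

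After applying induction to $G'$ one obtains a drawing satisfying (1)--(5). Insert $v_1$ at $x$-coordinate $x_1$ at a $y$-coordinate larger than that of every other vertex. Conditions (3) and (4), applied to the neighbors of $v_1$ in $G'$ (which are degree-one or degree-two vertices there), guarantee that no vertex sits directly north or northwest of those neighbors, so the inserted edges---each already having one of the four basic slopes by construction---pass through no other vertex. Condition (5) for the extended drawing follows because $x_1$ itself is in the rational span. Conditions (3) and (4) for $v_1$ hold trivially because $v_1$ is placed higher than all other vertices.

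The principal obstacle will be the case analysis for choosing $v_1$ and the slopes of its incident edges, especially when $v_1$ has degree two. The two outgoing edges must carry two \emph{different} basic slopes pointing strictly downward or horizontally (to keep $v_1$ at the top), and one must verify that this choice does not place a neighbor directly below a preexisting degree-two vertex of $G'$---thereby violating (3) for that vertex---nor align $v_1$ vertically above a degree-two vertex of $G'$. The linear-independence hypothesis is precisely what prevents such accidental alignments; nonetheless, a careful enumeration of local configurations around $v_1$ and the careful handling of the exceptional case in which $G'$ turns out to be cycle-only (requiring the additional edge-excision mentioned above) are the main sources of technical work.
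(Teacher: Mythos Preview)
Your one-vertex-at-a-time induction runs into a genuine obstruction in the degree-two case that is not merely ``technical work.'' Suppose $v_1$ has degree two with neighbours $u,w$, both of degree three in $G$. After deleting $v_1$ you assign $x(u)$ and $x(w)$ and draw $G'$ inductively; then you must place $v_1$ at $x$-coordinate $x_1$ joined to $u$ and to $w$ by basic-slope edges. This gives you \emph{one} free parameter, $y(v_1)$, but \emph{two} constraints. If $x(u)=x_1$ (vertical edge to $u$) then the edge to $w$ must be diagonal, forcing $y(v_1)=y(w)\pm|x_1-x(w)|$; there is no reason this value exceeds all other $y$-coordinates in $G'$, nor that the segment $v_1w$ avoids other vertices. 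Your suggestion to set $q_w=\pm h$ ``for the diagonals'' is circular: $h$ is the height difference, which is only determined \emph{after} $G'$ is drawn, yet $q_w$ must be fixed \emph{before} you invoke induction. This is exactly why the paper never removes a single degree-two vertex; in the main case it removes an entire shortest cycle $C$ through such a vertex and reinserts its $k$ vertices one at a time along rays from $G'$, so that each placement has one degree of freedom matched to one constraint (the Embedding Procedure in Case~5).

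Two further points. First, your linear-independence claim is not right as stated: $x_1+q_w$ with $q_w\in\mathbb{Q}\setminus\{0\}$ is not in the $\mathbb{Q}$-span of $x_1,\ldots,x_n$ but in the span of $1,x_1,\ldots,x_n$, and two such values $x_1+q_u,\,x_1+q_w$ need not be independent over~$\mathbb{Q}$ together with the remaining $x_j$ (take $x_1=1$). Second, the parenthetical ``if removing $v_1$ creates a cycle component, excise an additional edge and patch later'' hides the main difficulty of the whole theorem: cycles cannot satisfy the inductive hypothesis (Lemma~\ref{slopenumlem21} shows their $x$-coordinates are forced to be rationally dependent), so handling them requires the separate Lemma~\ref{slopenumlem22} and a delicate reattachment, which is what drives the paper's lengthy case analysis (Cases~1--5). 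Your sketch does not supply a mechanism for any of this.
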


Therefore, cubic graphs require one additional slope and hence, five slopes. We improve this as following.

\begin{theorem}\label{thm_4slopes}
Every connected cubic graph has a straight-line drawing with only four slopes.
\end{theorem}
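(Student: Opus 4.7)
The plan is to bootstrap Theorem~\ref{slopenum2} to the cubic case by deleting a single edge, applying the subcubic theorem to the remainder, and then tuning the free parameters so that the deleted edge can be reinserted with one of the four basic slopes.

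Let $G$ be a connected cubic graph and fix any edge $e=uv$. The graph $G-e$ is subcubic and has exactly two vertices of degree less than three, namely $u$ and $v$, both now of degree two; in particular, no component of $G-e$ is a cycle, since any cycle component would consist entirely of degree-two vertices, of which there are only two. Hence Theorem~\ref{slopenum2} applies. I would invoke it on $G-e$ with $v_1=u$, $v_2=v$ and free parameters $x_1=x(u)$, $x_2=x(v)$, introducing auxiliary $x_3,\dots,x_n$ if the construction requires them. By property~(5), and, by inspection of the proof, an analogous statement for the $y$-coordinates, every vertex coordinate is a $\mathbb{Q}$-linear combination of $x_1,\dots,x_n$.

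For the reinserted edge $uv$ to take a basic slope, one of the following three rational linear identities in the parameters must hold:
\[
y(u)-y(v)=0,\qquad y(u)-y(v)=x_1-x_2,\qquad y(u)-y(v)=x_2-x_1
\]
(the vertical slope is ruled out by any $\mathbb{Q}$-independent choice of parameters). Each identity defines a rational hyperplane in parameter space; if one of them holds trivially, the edge already has that slope and we are done, so we may assume each is a proper hyperplane. I would then restrict $(x_1,\dots,x_n)$ to one such ``good'' hyperplane, which forces the parameters to be rationally dependent and thus violates the literal hypothesis of Theorem~\ref{slopenum2}.

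The main obstacle is precisely this loss of rational independence. The claim I would argue is that the proof of Theorem~\ref{slopenum2} uses full rational independence only to avoid finitely many ``bad'' events---two vertices coinciding, an edge acquiring a non-basic slope, or an edge passing through a third vertex---each of which is itself a proper rational hyperplane in parameter space. After intersecting the chosen good hyperplane with the complement of the bad ones (augmented by the conditions needed to ensure $uv$ passes through no third vertex; property~(3) already rules out a natural family of obstructions, since no vertex lies directly North of $u$ or $v$), a dense open set of valid parameter choices remains, and any of them yields the desired four-slope straight-line drawing of $G$. The hardest part will be this genericity step: opening the inductive construction behind Theorem~\ref{slopenum2} to verify that the chosen good hyperplane is not contained in any of the bad hyperplanes, so that a valid parameter actually exists.
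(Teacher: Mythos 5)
Your plan founders at the step you yourself flag as the hardest, and the difficulty there is not a technicality. First, the premise that $y(u)-y(v)$ is a rational linear function of the parameters $x_1,\dots,x_n$ is false for the construction behind Theorem \ref{slopenum2}: the induction repeatedly uses vertical edges of unconstrained length and free vertical translations (``place $v_k$ very far above'', the parameter $L$ in the Embedding Procedure, placing components or sub-drawings ``sufficiently high above'' each other, choosing the $x$-axis above all intersection points of the rays), so the $y$-coordinates of $u$ and $v$ are simply not functions of $x_1,\dots,x_n$, and your ``good hyperplane'' is not well defined. Second, even after fixing those free choices, imposing a rational dependence such as $y(u)-y(v)=x_1-x_2$ collides with the way the proof of Theorem \ref{slopenum2} avoids degeneracies: non-degeneracy is derived (via Lemma \ref{slopenumlem21}) precisely from the assumption that the prescribed coordinates are rationally independent, the case branches of the induction depend on sign comparisons of the parameters, and the ``bad'' events are not a fixed finite list of hyperplanes known in advance --- some of them may coincide with the constraint you impose. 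Finally, note that the only directional guarantee the theorem gives at a degree-two vertex is that nothing lies to its North, which helps only for a \emph{vertical} reinsertion; but a vertical $uv$ is impossible in your setup, since the hypotheses force $x(u)=x_1\neq x_2=x(v)$. So the one slope the theorem supports is the one you cannot use, and for slopes $0,\pm\pi/4$ you have no mechanism to keep the segment $uv$ off the other vertices.

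The paper's actual proof of Theorem \ref{thm_4slopes} is entirely different and sidesteps this control problem: after reducing to bridgeless, triangle-free graphs (Claims \ref{edge_conn}, \ref{vert_conn}, \ref{triangle_free}), it takes a perfect matching (Petersen's theorem), draws all matching edges vertically, draws each cycle of the complement ``growing upward'' with slopes $\pi/4$ and $3\pi/4$ except for one distinguished edge per cycle, and then equalizes the slopes of the distinguished edges by solving a nonlinear system via Brouwer's fixed point theorem (Theorem \ref{generalt}); the common slope obtained this way is the fourth, non-basic slope. Tellingly, when the paper later wants the four \emph{basic} slopes (Theorem \ref{thm_4basic_slopes}), it performs essentially your reinsertion trick, but only for vertical edges across a suitable $M$-cut into \emph{two} components (prescribing $x_i$ in one and $-x_i$ in the other and rotating one component by $\pi$, Lemma \ref{cubicdr}), because vertical reconnection is the only one Theorem \ref{slopenum2} supports --- and even then it needs the supercycle/Moore-bound machinery, Engelstein's theorem, and case analysis for graphs on at most $16$ vertices. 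If deleting a single edge and a genericity argument sufficed, it would yield a far shorter proof of that stronger theorem; the control over $y(u)-y(v)$ that your sketch postulates is exactly what is missing.
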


The above theorem gives a drawing of connected cubic graphs with four slopes, 
one of which is not a basic slope. Further, for disconnected cubic graphs, 
we require $5$ slopes. 
We show a reduction of cubic graphs with triangles (Lemma \ref{triangle_free}) 
because of which, instead of the above theorem, our focus will be to prove the following. 

\begin{theorem}\label{subthm_4slopes}
Every triangle-free connected cubic graph has a straight-line drawing with only four slopes.
\end{theorem}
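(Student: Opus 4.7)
The plan is to reduce Theorem~\ref{subthm_4slopes} to Theorem~\ref{slopenum2} by removing a single, well-chosen edge from $G$. Let $G$ be a triangle-free connected cubic graph on $n \ge 4$ vertices, and pick any edge $e = uv$. Set $G' = G - e$; in $G'$ both $u$ and $v$ have degree two, every other vertex still has degree three, and $G'$ is connected, subcubic, and not a cycle (since it has $3n/2 - 1$ edges). Hence Theorem~\ref{slopenum2} applies to $G'$, with $u$ and $v$ among the degree-at-most-two vertices $v_1, \ldots, v_m$.

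I would apply Theorem~\ref{slopenum2} to $G'$ with $u$ and $v$ placed first, so $x_u = x_1$ and $x_v = x_2$ come from a sequence $x_1, \ldots, x_m$ of reals that is linearly independent over $\mathbb{Q}$. The resulting straight-line drawing uses the four basic slopes, and by property~(3) no vertex lies directly to the North of $u$ or of $v$. Consequently the missing edge $uv$ can be added as a single straight segment that avoids every other vertex (after, if necessary, first extending $u$ and $v$ upward along vertical half-lines into the empty region above them). Its slope $s = (y_v - y_u)/(x_v - x_u)$ is, by property~(5) and the $\mathbb{Q}$-linear independence of the $x_i$, a ratio of nonzero $\mathbb{Q}$-linear combinations of the $x_i$, and hence is generically different from $0$, $\pi/4$, $\pi/2$, and $-\pi/4$. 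At this point we have a straight-line drawing of $G$ using at most five distinct slopes: the four basic ones plus $s$.

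The remaining task, and the main obstacle, is to cut this count from five down to four, consistent with the remark that the four slopes produced for a connected cubic graph include exactly one non-basic slope. The strategy is to refine the drawing of $G'$ so that one fixed basic slope, say the horizontal slope $0$, is never used on any edge of $G'$; then $s$ can take its place, yielding three basic slopes plus the non-basic slope $s$, for four slopes in total. To achieve this, I would reopen the constructive proof of Theorem~\ref{slopenum2} and show that for the specific subcubic graph $G' = G - e$ coming from a triangle-free cubic $G$, the inductive case analysis can always be steered away from producing a horizontal edge. Triangle-freeness is essential here: it is precisely triangles of $G$ that force local three-edge configurations whose straight-line drawing with edges among $\{0,\pi/4,\pi/2,-\pi/4\}$ genuinely requires all four of the basic directions, which is why triangles must be removed separately via Lemma~\ref{triangle_free} before invoking the argument above.

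I expect the most delicate point to be exactly this refinement. One must certify that the freedom in Theorem~\ref{slopenum2}'s algorithm, together with a careful choice of the deleted edge $e$ (e.g. selecting $e$ so that $u$ and $v$ are incident to short cycles of $G$ whose drawing can be rotated away from the horizontal), suffices to avoid one prescribed basic slope throughout $G'$ while still validating properties~(1)--(5), which are what allow the edge $uv$ to be reinserted with its fourth slope $s$ without creating crossings through other vertices.
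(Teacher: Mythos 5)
Your argument has a genuine gap at its decisive step. The easy part (delete one edge $e=uv$, draw $G'=G-e$ by Theorem~\ref{slopenum2}, reinsert $uv$ with a fifth slope) only reproduces the known five-slope bound, and even there it is shakier than you suggest: property~(3) controls the North direction only, so it does not by itself guarantee that the non-vertical segment $uv$ misses the other vertices, and the parenthetical fix of ``extending $u$ and $v$ upward along vertical half-lines'' is not available in a straight-line drawing (you may neither bend the edge nor move $u,v$ without destroying the slopes of their other incident edges). The real problem, however, is the reduction from five slopes to four. You need the drawing of $G'$ to avoid one prescribed basic slope entirely, i.e.\ a three-slope drawing of $G-e$ still satisfying enough of (1)--(5) to reinsert $uv$; this is asserted as a ``refinement'' of the induction behind Theorem~\ref{slopenum2} but never proved, and nothing in that proof supports it. The construction there uses all four basic directions essentially: horizontal edges are produced systematically (in Lemma~\ref{slopenumlem22}, in Case~1, in the Embedding Procedure), and the North/Northwest directions are reserved by invariants (3)--(4) that drive the induction, so one cannot simply ``steer'' the case analysis away from a chosen slope. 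Your heuristic that only triangles force the fourth basic direction is also unsupported, and it is not the role triangle-freeness actually plays. As it stands, the proposal proves the five-slope statement (modulo the vertex-on-segment issue) and leaves the actual content of the theorem open.

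For comparison, the paper's proof takes a completely different route. After reducing to bridgeless graphs (Claims~\ref{edge_conn} and \ref{vert_conn}), it takes a perfect matching guaranteed by Petersen's theorem and draws all matching edges vertically on distinct vertical lines; the complement of the matching is a union of cycles, each drawn as an upward-growing path using slopes $\pi/4$ and $3\pi/4$, with one distinguished closing edge per cycle. The slopes of these closing edges are rational functions $g_i(\x)=\LL_i(\x)/x_i$ of the gaps between consecutive vertical lines, and the Solvability Theorem~\ref{generalt} (proved with Brouwer's fixed point theorem) produces positive gaps making all $g_i$ equal, so every closing edge gets one and the same fourth (non-basic) slope. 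Triangle-freeness is used there to guarantee that each cycle has length at least four, which is what makes the combinatorial choice of distinguished edges (Conditions~I and II, needed both for adjacency of the distinguished vertical lines and for the reachability hypothesis of Theorem~\ref{generalt}) possible -- not to rule out some local configuration needing all four basic slopes. If you want to salvage your approach, you would have to prove the three-slope drawability of $G-e$ with the required invariants, which is a substantially harder statement than the theorem itself appears to need.
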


%This was improved in \cite{ms} to show that connected cubic graphs can be drawn
%with four slopes\footnote{But not the four basic slopes.} while disconnected cubic graphs required five slopes.

We note that the four slopes used above are not the four basic slopes. 
Towards this, it
was shown by Max Engelstein \cite{eng} that $3$-connected cubic graphs with a Hamiltonian cycle can be drawn with the four basic slopes.

We later improve all these results by the following

\begin{theorem}\label{thm_4basic_slopes}
Every cubic graph has a straight-line drawing with only the four basic slopes.
\end{theorem}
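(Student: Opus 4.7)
The plan is to deduce Theorem \ref{thm_4basic_slopes} from Theorem \ref{slopenum2} by deleting a single vertex of $G$, drawing the resulting subcubic graph with the four basic slopes, and reinserting that vertex so that all three of its incident edges again receive basic slopes. Two standard reductions apply first. Disconnected cubic graphs are handled componentwise, placing the drawings of the pieces far apart. Cubic graphs containing a triangle are handled inductively, by contracting a triangle to a single vertex to obtain a smaller cubic graph $\tilde G$, drawing $\tilde G$ with basic slopes by the inductive hypothesis, and locally blowing up the contracted vertex into a small triangle using three of the basic slopes, in the same spirit as Lemma \ref{triangle_free}. So I may assume $G$ is a connected, triangle-free cubic graph.

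Pick any vertex $w \in V(G)$ with neighbors $a, b, c$, and set $G' = G - w$. Since $G$ is triangle-free and connected, $a, b, c$ are pairwise non-adjacent, have degree two in $G'$, and no component of $G'$ is a cycle; so $G'$ satisfies the hypotheses of Theorem \ref{slopenum2}. Applying that theorem with the enumeration $v_1 = a,\ v_2 = b,\ v_3 = c$ of the degree-$\le 2$ vertices, and with real $x$-coordinates $x_1, \ldots, x_n$ linearly independent over $\mathbb{Q}$, yields a straight-line drawing of $G'$ using only the four basic slopes, with no vertex to the North of any of $a, b, c$, and with every vertex's $x$-coordinate a rational linear combination of $x_1, \ldots, x_n$ (and hence, because slopes are basic, the same is true for $y$-coordinates up to additive height offsets).

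It then remains to place $w$ so that each of the segments $wa,\ wb,\ wc$ has a basic slope and is internally disjoint from the other vertices. The prohibition on vertices lying North of $a, b, c$ forces $w$ to be weakly below all three of them, so the directions available from $w$ toward its neighbors reduce to a subset of $\{\mathrm{N}, \mathrm{NE}, \mathrm{NW}\}$, together with E and W whenever one of $y(a), y(b), y(c)$ happens to coincide with $y(w)$. Once an admissible assignment of basic slopes to the three edges is fixed, $w$ is determined by any two of the three lines, and the third constraint collapses to a single linear equation in the coordinates of $a, b, c$; equivalently, to the vanishing of a specific rational linear combination of $x_1, \ldots, x_n$. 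By rational independence, this happens exactly when the combinatorial coefficients forced by the graph structure identically cancel.

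The main obstacle I anticipate is exactly this compatibility requirement: the rigid rational-affine structure produced by Theorem \ref{slopenum2} leaves very little room to satisfy the one remaining linear equation at $w$. The plan is to attack this by two complementary means. First, exploit the freedom in choosing $w$, in permuting its neighbors among $(v_1, v_2, v_3)$, and in selecting the slope assignment at $w$, aiming to find an assignment for which the required combinatorial identity is automatic. If the statement of Theorem \ref{slopenum2} as given does not afford enough flexibility, the secondary plan is to revisit its proof and expose one additional continuous height parameter that can be tuned independently of $x_1, \ldots, x_n$ while preserving properties (1)--(5); a single real degree of freedom suffices to satisfy one linear equation, and mild general-position perturbations of this parameter then also ensure that the segments $wa, wb, wc$ avoid all other vertices. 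Carrying out this last step cleanly is where I expect the bulk of the technical work to lie.
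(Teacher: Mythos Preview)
Your reduction to connected triangle-free cubic graphs is fine and matches the paper, but the core step---deleting a single vertex $w$ and reinserting it after applying Theorem~\ref{slopenum2}---has a real obstruction, and the paper does \emph{not} proceed this way.

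The difficulty is the one you flag, but it is more serious than a technicality. After drawing $G'=G-w$, the three neighbours $a,b,c$ sit at points whose $x$-coordinates you chose but whose $y$-coordinates are dictated by the recursive construction inside Theorem~\ref{slopenum2}. Placing $w$ gives two degrees of freedom; requiring three basic-slope edges imposes three linear constraints, leaving one equation among the coordinates of $a,b,c$. Concretely, if $wa$ is vertical, then $w=(x_1,y_w)$ and the conditions on $wb,wc$ force
\[
y(c)-y(b)=\epsilon_b(x_1-x_2)-\epsilon_c(x_1-x_3),\qquad \epsilon_b,\epsilon_c\in\{0,\pm 1\}.
\]
Theorem~\ref{slopenum2} gives you no handle on $y(b)-y(c)$: its free parameters are the $x_i$, and the $y$-coordinates are outputs of a complicated recursion. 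Your plan (b), to expose an extra continuous height parameter, does not help either, because $a,b,c$ lie in the \emph{same} drawing; a global vertical shift moves all three together and leaves $y(c)-y(b)$ unchanged, while the internal parameter $L$ in the Embedding Procedure affects relative heights in a way tied to the specific cycle structure, not to your chosen triple. Even if the equation were satisfiable, property~(3) only clears the North ray above each degree-two vertex; it gives no guarantee that a NE, NW or E segment from $b$ or $c$ to $w$ avoids other vertices.

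The paper's route avoids all of this by deleting more than one vertex's worth of edges. It finds a \emph{suitable $M$-cut}: a matching whose removal leaves two components, each a supercycle (hence subcubic, not a cycle). Theorem~\ref{slopenum2} is then applied to each component separately, with the $x$-coordinates of the cut endpoints in one component chosen to be the negatives of those in the other. Rotating one component by $\pi$ and translating it far above the other makes every cut edge vertical, and property~(3) guarantees these vertical segments are unobstructed on both ends. The point is that vertical edges impose constraints only on $x$-coordinates, which Theorem~\ref{slopenum2} lets you prescribe; your single-vertex reinsertion unavoidably entangles $y$-coordinates. The existence of a suitable $M$-cut is established via girth/supercycle bounds for $n\ge 18$; smaller graphs are handled by reducing to the $3$-connected Hamiltonian case (Engelstein) plus a short finite check (Petersen, Tietze).
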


% Figure by Padmini
\begin{figure*}[htp]
{\centering
\subfigure[Petersen graph]{
\begin{tikzpicture}[scale=1]
%\filldraw [blue!80!black!20!white] (0,0) -- (1,1) -- (2.5,1) -- (3.5,0) -- (2.5,-1) -- (1,-1) -- (0,0) -- cycle;
\node [fill=black,circle,inner sep=1pt] (1) at (1,0) {}; 
\node [fill=black,circle,inner sep=1pt] (2) at (0,1) {}; 
\node [fill=black,circle,inner sep=1pt] (3) at (2,1) {}; 
\node [fill=black,circle,inner sep=1pt] (4) at (4,1) {}; 
\node [fill=black,circle,inner sep=1pt] (5) at (3,0) {}; 
\node [fill=black,circle,inner sep=1pt] (6) at (0,2) {}; 
\node [fill=black,circle,inner sep=1pt] (7) at (1,3) {}; 
\node [fill=black,circle,inner sep=1pt] (8) at (2,3) {}; 
\node [fill=black,circle,inner sep=1pt] (9) at (3,3) {}; 
\node [fill=black,circle,inner sep=1pt] (10) at (4,2) {}; 
\draw [black] (1) -- (2) -- (3) -- (4) -- (5) -- (1) -- (7) -- (8) -- (9) -- (10) -- (6) -- (7);
\draw [black] (2) -- (6);
\draw [black] (3) -- (8);
\draw [black] (4) -- (10);
\draw [black] (5) -- (9);
%\draw [->,green] (-0.5,4) arc (145:35:22pt);
\end{tikzpicture}
}
\qquad
\subfigure[$K_{3,3}$]{
\begin{tikzpicture}[scale=1]
\node [fill=black,circle,inner sep=1pt] (1) at (1,0) {}; 
\node [fill=black,circle,inner sep=1pt] (2) at (3,0) {}; 
\node [fill=black,circle,inner sep=1pt] (3) at (4,1) {}; 
\node [fill=black,circle,inner sep=1pt] (4) at (3,2) {}; 
\node [fill=black,circle,inner sep=1pt] (5) at (1,2) {}; 
\node [fill=black,circle,inner sep=1pt] (6) at (0,1) {}; 
\draw [black] (1) -- (2) -- (3) -- (4) -- (5) -- (6) -- (1);
\draw [black] (1) -- (4);
\draw [black] (2) -- (5);
\draw [black] (3) -- (6);
\end{tikzpicture}
}
\caption[The Petersen graph and $K_{3,3}$]{The Petersen graph and $K_{3,3}$ drawn with the four basic slopes.}
} 
\label{fig:petersen}
\end{figure*}
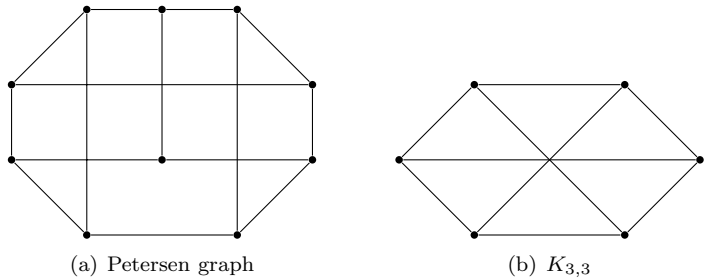

This is the first result about cubic graphs that uses a nice, fixed set of slopes instead of an unpredictable set, possibly containing slopes that are not rational multiples of $\pi$.
Also, since $K_4$ requires at least $4$ slopes, this settles the question of determining the minimum number of slopes required for cubic graphs.

We also prove 

\begin{theorem}\label{karakterizacio}
Call a set of slopes {\em good} if every cubic graph has a straight-line drawing with them. 
Then the following statements are equivalent for a set $S$ of four slopes.
\begin{enumerate}
\item $S$ is good.
\item $S$ is an affine image of the four basic slopes.
\item We can draw $K_4$ with $S$.
\end{enumerate}
\end{theorem}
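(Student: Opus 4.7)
The plan is to prove the cyclic implications $(2) \Rightarrow (1) \Rightarrow (3) \Rightarrow (2)$. The implication $(1) \Rightarrow (3)$ is immediate since $K_4$ is itself a cubic graph. For $(2) \Rightarrow (1)$, let $\varphi$ be an affine map of the plane sending the four basic slopes onto $S$. By Theorem~\ref{thm_4basic_slopes}, every cubic graph $G$ has a straight-line drawing using only the four basic slopes; applying $\varphi$ vertex-wise then produces a straight-line drawing of $G$ whose edges have slopes exactly in $S$, since $\varphi$ sends lines to lines and preserves parallel classes.

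The main content is $(3) \Rightarrow (2)$. Fix a drawing of $K_4$ with slope set $S = \{s_1, s_2, s_3, s_4\}$. I first claim that no two edges incident to a common vertex can share a slope: if $vu$ and $vw$ had the same slope, then $u, v, w$ would be collinear, and whichever of these three lies between the other two, the edge of $K_4$ joining the outer pair would be a straight segment passing through that middle vertex, which is forbidden. Consequently each of the four vertices uses exactly three of the four slopes, for a total of $12$ vertex-slope incidences. Letting $e_i$ denote the number of edges of slope $s_i$, we have $\sum_i e_i = 6$ and $\sum_i 2 e_i = 12$. Edges of the same slope form a matching by the claim, and since $K_4$ has matching number $2$, we have $e_i \le 2$; and since each slope of $S$ actually appears, $e_i \ge 1$. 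The only solution is that two of the $e_i$ equal $2$ and the other two equal $1$.

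The four edges whose slopes are used twice form a $2$-regular spanning subgraph of $K_4$, which must be a $4$-cycle $ABCD$; within this cycle, the two edges sharing each doubled slope form a perfect matching, i.e.\ a pair of opposite sides. Hence $AB \parallel CD$ and $BC \parallel DA$, so $ABCD$ is a nondegenerate parallelogram. The remaining two edges of $K_4$ are its diagonals $AC$ and $BD$, which carry the two singleton slopes of $S$. Any nondegenerate parallelogram is the image of the unit square under some affine map $\psi$, and this $\psi$ sends the sides and diagonals of the unit square (whose slopes are exactly the four basic slopes) to the sides and diagonals of $ABCD$ (whose slopes comprise $S$). Thus $S$ is an affine image of the four basic slopes, closing the cycle.

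The only delicate step is the collinearity argument at the start of $(3) \Rightarrow (2)$, which uses in an essential way that $K_4$ is complete, so that every three of its vertices are mutually joined by edges; for a general cubic graph, three collinear vertices would not force a contradiction. Once this observation is in hand, the rest is the structural decomposition of $K_4$ into a Hamiltonian $4$-cycle together with its two chords, plus the standard fact that the affine group acts transitively on nondegenerate parallelograms.
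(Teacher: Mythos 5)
Your proof is correct and takes essentially the same route as the paper's: affine invariance of incidences together with Theorem~\ref{thm_4basic_slopes} for $(2)\Rightarrow(1)$, the fact that $K_4$ is cubic for $(1)\Rightarrow(3)$, and for $(3)\Rightarrow(2)$ the same chain of observations that incident edges cannot share a slope, that two slopes are each used by two edges forming a $4$-cycle, hence a parallelogram, and that an affine map sending this parallelogram to a square carries $S$ to the four basic slopes. Your multiplicity count is a bit more explicit than the paper's (which only notes that some two slopes are doubled), and your step $e_i\ge 1$ relies on the same implicit reading—that all four slopes of $S$ actually occur in the drawing (equivalently, that $K_4$ cannot be drawn with three slopes, since the three perfect matchings cannot all be parallel classes)—which the paper also uses silently when it concludes that the transformation takes $S$ to the basic slopes.
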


The problem whether the slope number of graphs with maximum degree four is unbounded or not remains an interesting open problem.

%\begin{conjecture} The slope number of graphs with maximum degree $4$ is unbounded.
%\end{conjecture}

There are many other related graph parameters. 
The {\em thickness} of a graph $G$ is defined as the smallest number of planar subgraphs it can be decomposed into \cite{MuOS}. It is one of the several widely known graph parameters that measures how far $G$ is from being planar.
The {\em geometric thickness} of $G$, defined as the smallest number of {\em crossing-free} subgraphs of a straight-line drawing of $G$ whose union is $G$, is another similar notion \cite{Ka}.
It follows directly from the definitions that the thickness of any graph is at most as
large as its geometric thickness, which, in turn, cannot exceed its slope number.
For many interesting results about these parameters, consult \cite{DiEH, dek04, dsw04, DuW, E04, HuSW}.

A variation of the problem arises if (a) two vertices in a drawing have an edge between them if and only if the slope between them belongs to a certain set $S$ and, (b) collinearity of points is allowed. This violates the condition stated before that an edge cannot pass through vertices other than its end points. For instance, $K_n$ can be drawn with one slope. The smallest number of slopes that can be used to represent a graph in such a way is called the {\em slope parameter} of the graph.
Under these set of conditions, \cite{ambaha06} proves that the slope parameter of subcubic outerplanar graphs is at most $3$.
It was shown in \cite{kppt10} that the slope parameter of every cubic graph is at most seven.
If only the four basic slopes are used, then the graphs drawn with the above conditions are called Queen's graphs and \cite{amba06} characterizes certain graphs as Queen's graphs. Graph theoretic properties of some specific Queen's graphs can be found in \cite{bs09}.

Another variation for planar graphs is to demand a planar drawing. The {\em planar slope number} of a planar graph is the smallest number of distinct slopes with the property that the graph has a straight-line drawing with non-crossing edges using only these slopes.
Dujmo\-vi\'c, Eppstein, Suderman, and Wood \cite{dsw07} raised the question whether there exists a function $f$ with the property that the planar slope number of every planar graph with maximum degree $d$ can be bounded from above by $f(d)$.
Jelinek et al.~\cite{JJ10} have shown that the answer is yes for {\em outerplanar} graphs, that is, for planar graphs that can be drawn so that all of their vertices lie on the outer face.
Eventually the question was answered in \cite{kpp10} where it was proved that any bounded degree planar graph has a bounded planar slope number.

Finally we would mention a slightly related problem. Didimo et al.~\cite{Didimo} studied drawings of graphs where edges can only cross each other in a right angle. Such a drawing is called an RAC (right angle crossing) drawing. They showed that every graph has an RAC drawing if every edge is a polygonal line with at most three bends (i.e. it consists of at most four segments). They also gave upper bounds for the maximum number of edges if less bends are allowed. Later Arikushi et al.~\cite{rado} showed that such graphs can have at most $O(n)$ edges. Angelini et al.~\cite{Angelini} proved that every cubic graph admits an RAC drawing with at most one bend. It remained an open problem whether every cubic graph has an RAC drawing with straight-line segments. If besides orthogonal crossings, we also allow two edges to cross at $45^\circ$, then it is a straightforward corollary of Theorem \ref{thm_4basic_slopes} that every cubic graph admits such a drawing with straight-line segments.

%In section $2$ we give the proof of the Theorem \ref{thm_4basic_slopes} while in section $3$ we prove Theorem \ref{karakterizacio} and discuss open problems.

% Figure by Padmini
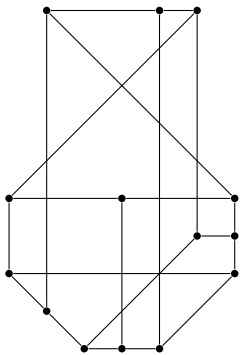
\begin{figure*}[htp]
{\centering
\begin{tikzpicture}[scale=0.5]
%\filldraw [blue!80!black!20!white] (0,0) -- (1,1) -- (2.5,1) -- (3.5,0) -- (2.5,-1) -- (1,-1) -- (0,0) -- cycle;
\node [fill=black,circle,inner sep=1pt] (1) at (0,2) {}; 
\node [fill=black,circle,inner sep=1pt] (2) at (1,1) {}; 
\node [fill=black,circle,inner sep=1pt] (3) at (2,0) {}; 
\node [fill=black,circle,inner sep=1pt] (4) at (3,0) {}; 
\node [fill=black,circle,inner sep=1pt] (5) at (4,0) {}; 
\node [fill=black,circle,inner sep=1pt] (6) at (6,2) {}; 
\node [fill=black,circle,inner sep=1pt] (7) at (6,3) {}; 
\node [fill=black,circle,inner sep=1pt] (8) at (5,3) {}; 
\node [fill=black,circle,inner sep=1pt] (9) at (0,4) {}; 
\node [fill=black,circle,inner sep=1pt] (10) at (3,4) {}; 
\node [fill=black,circle,inner sep=1pt] (11) at (6,4) {}; 
\node [fill=black,circle,inner sep=1pt] (12) at (1,9) {}; 
\node [fill=black,circle,inner sep=1pt] (13) at (4,9) {}; 
\node [fill=black,circle,inner sep=1pt] (14) at (5,9) {}; 
\draw [black] (1) -- (2) -- (3) -- (4) -- (5) -- (6) -- (1);
\draw [black] (3) -- (8) -- (7) -- (6);
\draw [black] (9) -- (10) -- (11) -- (12) -- (13) -- (14) -- (9);
\draw [black] (1) -- (9);
\draw [black] (2) -- (12);
\draw [black] (4) -- (10);
\draw [black] (5) -- (13);
\draw [black] (7) -- (11);
\draw [black] (8) -- (14);
%\draw [->,green] (-0.5,4) arc (145:35:22pt);
\end{tikzpicture}
\caption[The Heawood graph]{The Heawood graph drawn with the four basic slopes.}
} 
\label{fig:heawood}
\end{figure*}

\section{Correct Proof of the Subcubic Theorem}

We would like the reader to note that this is a modification
of the proof as it appears in \cite{kppt08_2}.

Note that it is enough to establish the theorem for connected graphs, because if the different components of $G$ are drawn separately and placed far above each other, then none of the properties will be violated.

\subsection{Embedding Cycles}

Let $C$ be a straight-line drawing of a cycle in the plane. A
vertex $v$ of $C$ is said to be a {\em turning point} if the
slopes of the two edges meeting at $v$ are not the same.

We start with two simple auxiliary statements.

\begin{lemma}\label{slopenumlem21} Let $C$ be a straight-line drawing
of a cycle such that the slope of every edge is $0$, $\pi/4$, or
$-\pi/4$. Then the $x$-coordinates of the vertices of $C$ are {\it
not} independent over the rational numbers.

Moreover, there is a vanishing linear combination of the
$x$-coordinates of the vertices, with %at least
as many nonzero
(rational) coefficients as many turning points $C$ has.
\end{lemma}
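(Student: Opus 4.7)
The plan is to exploit that, among the three allowed slopes, none are vertical, so the $y$-difference along an edge is completely determined by its $x$-difference together with the slope. Writing this as one identity per edge and summing around the closed cycle gives a linear relation among the $x$-coordinates; examining which coefficients survive in the collapse will match exactly the turning points.

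Concretely, I would first assign to each edge $e$ of $C$ a sign $\varepsilon_e \in \{0,+1,-1\}$ according to whether its slope is $0$, $\pi/4$, or $-\pi/4$. Enumerate the vertices as $v_1,\ldots,v_k$ in cyclic order, with edge $e_i$ from $v_i$ to $v_{i+1}$ (indices mod $k$). For each edge we have the exact relation
\[
y_{i+1}-y_i \;=\; \varepsilon_{e_i}\,(x_{i+1}-x_i),
\]
because a horizontal edge has $\varepsilon=0$ and matches $y_{i+1}=y_i$, while the two diagonal slopes give $y_{i+1}-y_i=\pm (x_{i+1}-x_i)$.

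Summing over the cycle, the left-hand side telescopes to $0$, so
\[
\sum_{i=1}^{k} \varepsilon_{e_i}\,(x_{i+1}-x_i) \;=\; 0.
\]
I would then regroup this sum by vertex: each $x_j$ appears from the two edges meeting at $v_j$, contributing the coefficient $\varepsilon_{e_{j-1}}-\varepsilon_{e_j}$. This is the vanishing integer (hence rational) linear combination promised by the lemma. Since the map $\varepsilon \mapsto \text{slope}$ is injective on $\{0,\pi/4,-\pi/4\}$, the coefficient $\varepsilon_{e_{j-1}}-\varepsilon_{e_j}$ vanishes precisely when the two edges at $v_j$ share a slope, i.e., exactly when $v_j$ is not a turning point. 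Thus the number of nonzero coefficients equals the number of turning points of $C$, proving the ``moreover'' clause.

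The only loose end is to argue that this relation is nontrivial, so that the first sentence about rational dependence really follows. For this I would note that a cycle in a straight-line drawing cannot have all edges of the same slope: the vertices would then all be collinear, so any edge spanning at least three consecutive cycle-vertices in that line would pass through an intermediate vertex, violating the definition of straight-line drawing. Hence at least one turning point exists, at least one coefficient is nonzero, and the $x$-coordinates satisfy a nontrivial rational linear relation. I do not anticipate a serious obstacle here; the main point is simply to spot the telescoping trick and then to match ``turning point'' with ``coefficient survives the collapse,'' which follows cleanly from the injectivity of $\varepsilon$.
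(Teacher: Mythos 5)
Your proof is correct and follows essentially the same route as the paper's: the per-edge relation $y_{i+1}-y_i=\varepsilon_{e_i}(x_{i+1}-x_i)$, summing around the cycle, regrouping by vertex so that the coefficient of $x_j$ is the difference of the slope-parameters of its two incident edges (nonzero exactly at turning points), and ruling out triviality by noting that an all-one-slope cycle would force collinear vertices, contradicting the definition of a straight-line drawing.
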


\noindent{\bf Proof.} Let $v_1, v_2,\ldots, v_n$ denote the
vertices of $C$ in cyclic order ($v_{n+1}=v_1$). Let $x(v_i)$ and
$y(v_i)$ be the coordinates of $v_i$. For any $i\; (1\le i\le n)$,
we have
$y(v_{i+1})-y(v_i)=\lambda_i\left(x(v_{i+1})-x(v_i)\right),$ where
$\lambda_i=0,1,$ or $-1$, depending on the slope of the edge
$v_iv_{i+1}$. Adding up these equations for all $i$, the left-hand
sides add up to zero, while the sum of the right-hand sides is a
linear combination of the numbers $x(v_1), x(v_2),\ldots, x(v_n)$
with integer coefficients of absolute value at most {\em two}.

Thus, we are done with the first statement of the lemma, unless
all of these coefficients are zero. Obviously, this could happen
if and only if $\lambda_1=\lambda_2=\ldots=\lambda_n$, which is
impossible, because then all points of $C$ would be collinear,
contradicting our assumption that in a proper {\em straight-line
drawing} no edge is allowed to pass through any vertex other than
its endpoints.

To prove the second statement, it is sufficient to notice that the
coefficient of $x(v_i)$ vanishes if and only if $v_i$ is not a
turning point. \hfill $\Box$
\medskip

Lemma \ref{slopenumlem21} shows that Theorem \ref{slopenum2} does not hold if $G$ is a cycle.
Nevertheless, according to the next claim, cycles satisfy a very
similar condition. Observe, that the main difference is that here 
we have an exceptional vertex, denoted by $v_0$.

\begin{lemma}\label{slopenumlem22} Let $C$ be a cycle with vertices
$v_0, v_1, \ldots , v_m$, in this cyclic order.

Then, for any real numbers $x_1, x_2, \ldots , x_m$, linearly
independent over the rationals, $C$ has a straight-line drawing
with the following properties:

\noindent(1) {\em Vertex $v_i$ is mapped into a point with
$x$-coordinate $x(v_i)=x_i\; (1\le i\le m)$;}

\noindent(2) {\em The slope of every edge is $0, \pi/4,$ or
$-\pi/4.$}

\noindent(3) {\em No vertex is to the North of any other vertex.}

\noindent(4) {\em No vertex has a larger $y$-coordinate than
$y(v_0)$.}

\noindent(5) {\em The $x$-coordinate of $v_0$ is a linear combination with rational coefficients of $x_1,\ldots,x_m$.}
\end{lemma}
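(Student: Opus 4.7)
My plan is to give a direct construction in which the path $v_1, v_2, \ldots, v_m$ descends strictly in $y$-coordinate while $v_0$ is anchored on the horizontal line through $v_1$, acting as the apex.

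By reversing the cyclic order if necessary (reindexing $v_i \mapsto v_{m+1-i}$ for $1 \le i \le m$, which is the same undirected cycle), I may assume $x_1 < x_m$. Set $y(v_1) := 0$, and for $i = 1, \ldots, m-1$ give the edge $v_iv_{i+1}$ the slope $-\mathrm{sign}(x_{i+1}-x_i) \in \{+1,-1\}$; this forces $y(v_{i+1}) = y(v_i) - |x_{i+1}-x_i|$, so the path strictly descends and $y(v_m) = -\sum_{i=1}^{m-1}|x_{i+1}-x_i|$. Now place $v_0 := (x_0, 0)$, give the edge $v_0v_1$ slope $0$, and give the edge $v_mv_0$ slope $\nu := \mathrm{sign}(x_m - x_{m-1})$. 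Solving the slope equation on $v_mv_0$ fixes
\[
x_0 \;=\; x_m + \nu \sum_{i=1}^{m-1}|x_{i+1}-x_i|,
\]
a rational linear combination of $x_1,\ldots,x_m$, giving property (5). Properties (1), (2), and (4) are immediate by construction: each $v_i$ has the prescribed $x$-coordinate, every edge uses a slope from $\{0,\pi/4,-\pi/4\}$, and $y(v_0) = 0 = y(v_1) \ge y(v_i)$ with equality only at $v_1$.

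What remains, and where the main work lies, are condition (3) together with the implicit requirement that no edge pass through a vertex other than its endpoints. Writing $\epsilon_i := \mathrm{sign}(x_{i+1}-x_i) \in \{\pm 1\}$, the coefficient of $x_m$ in the formula for $x_0$ is $1 + \nu\epsilon_{m-1}$, which by our choice $\nu = \epsilon_{m-1}$ equals $2$; so rational independence immediately gives $x_0 \neq x_j$ for every $j$, yielding (3). Suppose $v_j$ lay on the interior of a path edge $v_iv_{i+1}$; substituting the explicit formula $y(v_k) = -\sum_{\ell<k}|x_{\ell+1}-x_\ell|$ into the collinearity equation and using rational independence forces $\epsilon_i = \epsilon_{i+1} = \cdots = \epsilon_{j-1}$, so $x_i, x_{i+1}, \ldots, x_j$ are monotone, and hence $x_j$ lies outside the $x$-interval spanned by $v_i$ and $v_{i+1}$; that is, $v_j$ lies on the line through the edge but not on the segment. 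The analogous computation shows that a vertex in the interior of $v_mv_0$ would force $\epsilon_{m-1} = -\nu$, contradicting our choice of $\nu$; and the horizontal segment $v_0v_1$ lies on $y=0$, which no vertex other than $v_0$ and $v_1$ attains.

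The hardest part will be carrying out this rational-independence bookkeeping cleanly and recognizing that the single choice $\nu = \mathrm{sign}(x_m - x_{m-1})$ is precisely what kills every would-be accidental coincidence on the two edges incident to $v_0$.
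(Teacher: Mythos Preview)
Your construction is correct and genuinely different from the paper's. Both place $v_0$ on the horizontal line through $v_1$ and close the cycle with a horizontal edge $v_0v_1$, but the internal choices differ. The paper builds the path $v_1v_2\cdots v_m$ using slopes $0$ and $-\pi/4$ (each step goes West or Southeast), so the $y$-coordinates are only weakly decreasing; it then always sends $v_mv_0$ Northeast. You instead use slopes $\pm\pi/4$ for the path, so the $y$-coordinates strictly decrease, and you choose the slope of $v_mv_0$ to be $\epsilon_{m-1}$, i.e.\ the edge ``bounces'' off $v_m$ in the opposite $x$-direction from the last path edge. For the non-degeneracy checks the paper invokes Lemma~\ref{slopenumlem21} (the turning-point lemma) as a black box, whereas you do the coefficient bookkeeping by hand; your key observation that the coefficient of $x_m$ in $x_0$ is $1+\nu\epsilon_{m-1}=2$ is exactly what makes both condition~(3) and the $v_mv_0$ check go through in one stroke.

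Two small comments. First, your rational-independence argument for ``$v_j$ not on a path edge'' tacitly assumes $j>i$; you should note the symmetric case $j<i$, and separately dispose of $v_0$ (whose $x$-coordinate is not independent of the others). In fact, because your path is strictly $y$-monotone, no $v_j$ with $1\le j\le m$ other than $v_i,v_{i+1}$ can have $y$-coordinate in $[y(v_{i+1}),y(v_i)]$, so the path-edge check is actually trivial and needs no rational-independence computation at all; and $v_0$ has $y=0$, which only meets a path edge at the single point $v_1$. Second, your WLOG assumption $x_1<x_m$ is never used; the argument already works without it.
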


\noindent {\bf Proof.}
We can assume without loss of generality that $x_2>x_1$.
Place $v_1$ at the point $(x_1,0)$ of the
$x$-axis. Assume that for some $i<m$, we have already determined
the positions of $v_1, v_2, \ldots v_{i}$, satisfying conditions
(1)--(3). If $x_{i+1}>x_i$, then place $v_{i+1}$ at the (unique)
point Southeast of $v_i$, whose $x$-coordinate is $x_{i+1}$. If
$x_{i+1}<x_i$, then put $v_{i+1}$ at the point West of $x_i$,
whose $x$-coordinate is $x_{i+1}$. Clearly, this placement of
$v_{i+1}$ satisfies (1)--(3), and the segment $v_iv_{i+1}$ does
not pass through any point $v_j$ with $j<i$.

After $m$ steps, we obtain a noncrossing straight-line drawing of
the path $v_1v_2\ldots v_{m}$, satisfying conditions (1)--(3). We
still have to find a right location for $v_0$. Let $R_W$ and
$R_{SE}$ denote the rays (half-lines) starting at $v_1$ and
pointing to the West and to the Southeast. Further, let $R$ be the
ray starting at $v_m$ and pointing to the Northeast. It follows
from the construction that all points $v_2, \ldots, v_{m}$ lie in
the convex cone below the $x$-axis, enclosed by the rays $R_W$ and
$R_{SE}$.

Place $v_0$ at the intersection point of $R$ and the
$x$-axis. Obviously, the segment $v_mv_0$ does not pass through
any other vertex $v_j\;(0<j<m)$. Otherwise, we could find a
drawing of the cycle $v_jv_{j+1}\ldots v_m$ with slopes $0,
\pi/4,$ and $-\pi/4$. By Lemma \ref{slopenumlem21}, this would imply that the
numbers $x_j, x_{j+1}, \ldots, x_m$ are {\em not} independent over
the rationals, contradicting our assumption. It is also clear that
the horizontal segment $v_0v_1$ does not pass through any vertex
different from its endpoints because all other vertices are
below the horizontal line determined by $v_0v_1$.
Hence, we obtain a proper
straight-line drawing of $C$ satisfying conditions (1),(2), and
(4). Note that (5) automatically follows from Lemma \ref{slopenumlem21}.

It remains to verify (3). The only thing we have to check is that
$x(v_0)$ does not coincide with any other $x(v_i)$. Suppose it
does, that is, $x(v_0)=x(v_i)=x_i$ for some $i>0$. By the second
statement of Lemma \ref{slopenumlem21}, there is a vanishing linear combination
$$\lambda_0x(v_0)+\lambda_1x_1+\lambda_2x_2+\ldots+\lambda_mx_m=0$$
with rational coefficients $\lambda_i$, where the number of
nonzero coefficients is at least the number of turning points,
which cannot be smaller than {\em three}. Therefore, if in this
linear combination we replace $x(v_0)$ by $x_i$, we still obtain a
nontrivial rational combination of the numbers $x_1, x_2,\ldots,
x_m$. This contradicts our assumption that these numbers are
independent over the rationals. \hfill $\Box$
\medskip

\subsection{Subcubic Graphs - Proof of Theorem \ref{slopenum2}}

First we settle Theorem \ref{slopenum2} in a special case.
 
\begin{lemma}\label{slopenumlem31} Let $m,k\ge 2$ and let $G$ be a graph consisting of two
disjoint cycles, $C=\{v_0, v_1, \ldots , v_m\}$ and $C'=\{v_0', v_1', \ldots , v_m'\}$,
connected by a single edge $v_0v'_0$. 

Then, for any sequence $x_1, x_2, \ldots , x_m, x'_1, x'_2, \ldots , x'_k$  
of real numbers, linearly independent over the rationals, $G$ has a 
straight-line drawing satisfying the following conditions:

\noindent(1) {\em The vertices $v_i$ and $v'_j$ are mapped into points with
$x$-coordinates $x(v_i)=x_i\; (1\le i\le m)$ and $x(v_j)=x'_j\; (1\le j\le k)$.}

\noindent(2) {\em The slope of every edge is $0, \pi/2, \pi/4,$ or $-\pi/4.$}

\noindent(3) {\em No vertex is to the North of any vertex of degree {\it two}.}

\noindent(4) {\em The $x$-coordinates of all the vertices are a linear combination with rational coefficients of $x_1, x_2, \ldots , x_m, x'_1, x'_2, \ldots , x'_k$.}  

\end{lemma}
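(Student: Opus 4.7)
The plan is to draw $C$ and $C'$ separately using Lemma~\ref{slopenumlem22}, place them in disjoint horizontal strips, and then join $v_0$ to $v_0'$ by a single segment whose slope lies in $\{0,\pi/2,\pi/4,-\pi/4\}$.

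First I apply Lemma~\ref{slopenumlem22} to $C$ with $v_0$ as the exceptional vertex and the prescribed $x$-coordinates $x_1,\ldots,x_m$. The resulting drawing places $v_0$ on the top horizontal line (together with its neighbor $v_1$), every other $v_i$ strictly below, and gives $v_0$ an $x$-coordinate $a$ that is a rational combination of the $x_i$'s and differs from each $x_i$. Applying the same lemma to $C'$ produces an analogous drawing with $v_0'$ on top at $x$-coordinate $a'$, a rational combination of the $x_j'$'s. I then reflect the drawing of $C'$ across the horizontal line through $v_0'$; this fixes slope $0$ and swaps $\pi/4$ with $-\pi/4$, so the edge slopes of $C'$ stay in $\{0,\pi/4,-\pi/4\}$, and $v_0'$ (with $v_1'$) now sits on the bottom line of the reflected $C'$ with every other $v_j'$ strictly above.

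Next I translate the reflected copy of $C'$ upward by a positive amount $T$. By the rational independence of $x_1,\ldots,x_m,x_1',\ldots,x_k'$, the equation $a=a'$ is a rational relation whose coefficients must all vanish, which forces $a=a'=0$. So if $a\neq a'$ I take $T:=|a'-a|>0$ and obtain slope $\pi/4$ or $-\pi/4$ for the segment $v_0v_0'$; if $a=a'=0$ I take any $T>0$ and use a vertical connecting segment, slope $\pi/2$. Either way every edge slope lies in $\{0,\pi/2,\pi/4,-\pi/4\}$, so (2) holds, and (1) and (4) follow immediately from Lemma~\ref{slopenumlem22}.

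It remains to check (3) and to ensure that the new edge meets no other vertex. Since $C$ lies in the half-plane $y\le 0$ while the translated $C'$ lies in $y\ge T>0$, the segment $v_0v_0'$ can coincide with another vertex only on the boundary lines $y=0$ or $y=T$; on these lines the only candidates besides $v_0,v_0'$ are $v_1$ with $x$-coordinate $x_1\neq a$ and $v_1'$ with $x_1'\neq a'$, so both are avoided. For (3), a vertex North of a degree-two vertex would have to share its $x$-coordinate; Lemma~\ref{slopenumlem22} forbids such coincidences within each cycle, and between the two cycles any coincidence would be a nontrivial rational relation among the $x_i$'s and $x_j'$'s, contradicting independence. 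The sole exception $a=a'=0$ makes $v_0'$ stand directly North of $v_0$, but since both have degree three this does not violate (3). The main obstacle I anticipate is precisely this borderline case $a=a'$, which is exactly why the allowed slope set must include the fourth slope $\pi/2$.
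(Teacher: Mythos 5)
Your proof is correct and follows essentially the same route as the paper: draw each cycle via Lemma~\ref{slopenumlem22}, reflect $C'$, stack the two drawings vertically, and join $v_0$ to $v_0'$ by an edge of one of the allowed slopes, using rational independence to exclude coordinate coincidences. The only deviation is that you retain the degenerate case $a=a'$ (handled with a vertical connecting edge), whereas the paper rules out $x(v_0)=x(v_0')$ outright by the independence argument; both treatments are sound.
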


\noindent {\bf Proof.}
Apply Lemma \ref{slopenumlem22} to cycle $C$ with vertices $v_0, v_1, \ldots , v_m$ and with 
assigned $x$-coordinates $x_1, x_2, \ldots , x_m$, and analogously,
to the cycle $C'$, with vertices $v'_0, v'_1, \ldots , v'_k$ and assigned 
$x$-coordinates $x'_1, x'_2, \ldots , x'_k$. For simplicity, the resulting 
drawings are also denoted by $C$ and $C'$.

Let $x_0$ and $x'_0$ denote the $x$-coordinates of $v_0\in C$ and $v'_0\in C'.$
It follows from (5) of Lemma \ref{slopenumlem22} that $x_0$
is a linear combination of $x_1, x_2, \ldots , x_m$, and $x_0'$
is a linear combination of $x'_1, x'_2, \ldots , x'_k$ with rational coefficients.
Therefore, if $x_0=x'_0$, then there is a nontrivial 
linear combination of $x_1, x_2, \ldots , x_m, x'_1, x'_2, \ldots , x'_k$ 
that gives $0$, contradicting the assumption that these numbers are 
independent over the rationals.  Thus, we can conclude that $x_0\ne x'_0$. 
Assume without loss of generality that $x_0<x'_0$.  
Reflect $C'$ about the $x$-axis, and shift it in the vertical 
direction so that $v'_0$ ends up to the Northeast from $v_0$. Clearly, we can 
add the missing edge $v_0v'_0$. Let $D$ denote the resulting drawing of $G$.  
We claim that $D$ meets all the requirements. Conditions (1), 
(2), (3) and (4) are obviously satisfied, we only have to check that no vertex lies
in the interior of an edge. It follows from Lemma \ref{slopenumlem22} that the $y$-coordinates 
of $v_1, \ldots , v_m$ are all smaller than or equal to the $y$-coordinate of 
$v_0$ and the $y$-coordinates of $v'_1, \ldots , v'_k$ are all greater than or 
equal to the $y$-coordinate of $v'_0$.  We also have $y(v_0)<y(v'_0)$.
Therefore, there is no vertex in the interior of $v_0v'_0$.
Moreover, no edge of $C$ (resp. $C'$) can contain any vertex 
of $v'_0, v'_1, \ldots , v'_k$ (resp. $v_0, v_1, \ldots , v_m$)
in its interior. \hfill $\Box$

\medskip

The rest of the proof is by induction on the number of vertices of $G$. The
statement is trivial if the number of vertices is at most {\em
two}. Suppose that we have already established Theorem \ref{slopenum2} for all
graphs with fewer than $n$ vertices.

Suppose that $G$ has $n$ vertices, it is not a cycle and not
the union of two cycles
connected by one edge.
%Let $v_1, v_2, \ldots , v_m$ be the vertices of $G$ with degree less than {\em three}, and let the $x$-coordinates assigned to them be $x_1, x_2, \ldots , x_m$.

Unfortunately we have to distinguish several cases. Most of these are very special and less interesting instances that prevent us from using our main argument, which is considered last after clearing all obstacles, as Case 9.
\medskip

\noindent{\bf Case 1:} {\em $G$ has a vertex of degree {\it one}}.
\smallskip

Assume, without loss of generality, that $v_1$ is such a vertex.
If $G$ has no vertex of degree {\em three}, then it consists of a
simple path $P=v_1v_2\ldots v_m$, say. Place $v_m$ at the point
$(x_m,0)$. In general, assuming that $v_{i+1}$ has already been
embedded for some $i<m$, and $x_i<x_{i+1}$, place $v_{i}$ at the
point West of $v_{i+1}$, whose $x$-coordinate is $x_{i}$. If
$x_i>x_{i+1}$, then put $v_{i}$ at the point Northeast of
$v_{i+1}$, whose $x$-coordinate is $x_{i}$. The resulting drawing
of $G=P$ meets all the requirements of the theorem. To see this,
it is sufficient to notice that if $v_j$ would be Northwest of
$v_m$ for some $j<m$, then we could apply Lemma \ref{slopenumlem21} to the cycle
$v_jv_{j+1}\ldots v_m$, and conclude that the numbers $x_j,
x_{j+1},\ldots, x_m$ are dependent over the rationals. This
contradicts our assumption.

Assume next that $v_1$ is of degree {\em one}, and that $G$ has at
least one vertex of degree {\em three}. Suppose without loss of
generality that $v_1v_2\ldots v_kw$ is a path in $G$, whose
internal vertices are of degree {\em two}, but the degree of $w$
is {\em three}. Let $G'$ denote the graph obtained from $G$ by
removing the vertices $v_1, v_2, \ldots , v_{k}$. Obviously, $G'$
is a connected graph, in which the degree of $w$ is {\em two}.

If $G'$ is a {\em cycle}, then apply Lemma \ref{slopenumlem22} to $C=G'$ with $w$
playing the role of the vertex $v_0$ which has no preassigned
$x$-coordinate. We obtain an embedding of $G'$ with edges of
slopes $0, \pi/4,$ and $-\pi/4$ such that $x(v_i)=x_i$ for all
$i>k$ and there is no vertex to the North, to the Northeast, or to
the Northwest of $w$. By (5) of Lemma \ref{slopenumlem22}, $x(w)$ is a linear combination of
$x_{k+1},\ldots, x_m$ with rationals coefficients.
Therefore, $x(w)\neq x_k$, so we can place $v_k$ at the point to
the Northwest or to the Northeast of $w$, whose $x$-coordinate is
$x_k$, depending on whether $x(w)>x_k$ or $x(w)<x_k$. After this,
embed $v_{k-1}, \ldots , v_1$, in this order, so that $v_i$ is
either to the Northeast or to the West of $v_{i+1}$ and
$x(v_i)=x_i$. According to property (4) in Lemma \ref{slopenumlem21}, the path
$v_1v_2\ldots v_k$ lies entirely above $G'$, so no point of
$G$ can lie to the North or to the Northwest of $v_1$.

If $G'$ is {\em not a cycle}, then use the induction hypothesis to
find an embedding of $G'$ that satisfies all conditions of Theorem
\ref{slopenum2}, with $x(w)=x_k$ and $x(v_i)=x_i$ for every $i>k$. Now place
$v_k$ very far from $w$, to the North of it, and draw $v_{k-1},
\ldots , v_1$, in this order, in precisely the same way as in the
previous case. Now if $v_k$ is far enough, then none of the points
$v_k, v_{k-1},\ldots, v_1$ is to the Northwest or to the Northeast
of any vertex of $G'$. It remains to check that condition (4) is true
for $v_1$, but this follows from the fact that there is no point
of $G$ whose $y$-coordinate is larger than that of $v_1$.

\smallskip

{}  From now on, we can and will assume that $G$ has {\em no vertex of
degree one}.

A graph with {\em four} vertices and {\em five} edges between them
is said to be a {\em $\Theta$-graph}.

\medskip
\noindent{\bf Case 2:} {\em $G$ contains a $\Theta$-subgraph.}
\smallskip

Suppose that $G$ has a $\Theta$-subgraph with vertices $a,b,c,d,$
and edges $ab$, $bc$, $ac$, $ad$, $bd$. 
If neither $c$ nor $d$ has a third
neighbor, then $G$ is identical to this graph, which can easily be
drawn in the plane with all conditions of the theorem satisfied.

If $c$ and $d$ are connected by an edge, then all four points of
the $\Theta$-subgraph have degree {\em three}, so that $G$ has no
other vertices. So $G$ is a complete graph of four vertices, and it
has a drawing that meets the
requirements.

Suppose that $c$ and $d$ have a common neighbor $e\neq a,b$. If
$e$ has no further neighbor, then $a,b,c,d,e$ are the only
vertices of $G$, and again we can easily find a proper drawing.
Thus, we can assume that $e$ has a third neighbor $f$. By the
induction hypothesis, $G'=G\setminus \{a,b,c,d,e\}$ has a drawing
satisfying the conditions of Theorem \ref{slopenum2}. In particular, no vertex
of $G'$ is to the North of $f$ (and to the Northwest of $f$,
provided that the degree of $f$ in $G'$ is {\em one}). Further,
consider a drawing $H$ of the subgraph of $G$ induced by the
vertices $a,b,c,d,e$, which satisfies the requirements. We
distinguish two subcases.

If the degree of $f$ in $G'$ is {\em one}, then take a very small
{\em homothetic} copy of $H$ (i.e., similar copy in parallel
position), and rotate it about $e$ in the clockwise direction
through $3\pi/4$. There is no point of this drawing, denoted by
$H'$, to the Southeast of $e$, so that we can translate it into a
position in which $e$ is to the Northwest of $f\in V(G')$ and very
close to it, to a sufficient distance so that (5) is satisfied.
Connecting now $e$ to $f$, we obtain a drawing of $G$
satisfying the conditions. Note that it was important to make $H'$
very small and to place it very close to $f$, to make sure that
none of its vertices is to the North of any vertex of $G'$ whose
degree is at most {\em two}, or to the Northwest of any vertex of
degree {\em one} (other than $f$).

If the degree of $f$ in $G'$ is {\em two}, then we follow the same
procedure, except that now $H'$ is a small copy of $H$, rotated by
$\pi$. We translate $H'$ into a position in which $e$ is to the
North of $f$, and connect $e$ to $f$ by a vertical segment. It is
again clear that the resulting drawing of $G$ meets the
requirements in Theorem \ref{slopenum2}. Thus, we are done if $c$ and $d$ have a
common neighbor $e$.

Suppose now that only one of $c$ and $d$ has a third neighbor,
different from $a$ and $b$. Suppose, without loss of generality,
that this vertex is $c$, so that the degree of $d$ is {\em two}.
Then in $G'=G\setminus \{a,b,d\}$, the degree of $c$ is {\em one}.
Apply the induction hypothesis to $G'$ so that the $x$-coordinate
originally assigned to $d$ is now assigned to $c$ (which had no
preassigned $x$-coordinate in $G$). In the resulting drawing, we
can easily reinsert the remaining vertices, $a, b, d$, by adding a
very small square whose lowest vertex is at $c$ and whose
diagonals are parallel to the coordinate axes. The highest vertex
of this square will represent $d$, and the other two vertices will
represent $a$ and $b$.

We are left with the case when both $c$ and $d$ have a third
neighbor, other than $a$ and $b$, but these neighbors are
different. Denote them by $c'$ and $d'$, respectively. Create a
new graph $G'$ from $G$, by removing $a, b, c, d$ and adding a new
vertex $v$, which is connected to $c'$ and $d'$. Draw $G'$ using
the induction hypothesis, and reinsert $a,b,c,d$ in a small
neighborhood of $v$ so that they form the vertex set of a very
small square with diagonal $ab$. (See Figure \ref{slopenumfig1}.) As before, we
have to choose this square sufficiently small to make sure that
$a, b, c, d$ are not to the North of any vertex $w\neq c',d',v$ of
$G'$, whose degree is at most {\em two}, or to the Northwest of
any vertex of degree {\em one} and pick an appropriate scaling
to make sure that (5) is satisfied.
Thus, we are done if $G$ has a $\Theta$-subgraph.

\smallskip

So, from now on we assume that $G$ has {\em no
$\Theta$-subgraph}.

\begin{figure}[htb]
\epsfxsize=10truecm
\begin{center}
\epsffile{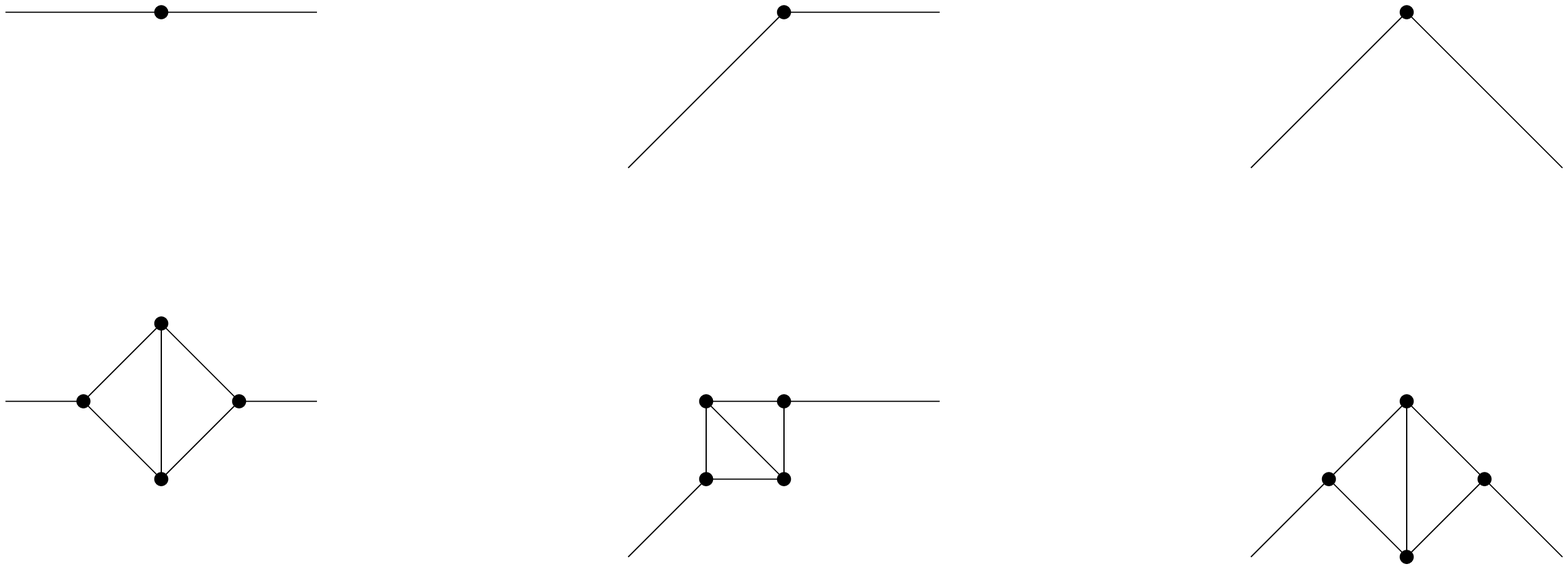}
\caption{Replacing $v$ by $\Theta$.}
\label{slopenumfig1}
\end{center}
\end{figure}

\medskip
\noindent{\bf Case 3:} {\em $G$ has no cycle that passes through a
vertex of degree {\em two}.}
\smallskip

Since $G$ is not three-regular, it contains at least one vertex of
degree {\em two}. Consider a decomposition of $G$ into
$2$-connected blocks and edges. If a block contains a vertex of
degree {\em two}, then it consists of a single edge. The block
decomposition has a treelike structure, so that there is a vertex
$w$ of degree {\em two}, such that $G$ can be obtained as the
union of two graphs, $G_1$ and $G_2$, having only the vertex $w$
in common, and there is no vertex of degree {\em two} in $G_1$.

By the induction hypothesis, for any assignment of rationally
independent $x$-coordinates to all vertices of degree less than
{\em three}, $G_1$ and $G_2$ have proper straight-line embeddings
(drawings) satisfying conditions (1)--(5) of the theorem. The only
vertex of $G_1$ with a preassigned $x$-coordinate is $w$. Applying
a vertical translation, if necessary, we can achieve that in both
drawings $w$ is mapped into the same point. Using the induction
hypothesis, we obtain that in the union of these two drawings,
there is no vertex in $G_1$ or $G_2$ to the North or to the
Northwest of $w$, because the degree of $w$ in $G_1$ and $G_2$ is
{\em one} (property (4)). This is stronger than what we need:
indeed, in $G$ the degree of $w$ is {\em two}, so that we require
only that there is no point of $G$ to the North of $w$ (property
(3)).

The superposition of the drawings of $G_1$ and $G_2$ satisfies all
conditions of the theorem. Only two problems may occur:
\begin{enumerate}
 \item A vertex of $G_1$ may end up at a point to the North
of a vertex of $G_2$ with degree {\em two}.
 \item The (unique) edges in $G_1$ and $G_2$, incident to $w$,
may partially overlap.
\end{enumerate}
Notice that both of these events can be avoided by enlarging the
drawing of $G_1$, if necessary, from the point $w$, and rotating
it about $w$ by $\pi/4$ in the clockwise direction. The latter
operation is needed only if problem 2 occurs. This completes the
induction step in the case when $G$ has no cycle passing through a
vertex of degree {\em two}.

\medskip

\noindent{\bf Case 4:} {\em $G$ has two adjacent vertices of degree {\em two}}.
\smallskip

Take a longest path that contains only degree two vertices. Without loss of generality, assume that this path is $v_1v_2\ldots v_k$. Denote the degree three neighbor of $v_1$ by $u$ and the degree three neighbor of $v_k$ by $w$. 
Let 
$G'=G\setminus \{v_1\ldots v_k\}$. 
Now we distinguish two subcases depending on whether these two vertices are the same or not.

\medskip

{\em Case} 4/a: {\em $u\ne w$}.
\smallskip

First suppose that $G'$ is connected.

If $G'$ is not a cycle, embed it using induction with $x_1$ being the prescribed $x$-coordinate of $u$ and $x_k$ being the prescribed $x$-coordinate of $w$. Now place the $v_i$ vertices one by one high above this drawing, starting with $v_1$, using NW and NE directions. Finally we embed $v_k$ above $w$ and we are done.

If $G'$ is a cycle, then embed it using Lemma \ref{slopenumlem22} with $v_0=u$ and prescribed $x$-coordinate $x_k$ for $w$. Remember that there are no vertices above $u$. So first, we can place $v_1$ to the NW or NE from $u$. Then we place the $v_i$ vertices one by one using NW and NE directions. Finally we embed $v_k$ above $w$ and we are done.

Now suppose 
$G'$ 
has two components. If none of them is a cycle,
embed both of them using induction, high above each other, with $x_1$ being the prescribed $x$-coordinate of $u$ and $x_k$ being the prescribed $x$-coordinate of $w$. Now place the $v_i$ vertices one by one high above the so far drawn components, starting with $v_1$, using NW and NE directions. Finally we embed $v_k$ above $w$ and we are done.

Finally, if $G'=G\setminus \{v_1\ldots v_k\}$ has two components one of which, say the one containing $w$, is a cycle, then embed the component of $u$ using induction with prescribed $x$-coordinate $x_1$ for $u$ or, if it is a cycle, Lemma \ref{slopenumlem22} with $v_0=u$. It is easy to see that we can embed the $v_i$ vertices one by one, starting with $v_1$, just like in the previous cases, and then the rest of the $v_i$ vertices one by one using NW and NE directions. Finally we embed the cycle containing $w$ using Lemma \ref{slopenumlem22} with $v_0=w$, but upside down, so that $w$ has (one of) the smallest $y$-coordinate(s). Shift this cycle vertically such that the edge $v_kw$ has NW or NE direction and we are done.

Note that this last case even works for $k=1$.

\medskip

{\em Case} 4/b: {\em $u=w$}.
\smallskip

Denote the third neighbor of $u$ by $t$.
If the degree of $t$ is two, then deleting the longest path containing $t$ that contains only degree two vertices, the remaining graph will have two components, one of which is a cycle. Thus we end up exactly in the last subcase of Case 4/a, thus we are done.

If the degree of $t$ is three, apply Lemma \ref{slopenumlem22} with $v_0=u$ to the cycle $C=uv_1\ldots v_k$. Denote the $x$-coordinate of $u$ by $x_0$. If $G\setminus C$ is a cycle, we can use Lemma \ref{slopenumlem31}. Otherwise, embed $G\setminus C$ using induction with $x_0$ being the prescribed $x$-coordinate of $t$. Now place $C$ sufficiently high above this drawing.

\medskip

\medskip

\noindent{\bf Case 5 (Main case):} {\em $G$ has a cycle passing through a
vertex of degree {\em two}}.
\smallskip

By assumption, $G$ itself is not a cycle. Therefore, we can also
find a {\em shortest} cycle $C$ whose vertices are denoted by $v,
u_1, \ldots , u_{k}$, in this order, where the degree of $v$ is
{\em two} and the degree of $u_1$ is {\em three}. The length of
$C$ is $k+1$.

It follows from the minimality of $C$ that $u_i$ and $u_j$ are not
connected by an edge of $G$, for any $|i-j|>1$. Moreover, if
$|i-j|>2$, then $u_i$ and $u_j$ do not even have a common neighbor
$(1\le i\neq j\le k)$. This implies that any vertex $v\in
V(G\setminus C)$ has at most {\em three} neighbors on $C$, and
these neighbors must be consecutive on $C$. However, {\em three}
consecutive vertices of $C$, together with their common neighbor,
would form a $\Theta$-subgraph in $G$ (see Case 2). Hence, we can
assume that every vertex belonging to $G\setminus C$ is joined to
at most {\em two} vertices on $C$.

%Let $B_i$ denote the set of all vertices of $G\setminus C$ that
%have precisely $i$ neighbors on $C\; (i=0,1,2)$. Thus, we have
%$V(G\setminus C)=B_0\cup B_1\cup B_2$. Further, $B_1=B_1^2\cup
%B_1^3$, where an element of $B_1$ belongs to $B_1^2$ or $B_1^3$,
%according to whether its degree in $G$ is {\em two} or {\em
%three}.

Consider the list $v_1, v_2,\ldots, v_m$ of all vertices of $G$
with degree {\em two}. (Recall that we have already settled the
case when $G$ has a vertex of degree {\em one}.) Assume without
loss of generality that $v_1=v$ and that $v_i$ belongs to $C$ if
and only if $1\le i\le j$ for some $j\le m$.

\medskip
Let ${\bf x}$ denote the {\em assignment} of $x$-coordinates to
the vertices of $G$ with degree {\em two}, that is, ${\bf
x}=(x(v_1), x(v_2), \ldots,$$x(v_m))$$=(x_1, x_2, \ldots, x_m)$.
Given $G$, $C$, ${\bf x}$, and a real parameter $L$, we define the
following so-called {\sc Embedding Procedure$(G, C, {\bf x}, L)$}
to construct a drawing of $G$ that meets all requirements of the
theorem, and satisfies the additional condition that the
$y$-coordinate of every vertex of $C$ is at least $L$ higher than
the $y$-coordinates of all other vertices of $G$.

\medskip
Let $u_1'$ be the neighbor of $u_1$ in $G \setminus C$. 
We mark two different cases here and all the steps in the Embedding
Procedure will be defined for both the cases. 
If $u_1'$ is a vertex of degree three in $G$, we will call it Subcase 5(a), 
and we define $G' = G \setminus C$.
On the other hand, if $u_1'$ is a vertex of degree two, then by Case 4, its
other neighbor (besides $v$), say $u_1''$, is a degree three vertex. We call 
this Subcase 5(b) and define 
$G' = G \setminus (C \cup \{u_1'\})$. 
The main idea
of the Embedding procedure is to inductively embed $G'$ and place the 
rest of the graph in a convenient way. 
%$u_1'$ is a degree three vertex, or otherwise, embed 
%In each of the steps to come below, we will mark the former case as
%Case 5(a) and the later as Case 5(b). We will further note here that 
%because of Case 4, the other neighbor of $u_1'$ in Case 5(b),
%which we will call $u_1''$, must be a 
%degree three vertex.

%\begin{remark}\label{disconnected_rmk}
%We note here that because of assertion (5) in the Theorem \ref{slopenum2}, 
%we can effectively deal with the case where $G'$ has more than one
%component. We note that we can just place each component sufficiently higher
%than the previous, and $G'$ should still satisfy all the 
%properties of Theorem \ref{slopenum2}.
%\end{remark}
\smallskip

Let $B_i$ denote the set of all vertices of $G'$ that
have precisely $i$ neighbors on $C\; (i=0,1,2)$. Thus, we have
$V(G')=B_0\cup B_1\cup B_2$. Further, $B_1=B_1^2\cup
B_1^3$, where an element of $B_1$ belongs to $B_1^2$ or $B_1^3$,
according to whether its degree in $G$ is {\em two} or {\em
three}.

\noindent{\sc Step} 1: If $G'$ is {\em not} a cycle, then 
construct recursively a drawing of
$G'$ satisfying the conditions of Theorem \ref{slopenum2} with the
assignment ${\bf x}'$ of $x$-coordinates $x(v_i)=x_i$ for $j<i\le
m$, and 
$x(u_1')=x_1$ in Subcase 5(a), and, $x(u_1'')= x(u_1')$ in Subcase 5(b).
%where $u_1'$ is the unique vertex in
%$G\setminus C$, connected by an edge to $u_1\in V(C)$.
%Note that since $u_1$ already has a degree two neighbor, $v_1$, $u_1'$ must have degree three according to Case 6 and 8.
\smallskip

If $G'$ is a cycle, then, by assumption, there are at least two
edges between $C$ and $G'$. One of them connects $u_1$ to $u_1'$.
Let $u_{\alpha}u'_{\alpha}$ be another such edge, where 
$u_{\alpha}\in C$ and $u'_{\alpha}\in G'$. Since the maximum degree is three, 
$u'_1\ne u'_{\alpha}$. 
Now construct recursively a drawing of
$G'$ satisfying the conditions of Lemma \ref{slopenumlem22}, 
%with the
%assignment ${\bf x}'$ of $x$-coordinates $x(v_i)=x_i$ for $j<i\le
%m$, $x(u_1')=x_1$, and with 
with the exceptional vertex as $u'_{\alpha}$.

\smallskip

We note here that if $G'$ is disconnected, but the components are not
cycles, then we just place them vertically far apart and we still have a 
good recursive drawing of $G'$. 
Suppose that it is disconnected and 
some components are cycles. If the component connected to $u_1$ or
$u_1'$ (based on Subcase 5(a) or 5(b)) is a cycle, 
we draw the cycle exactly as in the
preceding paragraph. For all other components that are cycles, 
we note that since $G$ is connected, there must be at least
one vertex of the cycle connected to $G \setminus G'$ (in fact at least two
because of Case $4$). This is a degree three vertex in $G$ and we will 
call this the exceptional vertex and draw the cycle using Lemma 
\ref{slopenumlem22}. At the end, we shift all components vertically to 
place them sufficiently far apart. We note that this drawing of $G'$
will satisfy all the conditions in Theorem \ref{slopenum2}.

\smallskip

\noindent{\sc Step} 2: For each element of $B_1^2\cup B_2$, take
two rays starting at this vertex, pointing to the Northwest and to
the North. Further, take a vertical ray pointing to the North from
each element of $B_1^3$ and each element of the set $B_{\bf
x}:=\{(x_2, 0), (x_3, 0), \ldots , (x_j, 0)\}$. Let ${\cal R}$
denote the set of all of these rays. Choose the $x$-axis above all
points of $G'$ and all intersection points between the rays in
$\cal R$.

For any $u_h\; (1\le h\le k)$ whose degree in $G$ is {\em three},
define $N(u_h)$ as the unique neighbor of $u_h$ in $G'$.
If $u_h$ has degree {\em two} in $G$, then  $u_h=v_i$ for some
$1\le i\le j$, and let $N(u_h)$ be the point $(x_i, 0)$.
\smallskip

\begin{figure}[htb]
\epsfxsize=6.5truecm
\begin{center}
\epsffile{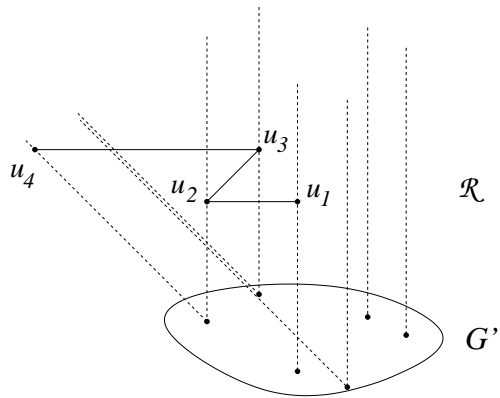}
\caption[Recursively placing vertices]{Recursively place $u_1, u_2, \ldots u_{k}$
on the rays belonging to ${\cal R}$.}
\label{slopenumfig2}
\end{center}
\end{figure}

\noindent{\sc Step} 3: Recursively place $u_1, u_2, \ldots u_{k}$
on the rays belonging to ${\cal R}$, as follows. 
In Subcase 5(a), place $u_1$ on
the vertical ray starting at $N(u_1)=u_1'$ such that $y(u_1)=L$.
In Subcase 5(b), place $u_1'$ on the vertical ray starting at $N(u_1') = u_1''$
such that $y(u_1') = L$. If $x_1< x(u_1')$ then place $u_1$ to the 
West of $u_1'$ on the line $x=x_1$, otherwise place $u_1$ to the 
Northeast of $u_1$, again on $x=x_1$.
Suppose that for some $i<k$ we have already placed $u_1, u_2,
\ldots u_{i}$, so that $L\le y(u_1)\le y(u_2)\le\ldots\le y(u_i)$
and there is no vertex to the West of $u_i$. Next we determine the
place of $u_{i+1}$.

If $N(u_{i+1})\in B_1^2$, then let $r\in{\cal R}$ be the ray
starting at $N(u_{i+1})$ and pointing to the Northwest. If
$N(u_{i+1})\in B_1^3\cup B_{\bf x}$, let $r\in{\cal R}$ be the ray
starting at $N(u_{i+1})$ and pointing to the North. In both cases,
place $u_{i+1}$ on $r$: if $u_i$ lies on the left-hand side of
$r$, then put $u_{i+1}$ to the Northeast of $u_i$; otherwise, put
$u_{i+1}$ to the West of $u_i$.

If $N(u_{i+1})\in B_2$, then let $r\in{\cal R}$ be the ray
starting at $N(u_{i+1})$ and pointing to the North, or, if we have
already placed a point on this ray, let $r$ be the other ray from
$N(u_{i+1})$, pointing to the Northwest, and proceed as before.
\smallskip

\begin{figure}[htb]
\epsfxsize=7truecm
\begin{center}
\epsffile{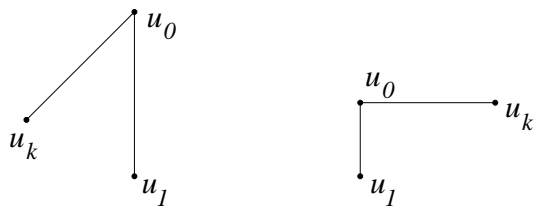}
\caption{Finding the right position for $u_0$.}
\label{slopenumfig3}
\end{center}
\end{figure}

\noindent{\sc Step} 4: Suppose we have already placed $u_{k}$. It
remains to find the right position for $u_0:=v$, which has only
two neighbors, $u_1$ and $u_{k}$. Let $r$ be the ray at $u_{1}$,
pointing to the North. If $u_{k}$ lies on the left-hand side of
$r$, then put $u_0$ on $r$ to the Northeast of $u_k$; otherwise,
put $u_0$ on $r$, to the West of $u_{k}$.

During the whole procedure, we have never placed a vertex on any
edge, and all other conditions of Theorem \ref{slopenum2} are satisfied \hfill $\Box$.
\medskip

Remark that the $y$-coordinates of the vertices $u_0=v, u_1,
\ldots , u_{k}$ are at least $L$ higher than the $y$-coordinates
of all vertices in $G\setminus C$. If we fix $G, C,$ and ${\bf
x}$, and let $L$ tend to infinity, the coordinates of the vertices
given by the above {\sc Embedding Procedure$(G, C, {\bf x}, L)$}
change continuously.

\section{Proof of Theorem \ref{subthm_4slopes}}
%\vspace{.51cm}

\subsection{Assumptions}

This subsection is dedicated to showing that assuming that the cubic
graph is bridgeless and triangle free does not restrict generality. 

We would use theorem \ref{slopenum2} to patch together different components of a cubic graph obtained after removal of some edges. For this we would want to note that we could rotate the components by any multiple of $\pi/4$ and still have a graph with the four basic slopes.

\begin{claim}\label{edge_conn}
A cubic graph with a bridge or a minimal two-edge disconnecting set can be drawn with the four basic slopes. 
\end{claim}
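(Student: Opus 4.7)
My plan is to delete the bridge (respectively the 2-edge cut) from $G$ and apply Theorem~\ref{slopenum2} to each of the two resulting pieces to obtain drawings in the four basic slopes, then glue the pieces together so that the removed edges reappear as basic-slope segments. The key tool is that Theorem~\ref{slopenum2} gives drawings with the four basic slopes together with the control conditions~(3) and~(4): nothing is directly North of a degree-2 vertex, and nothing is N or NW of a degree-1 vertex. In addition, the four basic slopes are preserved by translations, horizontal and vertical reflections, uniform scalings, and rotations by multiples of $\pi/2$, which gives me the rigid-motion freedom to reassemble the pieces.

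In the bridge case $e = uv$, the two components $G_1 \ni u$ and $G_2 \ni v$ of $G-e$ are subcubic graphs in which only the cut endpoint has degree less than three (degree two), and no component of $G_i$ can be a cycle, because every other vertex has degree three in $G_i$ and the cubicity of $G$ rules out a 2-regular component. Thus Theorem~\ref{slopenum2} applies to both. I reflect the drawing of $G_2$ across a horizontal line (preserving the four basic slopes) so that condition~(3) places nothing directly South of $v$, then translate $G_2$ far above $G_1$ with $x(v)=x(u)$. The vertical segment $uv$ is then unobstructed because nothing lies directly North of $u$ in $G_1$ and nothing lies directly South of $v$ in the reflected $G_2$.

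For a minimal 2-edge cut $\{e_1,e_2\}=\{u_1v_1,u_2v_2\}$ with $u_i\in G_1$ and $v_i\in G_2$, I split into two subcases (the case of a double edge $u_1=u_2$, $v_1=v_2$ is impossible in a simple cubic graph). When $u_1,u_2,v_1,v_2$ are all distinct, $u_1,u_2$ have degree two in $G_1$ and $v_1,v_2$ have degree two in $G_2$; I apply Theorem~\ref{slopenum2} to each, reflect $G_2$ vertically, and then uniformly scale $G_2$ by the factor $s=\bigl(x(u_2)-x(u_1)\bigr)/\bigl(x(v_2)-x(v_1)\bigr)$ so that the horizontal gap between $v_1,v_2$ matches the gap between $u_1,u_2$ (a prior vertical-axis reflection of $G_2$ makes $s$ positive if necessary). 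A translation then places $v_i$ directly above $u_i$ for $i=1,2$, and both vertical segments are unobstructed by conditions~(3) on the two drawings. When $u_1=u_2=u$ (the case $v_1=v_2$ is symmetric), $u$ has degree one in $G_1$, so by condition~(4) nothing lies N or NW of $u$; after applying Theorem~\ref{slopenum2} to $G_2$, reflecting vertically, uniformly scaling and translating, I can place $v_1$ directly North of $u$ and $v_2$ to the NE of $u$, yielding the two cut edges as basic-slope segments.

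The main technical obstacle is confirming that, in every configuration of the cut, the orientation-preserving freedoms that respect the four basic slopes (translations, uniform scalings, $\pi/2$-rotations, and horizontal or vertical reflections of $G_2$) really do suffice to realize the new edges with basic slopes and to avoid every other vertex. This reduces to a short finite case analysis: after possibly swapping the labels of $v_1,v_2$, reflecting $G_2$, or rotating it by a multiple of $\pi/2$, the required scaling factor can always be taken positive, and conditions~(3) and~(4) of Theorem~\ref{slopenum2} then guarantee that the added edges pass through no other vertex, producing the desired drawing of $G$ with only the four basic slopes.
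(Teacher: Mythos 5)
Your bridge case and your vertex-disjoint two-edge-cut case are sound and essentially reproduce the paper's argument: apply Theorem~\ref{slopenum2} to each side, flip one copy so that condition~(3) protects the approach from below, and join the cut vertices by vertical segments. (The paper matches the horizontal gap between the two cut vertices by prescribing their $x$-coordinates in Theorem~\ref{slopenum2}, while you match it by a uniform scaling of $G_2$; both are legitimate, since scaling preserves the four basic slopes.)

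The gap is in your subcase $u_1=u_2=u$. The paper avoids this configuration entirely by observing that if the two cut edges share a vertex, then the third edge at that vertex is a bridge, so the bridge case already applies; you instead attempt a direct construction, and as written it does not work. First, conditions~(3) and~(4) protect only the N and NW rays at the degree-one vertex $u$ of $G_1$, and (after one global flip/rotation of $G_2$) only a single common direction at $v_1$ and $v_2$; since the two new edges at $u$ must use two different rays, at least one of them (your NE edge $uv_2$, say) runs along a ray that is \emph{not} guaranteed to be free of vertices of $G_1$, so the assertion that ``conditions (3) and (4) guarantee that the added edges pass through no other vertex'' fails here. Second, once you demand that $uv_1$ be vertical and $uv_2$ have slope $\pi/4$, the relative offset of $v_1$ and $v_2$ inside the drawing of $G_2$ (which Theorem~\ref{slopenum2} does not let you control in the $y$-direction) forces the translation height of $G_2$ up to the choice of the scale factor; you can no longer ``translate $G_2$ far above $G_1$,'' so avoiding coincidences and vertex-on-edge incidences between the two drawings needs an argument you do not give, and it is not even clear the required sign condition on the offset can always be met. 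The clean repair is the paper's: note that a shared endpoint of the two cut edges produces a bridge, and reduce to the bridge case, so that only the vertex-disjoint situation needs the gluing construction.
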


\begin{proof}
We note that the above method cannot be extended to a minimal disconnecting set with more edges, as then, one of the components might be a cycle and then the above theorem cannot be invoked.

Both components obtained by removing the bridge can be drawn with four slopes using Theorem \ref{slopenum2}. Both have the north direction free for the vertex of degree two. To put these together, rotate the second one by $\pi$ and place the degree two vertices above each other. Move the components far enough so that none of the other vertices or edges overlap.

For a two-edge disconnecting set, we may note that these edges must be vertex-disjoint or the graph would contain a bridge. Then, the same procedure as above can be used, now keeping the distance between the two vertices of degree two the same in both components. 
\end{proof}

\begin{claim}\label{vert_conn}
A cubic graph with a cut-vertex or a two-vertex disconnecting set can be drawn with the four basic slopes.
\end{claim}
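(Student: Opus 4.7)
The plan is to reduce to Theorem \ref{slopenum2} by splitting $G$ at the cut-vertex (or two-vertex separator) into subcubic pieces, drawing each piece independently with the four basic slopes, and then reassembling the pieces using the symmetries of the plane that permute the set of basic slopes, namely rotations by multiples of $\pi/2$, reflections across one of the four basic directions, translations, and positive scalings. The freedom granted by properties (3) and (4) of Theorem \ref{slopenum2} (certain forbidden directions at low-degree vertices) is what will allow the reassembly to go through.

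\textbf{Cut-vertex case.} Let $v$ be a cut-vertex; since $v$ has degree three, $G - v$ splits into $r \in \{2,3\}$ components $H_1,\ldots,H_r$, and I form $G_i$ by taking $H_i$ together with $v$ and the edges from $v$ into $H_i$. Each $G_i$ is a connected subcubic graph in which $v$ has degree $1$ or $2$; in particular $v$ is a vertex of degree less than three, so Theorem \ref{slopenum2} applies and gives a straight-line drawing of $G_i$ using only the four basic slopes. To reassemble, I translate each drawing so that $v$ lies at the origin. By property (4) (resp.\ (3)), the remainder of $G_i$ sits in the strict southern half-plane below $v$ and avoids its NW ray when $\deg v = 1$ (resp.\ stays below the horizontal line through $v$ when $\deg v = 2$). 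I then apply, independently to each $G_i$, a symmetry from the eight-element dihedral group that preserves the set of basic slopes, chosen so that the edges at $v$ contributed by the different pieces point in three pairwise distinct rays; the available freedom (at least six admissible directions for the edge at a degree-$1$ boundary vertex, seven for the edges at a degree-$2$ one, and eight symmetries per piece) easily accommodates at most three edges to be placed. Finally, I scale each piece about $v$ by a small positive factor so that its bulk lies inside a narrow angular sector around its outgoing rays at $v$, ensuring the pieces do not interfere with one another outside of $v$.

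\textbf{Two-vertex separator case.} Let $\{u,w\}$ be a separating pair. Decompose $G - \{u,w\}$ into components $H_i$, and form $G_i = H_i + u + w + (\text{edges from } \{u,w\} \text{ into } H_i)$, assigning the edge $uw$ (if it is present in $G$) to a single piece. In every other $G_i$, I add a \emph{virtual} edge $uw$; this keeps $G_i$ subcubic because $u$ and $w$ each have at most two real edges in $G_i$. Choosing a common pair of rationally independent prescribed $x$-coordinates for $u$ and $w$, I invoke Theorem \ref{slopenum2} to draw every $G_i$ with the four basic slopes; the virtual (or real) edge $uw$ is then realized with some basic slope. I rotate each $G_i$ by a multiple of $\pi/2$ and reflect as needed so that in every drawing, $w$ lies due east of $u$; I then scale each $G_i$ about $u$ so that $|uw|$ is the same in every piece, and translate so that $u$ (hence also $w$) coincides across pieces. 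The main obstacle is the final compatibility check: the real edges at $u$ from distinct pieces, together, must form as many distinct rays as there are such edges, and likewise at $w$. Each piece contributes at most two real edges at $u$, none of them using the east direction (reserved for $uw$); the residual reflection about the horizontal $uw$-axis, which is the only nontrivial symmetry preserving the orientation of the virtual edge, can be applied independently to each piece to separate these rays. A short case analysis on the distribution of $u$'s (and $w$'s) real edges among the $G_i$ (parallel in spirit to Cases 4(a)--(b) of Theorem \ref{slopenum2}) handles every configuration. Once the rays are distinct, I remove the virtual edges, scale each piece to fit into a thin angular strip around the $uw$-axis, and obtain a valid straight-line drawing of $G$ with the four basic slopes. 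The delicate step is this ray-separation argument at $u$ and $w$; everything else is routine given Theorem \ref{slopenum2} and the basic-slope-preserving symmetries.
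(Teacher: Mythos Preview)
Your reassembly argument rests on a misreading of properties (3) and (4) of Theorem~\ref{slopenum2}. In the paper's terminology, ``$p_2$ is to the North of $p_1$'' means that $p_2$ lies on the open vertical ray above $p_1$ (same $x$-coordinate, larger $y$-coordinate); it does \emph{not} mean that $p_2$ has larger $y$-coordinate in general. Thus property~(3) only guarantees that the vertical ray above a degree-two vertex is free of other vertices, and property~(4) that the N and NW rays from a degree-one vertex are free. Neither confines the rest of the drawing to a half-plane. Consequently, your claim that ``the remainder of $G_i$ sits in the strict southern half-plane below $v$'' (and likewise ``stays below the horizontal line through $v$'') is unfounded, and with it the whole reassembly step: pieces meeting at $v$, or along the virtual segment $uw$, may sprawl over overlapping regions of the plane, and scaling about a shared point does nothing to separate them angularly. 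The ``narrow angular sector'' and ``thin angular strip'' assertions are simply false for the drawings Theorem~\ref{slopenum2} produces. (There is a secondary issue in the two-vertex case: if all three neighbours of $u$ lie in a single component $H_i$, adding a virtual edge $uw$ gives $u$ degree four in $G_i$; this can be sidestepped by noting that $w$ is then a cut-vertex, but you do not say so.)

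The paper avoids all of this by a one-line reduction to Claim~\ref{edge_conn}. In a cubic graph a cut-vertex $v$ has its three incident edges distributed among at least two components of $G-v$, so some component receives exactly one of them---that edge is a bridge. Likewise, a two-vertex separator $\{u,w\}$ forces a two-edge disconnecting set (e.g.\ the cut between $\{u\}\cup H_1$ and $\{w\}\cup H_2$ when the edge counts are balanced, and a direct count otherwise). Claim~\ref{edge_conn} then handles both situations: draw the two subcubic components separately via Theorem~\ref{slopenum2}, rotate one by $\pi$, and translate them far apart so the cut-edges become long vertical segments. The point is that removing \emph{edges} rather than \emph{vertices} lets the pieces live arbitrarily far from one another, so that only the ray-freeness of property~(3) at the endpoints is needed and overlap is a non-issue.
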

\begin{proof} 
If the graph has a cut-vertex, then it has a bridge. If it has a two-vertex disconnecting set, then it has a two-edge disconnecting set. In both cases we can then invoke Claim \ref{edge_conn} to draw the graph with four slopes.
\end{proof}

\begin{remark}\label{conn_remark}
A consequence of the above discussion is that any cubic graph that cannot be drawn with the four basic slopes (N,E,NE,NW) must be three vertex and edge connected.
\end{remark}

\begin{claim}\label{triangle_free}
Any cubic graph with a triangle can be drawn with four slopes%\footnote{We would like to thank our Referees for the valuable suggestions in making the proof of this Claim complete. }.
\end{claim}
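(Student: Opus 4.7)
The plan is to proceed by induction on $|V(G)|$: contract the triangle to a single vertex, draw the smaller graph using either the inductive hypothesis or Theorem~\ref{subthm_4slopes}, and then reinsert a small triangle at the location of the contracted vertex.

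Let $T=\{a,b,c\}$ be a triangle in $G$, and let $a',b',c'$ be the unique neighbors of $a,b,c$ outside $T$. I would first dispose of the degenerate configurations, namely when two of $a',b',c'$ coincide or when some $a'$ is adjacent to two vertices of $T$. In these cases $G$ contains either $K_4$ or a ``bow-tie'' (two triangles sharing an edge) as a subgraph, and the resulting short list of possibilities can be drawn directly with the four basic slopes by explicit construction or by a small gadget replacement.

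In the main case, where $a',b',c'$ are distinct and each is adjacent to exactly one vertex of $T$, I contract $T$ to a single vertex $v$ adjacent to $a',b',c'$, obtaining a simple cubic graph $G'$ with $|V(G)|-2$ vertices. If $G'$ still contains a triangle, I apply the inductive hypothesis (Claim~\ref{triangle_free} at smaller $|V|$); if $G'$ is triangle-free, I apply Theorem~\ref{subthm_4slopes}. Either way, $G'$ has a straight-line drawing using four basic slopes, and the three edges incident to $v$ lie on three concurrent lines $L_1,L_2,L_3$ through $v$, each carrying one of the four basic slopes.

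To reverse the contraction, I would replace $v$ by a small triangle $\tilde a\tilde b\tilde c$ with $\tilde a\in L_1$, $\tilde b\in L_2$, $\tilde c\in L_3$, positioned on the correct half-lines so that the edges $\tilde a a',\tilde b b',\tilde c c'$ are subsegments of $va',vb',vc'$ and hence preserve the same basic slopes. For each of the $\binom{4}{3}=4$ possible triples of basic slopes on $L_1,L_2,L_3$, one can parametrize $\tilde a=v+t_1\vec u_1$, $\tilde b=v+t_2\vec u_2$, $\tilde c=v+t_3\vec u_3$ and solve the three linear conditions that $\tilde a\tilde b,\tilde b\tilde c,\tilde c\tilde a$ each carry one of the basic slopes; each triple yields an explicit inscribed triangle, unique up to positive scaling. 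Choosing the scale small enough that the new triangle fits inside a disk around $v$ disjoint from the rest of the drawing of $G'$ completes the construction.

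The main obstacle is this inscribed-triangle step: for each configuration of three concurrent basic-slope lines through $v$ and each half-line combination for the three outgoing edges, one must verify both that a suitable inscribed triangle exists and that at each of $\tilde a,\tilde b,\tilde c$ the outgoing edge to the corresponding $a',b',c'$ does not coincide in direction with either of the two triangle edges meeting there. Once this finite case analysis is carried out, the rest of the argument---degenerate cases, contraction, induction, and rescaling---goes through routinely.
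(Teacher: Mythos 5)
Your overall strategy---contract the triangle, draw the smaller cubic graph by induction or by Theorem~\ref{subthm_4slopes}, and reinsert a small triangle near the contracted vertex $v$---is the same as the paper's, but your reinsertion gadget has a genuine gap. You require a triangle $\tilde a\tilde b\tilde c$ with one vertex on \emph{each} of the three rays from $v$ toward $a',b',c'$ and with all three sides carrying basic slopes, and you assert that a finite case analysis over the $\binom{4}{3}$ slope triples will verify existence. It will not: existence depends on the rays (half-lines), not just on the triple of slopes, and for some configurations no such triangle exists. For instance, if the three edges at $v$ leave toward the East, the Northeast and the Northwest, then any segment joining a point $(t_1,0)$ on the East ray to a point $(-t_3,t_3)$ on the Northwest ray has direction $(-t_3-t_1,\,t_3)$ with $t_1,t_3>0$, hence slope strictly between $-1$ and $0$; it can never carry a basic slope, so your inscribed triangle cannot be completed at any scale. (A smaller point: when the drawing of $G'$ comes from Theorem~\ref{subthm_4slopes}, the four slopes are not the basic ones, so the gadget must be formulated for an arbitrary four-slope set anyway.)

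The paper's gadget avoids exactly this trap: it keeps one triangle vertex at the position of $v$ (that vertex retains one of the three original edges), slides only two vertices out along a suitably chosen pair of the incident edges, and joins them by a short segment whose slope is the one \emph{unused} at $v$. One then only needs a single pair of rays whose cone of connecting directions contains the unused slope, which can always be arranged, whereas your version forces a side between every pair of rays. Separately, your dismissal of the degenerate cases (two triangle vertices sharing an outside neighbour, or two triangles sharing an edge) as a ``short list of possibilities'' drawable by explicit construction is not accurate: apart from $K_4$ these form infinite families, since the remainder of the graph is arbitrary; the paper handles them by noting that such a graph has a two-vertex (hence two-edge) disconnecting set and invoking Claims~\ref{edge_conn} and~\ref{vert_conn}. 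With the gadget replaced by the paper's (or any gadget not insisting on a side between every pair of rays) and the degenerate cases routed through the connectivity claims, your induction does go through.
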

\begin{proof}
First we note that by using the above claims, we may assume that we only consider cubic graphs in which all triangles are connected to the rest of the graph by vertex disjoint edges. If not, then the graph is either $K_4$ or has a two-vertex disconnecting set. A $K_4$ can be drawn using the vertices of a square. In the later case, we can draw the graph with four slopes using Claim \ref{vert_conn}.

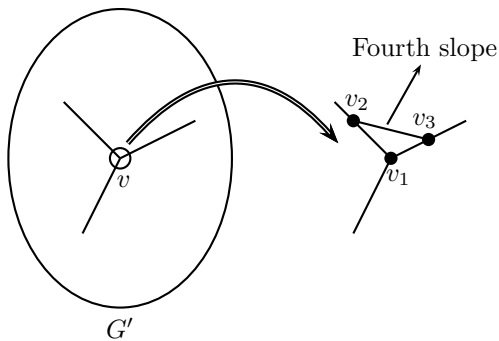
\begin{figure}[h!]
  \begin{center}
    \psset{xunit=.5cm,yunit=.5cm,dotsize=5pt}
    \begin{pspicture}(0,0)(20,8)

	\psellipse[fillcolor=lightgray](4,4)(3,4)
	
	%\psline
	\psline(2.5,5.5)(4,4)
	\psline(3,2)(4,4)
	\psline(6,5)(4,4)

	\psellipse(4,4)(0.3,0.3)

	\parabola[doubleline=true,doublesep=0.015]{->}(4.2,4.4)(7,6)

	\rput(4,-0.5){$G'$}
	\rput(4.1,3.4){$v$}

	\psline(9.7,5.5)(11.2,4)
	\psline(10.2,2)(11.2,4)
	\psline(13.2,5)(11.2,4)
	\psline(10.2,5)(12.2,4.5)

	\psdot(11.2,4)
	\psdot(10.2,5)
	\psdot(12.2,4.5)

	\rput(11.4,3.5){$v_1$}
	\rput(10.3,5.5){$v_2$}
	\rput(12.1,5){$v_3$}

	\psline{->}(11.1,4.9)(12,6.5)
	\rput(12.1,6.9){Fourth slope}

    \end{pspicture}

  \end{center}
  \caption[Adding a triangle]{Adding the triangle to the drawing of $G'$ with four slopes. \label{figure:drawingp}}
\end{figure}

We now prove the claim by contradiction. Suppose there exist cubic graphs with triangles that cannot be drawn with four slopes. By the preceding discussion all triangles in these graphs are necessarily connected to the graphs with vertex-disjoint edges. Of all such graphs consider the one with minimum number of vertices, say $G$. The graph $G'$ obtained by contracting the edges of the triangle $\{v_1,v_2,v_3\}$ is also cubic and has fewer vertices. Either all triangles in $G'$ are connected to the rest of $G'$ with vertex-disjoint edges, in which case we invoke the minimality of $G$ to conclude that $G'$ can be drawn with $4$ slopes (note: here the method of drawing the graph is unknown. We just know there exists a drawing of $G'$ with four slopes). Or, some triangles in $G'$ could be connected to the rest of $G'$ with edges that are not vertex-disjoint. Here we can use Theorem \ref{slopenum2} and the argument of the preceding paragraph to draw $G'$ using four slopes. And lastly, $G'$ could be a triangle-free graph. In this case we use Theorem \ref{subthm_4slopes} to draw $G'$. Hence, $G'$ can always be drawn with four slopes. In $G'$, we call the vertex formed by contracting the edges of the triangle as $v$. Since there is one slope that is not used by the edges incident on $v$, we draw a segment with this slope in a very small neighborhood of $v$ as shown in the figure, to obtaining a drawing of $G$ with four slopes. This contradicts the existence of a minimal counterexample and hence all graphs with triangles can be drawn with four slopes. 

\end{proof}

\begin{remark} \label{triangle_free_remark}
We note here that the preceding Lemma also holds in stricter conditions. To be precise, if the set of basic slopes are sufficient to draw all triangle-free cubic graphs, then they are sufficient to draw all cubic graphs.
\end{remark}

\begin{remark}
It must be noted that this also gives an algorithm for drawing cubic graphs with triangles, namely, we contract triangles until we get a graph that can be drawn with either the Claims \ref{edge_conn},\ref{vert_conn},\ref{triangle_free} or Theorem \ref{slopenum2} or with our drawing strategy for triangle-free bridgeless graphs. Then we can backtrack with placing a series of edges which give us back all the contracted triangles. 
\end{remark}

%--------------------------------------------------------

\subsection{Drawing strategy}

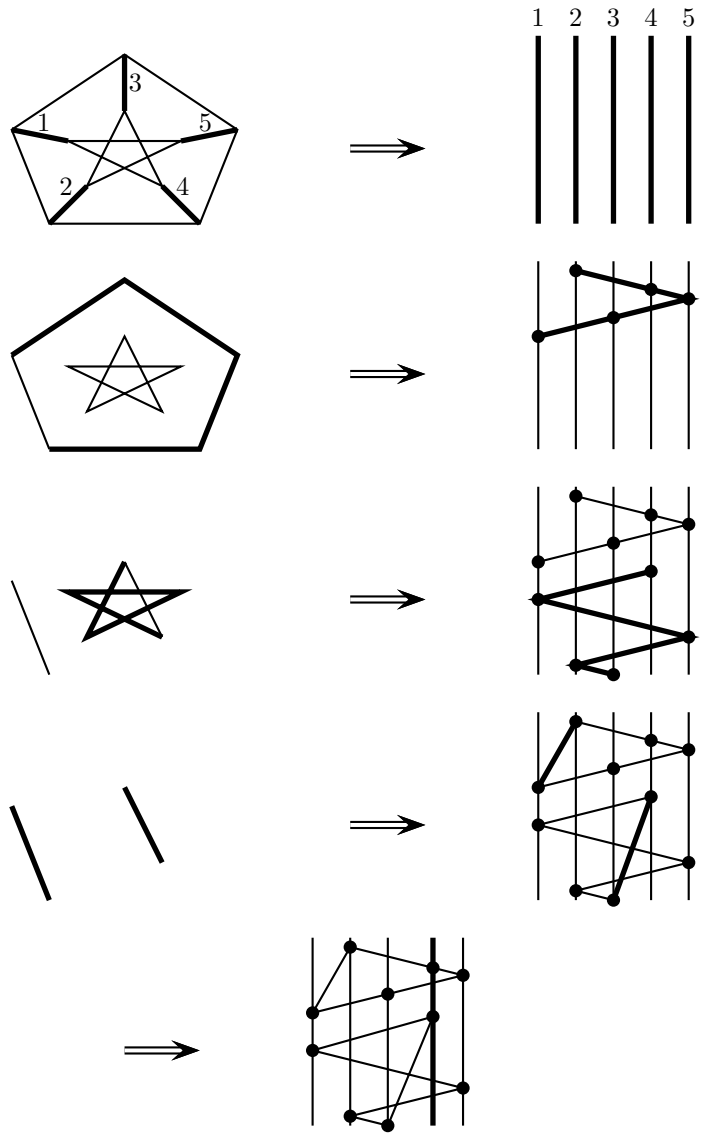
\begin{figure}[h!]
  \begin{center}
    \psset{xunit=.5cm,yunit=.5cm,dotsize=5pt}
    \begin{pspicture}(0,-6)(20,23)

	\pspolygon(0,20.5)(3,22.5)(6,20.5)(5,18)(1,18)
	\pspolygon(1.5,20.2)(4.5,20.2)(2,19)(3,21)(4,19)
	\psline[linewidth=2pt](3,21)(3,22.5)
	\rput(3.29,21.75){3}
	\psline[linewidth=2pt](0,20.5)(1.5,20.2)
	\rput(0.85,20.7){1}
	\psline[linewidth=2pt](1,18)(2,19)
	\rput(1.45,19){2}
	\psline[linewidth=2pt](5,18)(4,19)
	\rput(4.55,19){4}
	\psline[linewidth=2pt](4.5,20.2)(6,20.5)
	\rput(5.15,20.7){5}
	
	\psline[doubleline=true,doublesep=1.5pt]{->}(9,20)(11,20)

	\psline[linewidth=2pt](14,18)(14,23)
	\rput(14,23.5){1}
	\psline[linewidth=2pt](15,18)(15,23)
	\rput(15,23.5){2}
	\psline[linewidth=2pt](16,18)(16,23)
	\rput(16,23.5){3}
	\psline[linewidth=2pt](17,18)(17,23)
	\rput(17,23.5){4}
	\psline[linewidth=2pt](18,18)(18,23)
	\rput(18,23.5){5}

%		STEP 2 

	\psline[linewidth=2pt](0,14.5)(3,16.5)(6,14.5)(5,12)(1,12)
	\psline(0,14.5)(1,12)
	\pspolygon(1.5,14.2)(4.5,14.2)(2,13)(3,15)(4,13)

	\psline[doubleline=true,doublesep=1.5pt]{->}(9,14)(11,14)

	\psline(14,12)(14,17)
	\psline(15,12)(15,17)
	\psline(16,12)(16,17)
	\psline(17,12)(17,17)
	\psline(18,12)(18,17)

	\psline[linewidth=2pt](14,15)(16,15.5)(18,16)(17,16.25)(15,16.75)
	\psdot(14,15)
	\psdot(16,15.5)
	\psdot(18,16)
	\psdot(17,16.25)
	\psdot(15,16.75)

%		STEP 3 

	\psline[linewidth=2pt](4,7)(1.5,8.2)(4.5,8.2)(2,7)(3,9)
	\psline(4,7)(3,9)
	\psline(0,8.5)(1,6)

	\psline[doubleline=true,doublesep=1.5pt]{->}(9,8)(11,8)

	\psline(14,6)(14,11)
	\psline(15,6)(15,11)
	\psline(16,6)(16,11)
	\psline(17,6)(17,11)
	\psline(18,6)(18,11)

	\psline(14,9)(16,9.5)(18,10)(17,10.25)(15,10.75)
	\psline[linewidth=2pt](16,6)(15,6.25)(18,7)(14,8)(17,8.75)
	\psdot(14,9)
	\psdot(16,9.5)
	\psdot(18,10)
	\psdot(17,10.25)
	\psdot(15,10.75)
	\psdot(16,6)
	\psdot(15,6.25)
	\psdot(18,7)
	\psdot(14,8)
	\psdot(17,8.75)

%		STEP 4 

	\psline[linewidth=2pt](4,1)(3,3)
	\psline[linewidth=2pt](0,2.5)(1,0)

	\psline[doubleline=true,doublesep=1.5pt]{->}(9,2)(11,2)

	\psline(14,0)(14,5)
	\psline(15,0)(15,5)
	\psline(16,0)(16,5)
	\psline(17,0)(17,5)
	\psline(18,0)(18,5)

	\psline(14,3)(16,3.5)(18,4)(17,4.25)(15,4.75)
	\psline(16,0)(15,0.25)(18,1)(14,2)(17,2.75)
	\psline[linewidth=2pt](14,3)(15,4.75)
	\psline[linewidth=2pt](16,0)(17,2.75)
	\psdot(14,3)
	\psdot(16,3.5)
	\psdot(18,4)
	\psdot(17,4.25)
	\psdot(15,4.75)
	\psdot(16,0)
	\psdot(15,0.25)
	\psdot(18,1)
	\psdot(14,2)
	\psdot(17,2.75)

%		STEP 5 

	\psline[doubleline=true,doublesep=1.5pt]{->}(3,-4)(5,-4)

	\psline(8,-6)(8,-1)
	\psline(9,-6)(9,-1)
	\psline(10,-6)(10,-1)
	\psline[linewidth=2pt](11.2,-6)(11.2,-1)
	\psline(12,-6)(12,-1)

	\pspolygon(8,-3)(10,-2.5)(12,-2)(11.2,-1.8)(9,-1.25)
	\pspolygon(10,-6)(9,-5.75)(12,-5)(8,-4)(11.2,-3.1)
	\psdot(8,-3)
	\psdot(10,-2.5)
	\psdot(12,-2)
	\psdot(11.2,-1.8)
	\psdot(9,-1.25)
	\psdot(10,-6)
	\psdot(9,-5.75)
	\psdot(12,-5)
	\psdot(8,-4)
	\psdot(11.2,-3.1)

	%\psline(0,0)(0,10)
	%\psline(1.4,0)(1.4,10)
	%\psline(1.9,0)(1.9,10)
	%\psline(2.6,0)(2.6,10)
	%\psline(4.1,0)(4.1,10)
	%\psline(5.8,0)(5.8,10)
	%\psline(7,0)(7,10)
%
	%\psdot[dotstyle=o,dotscale=1.75](1.4,1.0)
	%\psdot(1.4,1.0)
	%\psdot(0,1.35)
	%\psdot(2.6,2.0)
	%\psdot(7,3.1)
	%\psdot(2.6,4.2)
	%\psdot[dotstyle=o,dotscale=1.75](1.9,4.375)
	%\psdot(1.9,4.375)
%
	%%\psdot[dotstyle=o,dotscale=1.75](5.8,5)
	%\psdot(5.8,5)
	%\psdot(1.4,6.1)
	%\psdot(0,6.45)
	%\psdot(1.9,6.925)
	%\psdot(4.1,7.45)
	%\psdot(5.8,7.9)
	%\psdot(4.1,8.325)
	%\psdot[dotstyle=o,dotscale=1.75](7,9.05)
	%\psdot(7,9.05)

	%\psline(1.4,1.0)(0,1.35)(2.6,2.0)(7,3.1)(2.6,4.2)(1.9,4.375)

	%\psline(5.8,5)(1.4,6.1)(0,6.45)(1.9,6.925)(4.1,7.45)(5.8,7.9)(4.1,8.325)(7,9.05)

	%\psline[linecolor=red,linewidth=1.2pt](1.4,1.0)(1.9,4.375)
	%\psline[linecolor=blue,linewidth=1.2pt](5.8,5)(7,9.05)

	%\psline[linewidth=1.2pt](0,1.35)(0,6.45)
	%\psline[linewidth=1.2pt](1.4,1.0)(1.4,6.1)
	%\psline[linewidth=1.2pt](1.9,4.375)(1.9,6.925)
	%\psline[linewidth=1.2pt](2.6,2.0)(2.6,4.2)
	%\psline[linewidth=1.2pt](4.1,7.45)(4.1,8.325)
	%\psline[linewidth=1.2pt](5.8,5)(5.8,7.9)
	%\psline[linewidth=1.2pt](7,3.1)(7,9.05)
%
    \end{pspicture}

  \end{center}
  \caption{Process of drawing the cycles. \label{figure:drawingp}}
\end{figure}

Because of the above claims, we would now only focus on graphs that are bridgeless and triangle-free. Since the graph is bridgeless, Petersen's theorem implies that it has a matching. We fix the slope of all the edges in the matching to be $\pi/2$ so that they all lie on (distinct) vertical lines (Figure~\ref{figure:drawingp}). If this matching is removed, then the graph consists of disjoint cycles. Next we isolate one special edge from each cycle. Our method of drawing the graph with four slopes then is as follows: For each cycle, remove the selected edge and draw the remaining path by going between corresponding vertical lines of the cycle alternating with slopes $\pi/4,3\pi/4$ depending on whether we draw the edges with increasing/decreasing $x$-coordinate. This ensures that the cycles all grow upwards. Since we have the freedom to place the cycles where we want, we place them vertically on the matching so that they are very far apart (non-intersecting). Also, if the special edge of each cycle was between adjacent vertical lines then this edge would not pass through any other vertex of the graph either. Then, the only thing we would need is that the final edge in each cycle is drawn with the same slope. Figure~\ref{figure:drawingp} illustrates this and the next remark is followed by a formal description of the problem.

\begin{remark} In \cite{eng} a similar strategy of drawing the matching on vertical lines was employed. However, the cycles were drawn with alternating $\pi/4, 3\pi/4$ slopes for adjacent edges, so that the cycles were not ``growing upwards'' as in our construction. It leads to a different algebraic formulation of the problem giving tight bounds for the case when the cubic graph contains a Hamiltonian cycle.
\end{remark}

Let $M$ be a matching in $G$. Each cycle $C$ in $E(G)\setminus M$
can be represented as a cyclic sequence $C = (v_{1},\ldots,v_{k})$,
where each $v_{i}$ is an element of $M$. The sequence represents the elements of $M$ as we go around the cycle.
We can assume (by Claim \ref{triangle_free}) that
$k\ge 4$. An {\em edge} of $C$ by definition
is $(v_{i},v_{i+1})$ (all indices are understood mod $k$), which is 
although formally 
a pair formed by two distinct elements of $M$, also 
corresponds to an actual edge of the cycle.
Notice that each element of $M$ is either shared by two cycles
or occurs twice in a single cycle.

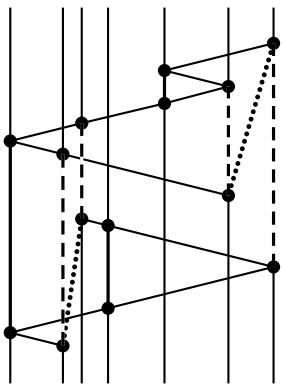
\begin{figure}[h!]
  \begin{center}
    \psset{xunit=.5cm,yunit=.5cm,dotsize=5pt}
    \begin{pspicture}(0,0)(7,10)
	\psline(0,0)(0,10)
	\psline(1.4,0)(1.4,10)
	\psline(1.9,0)(1.9,10)
	\psline(2.6,0)(2.6,10)
	\psline(4.1,0)(4.1,10)
	\psline(5.8,0)(5.8,10)
	\psline(7,0)(7,10)

	%\psdot[dotstyle=o,dotscale=1.75](1.4,1.0)
	\psdot(1.4,1.0)
	\psdot(0,1.35)
	\psdot(2.6,2.0)
	\psdot(7,3.1)
	\psdot(2.6,4.2)
	%\psdot[dotstyle=o,dotscale=1.75](1.9,4.375)
	\psdot(1.9,4.375)

	%\psdot[dotstyle=o,dotscale=1.75](5.8,5)
	\psdot(5.8,5)
	\psdot(1.4,6.1)
	\psdot(0,6.45)
	\psdot(1.9,6.925)
	\psdot(4.1,7.45)
	\psdot(5.8,7.9)
	\psdot(4.1,8.325)
	%\psdot[dotstyle=o,dotscale=1.75](7,9.05)
	\psdot(7,9.05)

	\psline(1.4,1.0)(0,1.35)(2.6,2.0)(7,3.1)(2.6,4.2)(1.9,4.375)

	\psline(5.8,5)(1.4,6.1)(0,6.45)(1.9,6.925)(4.1,7.45)(5.8,7.9)(4.1,8.325)(7,9.05)

	\psline[linestyle=dotted,dotsep=1pt,linewidth=1.9pt](1.4,1.0)(1.9,4.375)
	\psline[linestyle=dotted,dotsep=1pt,linewidth=1.9pt](5.8,5)(7,9.05)

	%\psline[linewidth=1.5pt](1.4,1.0)(1.9,4.375)
	%\psline[linewidth=1.5pt](5.8,5)(7,9.05)

	\psline[linewidth=1.2pt](0,1.35)(0,6.45)
	\psline[linewidth=1.2pt,linecolor=white](1.4,1.0)(1.4,6.1)
	\psline[linewidth=1.2pt,linestyle=dashed](1.4,1.0)(1.4,6.1)
	\psline[linewidth=1.2pt,linecolor=white](1.9,4.375)(1.9,6.925)
	\psline[linewidth=1.2pt,linestyle=dashed](1.9,4.375)(1.9,6.925)
	\psline[linewidth=1.2pt](2.6,2.0)(2.6,4.2)
	\psline[linewidth=1.2pt](4.1,7.45)(4.1,8.325)
	\psline[linewidth=1.2pt,linecolor=white](5.8,5)(5.8,7.9)
	\psline[linewidth=1.2pt,linestyle=dashed](5.8,5)(5.8,7.9)
	\psline[linewidth=1.2pt,linecolor=white](7,3.1)(7,9.05)
	\psline[linewidth=1.2pt,linestyle=dashed](7,3.1)(7,9.05)

    \end{pspicture}

  \end{center}
  \caption[Distinguished edges and cycles]{Distinguished ``matching-edges'' of Figure 1 are represented by dashed lines while distinguished cycle-edges are represented by dotted lines.\label{figure:distp}}
\end{figure}

We now want to pick a distinguished edge (as in Figure~\ref{figure:distp})
$(v_{i},v_{i+1})$ in $C$ (and in other cycles) such that the 
set of distinguished cycle-edges will satisfy certain
properties. 
\linebreak

Notation: Each distinguished cycle-edge is adjacent with two edges from the matching. These would be called the distinguished matching-edges of the cycle. In particular, the collection of 
distinguished edges from all cycles
form the set of {\em distinguished matching-edges}. 
We would hope that distinguished matching-edges corresponding to a distinguished cycle-edge
can be drawn as adjacent vertical lines for all cycles so that this
would naturally enforce that the distinguished cycle-edge would not go through 
any other vertex of the graph.

\begin{definition}
Two cycles are {\em connected} if 
they share a distinguished matching-edge, and two cycles
{\em belong to the same component} if  they can be reached one from another 
by going through connected cycles. (An alternate way of looking at this would be that two cycles are adjacent iff the sets of distinguished matching-edges corresponding to the two cycles have a non-empty intersection). 
In other words, we define a graph on the cycles
that we call the {\bf cycle-connectivity graph.}
Notice that in this graph each cycle can have at most two neighbors,
thus the graph is a union of paths and cycles.
The set of distinguished matching-edges associated with 
the component where cycle $C$ belongs is denoted by $D(C)$.
(Clearly, if $C_{1}$ and $C_{2}$ belong to the same 
component, then $D(C_{1}) = D(C_{2})$).
\end{definition}

\begin{figure}[h!]
  \begin{center}
    \psset{xunit=.5cm,yunit=.5cm,dotsize=5pt}
    \begin{pspicture}(0,0)(12,12)
	\psline(0,0)(0,12)
	\psline(1,0)(1,12)
	\psline(1.5,0)(1.5,12)
	\psline(2,0)(2,12)
	\psline(3.2,0)(3.2,12)
	\psline(3.6,0)(3.6,12)
	\psline(5.2,0)(5.2,12)
	\psline(6,0)(6,12)

	\psdot(0,1.0)
	\psdot(3.6,1.9)
	\psdot(6,2.5)
	\psdot(1,3.75)

	\psdot(1,5)
	\psdot(0,5.25)
	\psdot(2,5.75)
	\psdot(1.5,5.875)

	\psdot(1.5,7)
	\psdot(3.6,7.525)
	\psdot(3.2,7.625)
	\psdot(2,7.925)

	\psdot(5.2,9.2)
	\psdot(3.2,9.7)
	\psdot(5.2,10.2)
	\psdot(6,10.5)

	\psline(0,1.0)(3.6,1.9)(6,2.5)(1,3.75)
	\rput(4.4,3.3){C1}

	\psline(1,5)(0,5.25)(2,5.75)(1.5,5.875)
	\rput(0.5,5.8){C2}

	\psline(1.5,7)(3.6,7.525)(3.2,7.625)(2,7.925)
	\rput(2.6,8.2){C3}

	\psline(5.2,9.2)(3.2,9.7)(5.2,10.2)(6,10.5)
	\rput(4.4,10.4){C4}

	\psline[linestyle=dotted,dotsep=1pt,linewidth=1.9pt](0,1.0)(1,3.75)
	\psline[linestyle=dotted,dotsep=1pt,linewidth=1.9pt](1,5)(1.5,5.875)
	\psline[linestyle=dotted,dotsep=1pt,linewidth=1.9pt](1.5,7)(2,7.925)
	\psline[linestyle=dotted,dotsep=1pt,linewidth=1.9pt](5.2,9.2)(6,10.5)

	\psline[linewidth=1.2pt](0,1.0)(0,5.25)
	\psline[linewidth=1.2pt](1,3.75)(1,5)
	\psline[linewidth=1.2pt](1.5,5.875)(1.5,7)
	\psline[linewidth=1.2pt](2,5.75)(2,7.925)
	\psline[linewidth=1.2pt](3.2,7.625)(3.2,9.7)
	\psline[linewidth=1.2pt](3.6,1.9)(3.6,7.525)
	\psline[linewidth=1.2pt](5.2,9.2)(5.2,10.2)
	\psline[linewidth=1.2pt](6,2.5)(6,10.5)

	\psdot(9,2)
	\rput(9,2.5){C1}
	\psdot(10,1)
	\rput(10,0.5){C2}
	\psdot(11,2)
	\rput(11,2.5){C3}
	\psdot(12,0.5)
	\rput(12,1.0){C4}

	\psline(9,2)(10,1)(11,2)

    \end{pspicture}

  \end{center}
  \caption{Graph and its connectivity graph.\label{figure:conng}}
\end{figure}
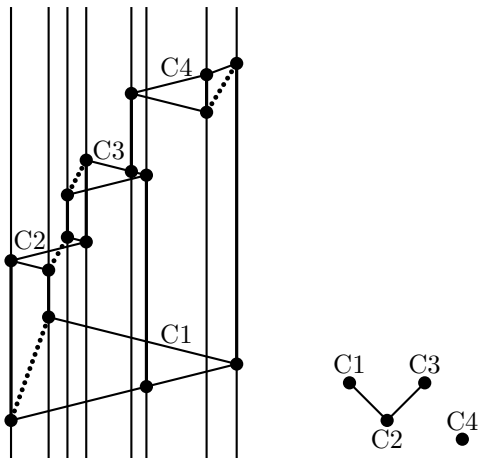

\begin{remark}
We note that in the cycle-connectivity graph two cycles 
are not necessarily connected if they share a matching-edge but only 
if they share a distinguished matching-edge. We can define 
another graph, where two cycles are connected if 
they share any matching-edge. It is easy to see
that $G$ is connected iff the latter graph is connected.
\end{remark}

\begin{remark}
We also note that we may get a multigraph for the cycle-connectivity graph in the event that two cycles pick distinguished cycle-edges between the same set of matching-edges. Condition I below avoids that scenario also. 
\end{remark}

\medskip

\noindent{\bf Condition I:} 
The cycle-connectivity graph does not contain cycles (only paths).
Equivalently,
we can enumerate the distinguished
matching-edges associated with the cycles of a component in
some linear order 
 $y_{1},\ldots, y_{l}$
in such a way that the pairs of consecutive
matching-edges of this order are exactly the distinguished cycle-edges
associated with the cycles in the component.
\medskip

\noindent{\bf Condition II:} 
In each component there is at most one cycle $C$ such that
$C\subseteq D(C)$.

\medskip

Assume that the lines of the matching are ordered $v_1,...,v_n$. From Condition I, we can ensure that every distinguished cycle-edge takes up two adjacent lines in this ordering. A drawing of these lines would be completely determined by the distance between consecutive lines. If $v_i,v_{i+1}$ form a distinguished cycle-edge of the $k^{th}$ cycle, then call the distance between these lines $x_k$. Otherwise fix this distance to be some arbitrary positive constant $c_i$. This is illustrated in Figure~\ref{figure:defxc}.

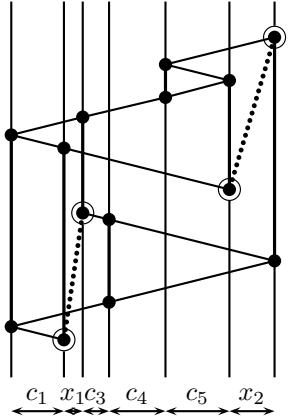
\begin{figure}[h!]
  \begin{center}
    \psset{xunit=.5cm,yunit=.5cm,dotsize=5pt}
    \begin{pspicture}(0,0)(7,10)
	\psline(0,0)(0,10)
	\psline(1.4,0)(1.4,10)
	\psline(1.9,0)(1.9,10)
	\psline(2.6,0)(2.6,10)
	\psline(4.1,0)(4.1,10)
	\psline(5.8,0)(5.8,10)
	\psline(7,0)(7,10)

	\psline{<->}(0,-0.9)(1.4,-0.9)
	\rput(0.7,-0.5){$c_1$}
	\psline{<->}(1.4,-0.9)(1.9,-0.9)
	\rput(1.65,-0.5){$x_1$}
	\psline{<->}(1.9,-0.9)(2.6,-0.9)
	\rput(2.25,-0.5){$c_3$}
	\psline{<->}(2.6,-0.9)(4.1,-0.9)
	\rput(3.35,-0.5){$c_4$}
	\psline{<->}(4.1,-0.9)(5.8,-0.9)
	\rput(4.95,-0.5){$c_5$}
	\psline{<->}(5.8,-0.9)(7,-0.9)
	\rput(6.4,-0.5){$x_2$}

	\psdot[dotstyle=o,dotscale=1.75](1.4,1.0)
	\psdot(1.4,1.0)
	\psdot(0,1.35)
	\psdot(2.6,2.0)
	\psdot(7,3.1)
	\psdot(2.6,4.2)
	\psdot[dotstyle=o,dotscale=1.75](1.9,4.375)
	\psdot(1.9,4.375)

	\psdot[dotstyle=o,dotscale=1.75](5.8,5)
	\psdot(5.8,5)
	\psdot(1.4,6.1)
	\psdot(0,6.45)
	\psdot(1.9,6.925)
	\psdot(4.1,7.45)
	\psdot(5.8,7.9)
	\psdot(4.1,8.325)
	\psdot[dotstyle=o,dotscale=1.75](7,9.05)
	\psdot(7,9.05)

	\psline(1.4,1.0)(0,1.35)(2.6,2.0)(7,3.1)(2.6,4.2)(1.9,4.375)

	\psline(5.8,5)(1.4,6.1)(0,6.45)(1.9,6.925)(4.1,7.45)(5.8,7.9)(4.1,8.325)(7,9.05)

	\psline[linestyle=dotted,dotsep=1pt,linewidth=1.9pt](1.4,1.0)(1.9,4.375)
	\psline[linestyle=dotted,dotsep=1pt,linewidth=1.9pt](5.8,5)(7,9.05)

	\psline[linewidth=1.2pt](0,1.35)(0,6.45)
	\psline[linewidth=1.2pt](1.4,1.0)(1.4,6.1)
	\psline[linewidth=1.2pt](1.9,4.375)(1.9,6.925)
	\psline[linewidth=1.2pt](2.6,2.0)(2.6,4.2)
	\psline[linewidth=1.2pt](4.1,7.45)(4.1,8.325)
	\psline[linewidth=1.2pt](5.8,5)(5.8,7.9)
	\psline[linewidth=1.2pt](7,3.1)(7,9.05)

    \end{pspicture}

  \end{center}
  \caption{Definition of variables $x_i$ and $c_i$. \label{figure:defxc}}
\end{figure}

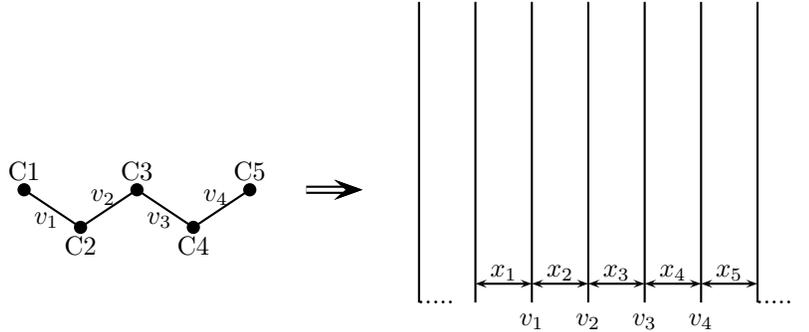
\begin{figure}[h!]
  \begin{center}
    \psset{xunit=0.75cm,yunit=.5cm,dotsize=5pt}
    \begin{pspicture}(0,0)(14,10)
	\psdot(0,5)
	\rput(0,5.5){C1}
	\psdot(1,4)
	\rput(1,3.5){C2}
	\psdot(2,5)
	\rput(2,5.5){C3}
	\psdot(3,4)
	\rput(3,3.5){C4}
	\psdot(4,5)
	\rput(4,5.5){C5}

	\psline(0,5)(1,4)(2,5)(3,4)(4,5)
	\rput(0.4,4.2){$v_1$}
	\rput(1.4,4.8){$v_2$}
	\rput(2.4,4.2){$v_3$}
	\rput(3.4,4.8){$v_4$}

	\psline[doubleline=true,doublesep=1.5pt]{->}(5,5)(6,5)

	\psline(7,2)(7,10)
	\rput(7.3,2){.....}
	\psline(8,2)(8,10)
	\psline(9,2)(9,10)
	\rput(9,1.5){$v_1$}
	\psline(10,2)(10,10)
	\rput(10,1.5){$v_2$}
	\psline(11,2)(11,10)
	\rput(11,1.5){$v_3$}
	\psline(12,2)(12,10)
	\rput(12,1.5){$v_4$}
	\psline(13,2)(13,10)
	\rput(13.3,2){.....}
	\psline(14,2)(14,10)

	\psline{<->}(8,2.5)(9,2.5)
	\rput(8.5,2.8){$x_1$}
	\psline{<->}(9,2.5)(10,2.5)
	\rput(9.5,2.8){$x_2$}
	\psline{<->}(10,2.5)(11,2.5)
	\rput(10.5,2.8){$x_3$}
	\psline{<->}(11,2.5)(12,2.5)
	\rput(11.5,2.8){$x_4$}
	\psline{<->}(12,2.5)(13,2.5)
	\rput(12.5,2.8){$x_5$}
`

    \end{pspicture}

  \end{center}
  \caption[Cycle connectivity graph]{Paths of cycles will have adjacent distinguished cycle-edges in the drawing (because of the distinguished matching-edge they share). Hence it is necessary to not have cycles in the connectivity graph. \label{figure:condI}}
\end{figure}

Now draw a cycle by starting at one of the distinguished matching-edges and first drawing the path obtained by removing the distinguished cycle-edge. If an edge of the cycle is $v_k,v_l$ where $k<l$ then use a slope of $\pi/4$ and $3\pi/4$ otherwise. Notice that the vertical distance traveled across this edge is equal to the distance between the lines $v_k$ and $v_l$. Hence the slope of the distinguished cycle-edge would look like 
$g_{i} = {\LL_{i}(\x)
\over x_{i}}$ 
where $\LL_{i}(\x)=a_{i,0} + \sum_{j=1}^{n} a_{i,j}x_{j}$ for $1\le i \le m$ ($m$ being the number of cycles)
is a linear equation on $\x$ with non-negative coefficients. We will use the following Solvability Theorem to ensure that these slopes can always be matched. This will be proved in the next subsection. 

%To prove theorem \ref{subthm_4slopes}, we prove the following, which is technical and not hard. 
\begin{theorem}\label{generalt}
Let
$\LL_{i}(\x)=a_{i,0} + \sum_{j=1}^{n} a_{i,j}x_{j}$ for $1\le i \le n$ 
be linear forms, such that 
all coefficients are non-negative. 
Define a directed graph, $\G = \G(\overline{\LL})$ with 
vertex set $V(\G) = \{0,1,\ldots, n\}$ and edge
set $E(\G) = \{(j,i)\;\mid\; a_{i,j} \neq 0\}$.
Let $g_{i} = {\LL_{i}(\x)
\over x_{i}}$ for $1\le i \le n$. Assume that 
in $\G(\overline{\LL})$ every node can be reached from $0$. Then
\begin{equation}\label{maineq2}
g_{1}(\x) = g_{2}(\x) = \cdots = g_{n}(\x)
\end{equation}
has an all-positive solution.
\end{theorem}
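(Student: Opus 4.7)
The plan is to reduce the system (\ref{maineq2}) to a single linear system by introducing the common ratio $t = g_1(\x) = \cdots = g_n(\x)$ as a parameter, and then invert the resulting matrix using a Neumann series that makes the non-negativity structure of $A$ visible.

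First, I would let $A = (a_{i,j})_{1 \le i,j \le n}$ be the coefficient matrix (entries with $j \ge 1$ only) and let $\mathbf{a}_0 = (a_{1,0},\ldots,a_{n,0})^T$. The system $g_i(\x) = t$ for $1 \le i \le n$ is then equivalent to
\begin{equation}
(tI - A)\,\x = \mathbf{a}_0.
\end{equation}
So the theorem reduces to producing a real number $t > 0$ and a positive vector $\x$ solving this equation.

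Next, I would choose $t$ larger than any reasonable upper bound on the spectral radius $\rho(A)$ (for example, $t > \sum_{i,j} a_{i,j}$, which trivially exceeds $\rho(A)$). For such $t$, the Neumann series converges, giving
\begin{equation}
(tI - A)^{-1} \;=\; \sum_{k=0}^{\infty} \frac{A^k}{t^{k+1}},
\end{equation}
so the unique solution is
\begin{equation}
\x \;=\; \sum_{k=0}^{\infty} \frac{A^k \mathbf{a}_0}{t^{k+1}}.
\end{equation}
Since $A$ and $\mathbf{a}_0$ have non-negative entries, every term is entrywise non-negative, and hence $\x \ge 0$.

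The key step is translating strict positivity of each $x_i$ into the graph-theoretic reachability hypothesis. Expanding,
\begin{equation}
(A^k \mathbf{a}_0)_i \;=\; \sum_{j_1,\ldots,j_k} a_{i,j_1}\, a_{j_1,j_2}\cdots a_{j_{k-1},j_k}\, a_{j_k,0},
\end{equation}
and this sum is positive precisely when there exist indices $j_1,\ldots,j_k$ making every factor nonzero, i.e.\ when there is a directed path $0 \to j_k \to j_{k-1} \to \cdots \to j_1 \to i$ in $\G(\overline{\LL})$. Thus $x_i > 0$ if and only if the vertex $i$ is reachable from $0$. Since the hypothesis grants reachability from $0$ to every vertex, all coordinates of $\x$ are strictly positive, and $(t,\x)$ is the desired all-positive solution.

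The main obstacle I anticipate is bookkeeping: keeping straight the convention that the edge $(j,i)$ corresponds to the entry $a_{i,j}$, so that a walk $0 \to j_k \to \cdots \to j_1 \to i$ in $\G$ matches the product $a_{i,j_1} a_{j_1,j_2} \cdots a_{j_k,0}$ appearing in $A^k \mathbf{a}_0$. Once this indexing is nailed down, the argument is essentially a standard application of the Perron--Frobenius/M-matrix framework, specialized so that the strict positivity of the solution is characterized combinatorially rather than analytically.
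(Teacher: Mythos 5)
Your proof is correct, and it takes a genuinely different route from the paper's. You observe that once the common value $t$ of the ratios is introduced as a parameter, the system becomes the linear equation $(tI-A)\x=\mathbf{a}_0$, you pick $t$ strictly above the spectral radius of the non-negative matrix $A$, invert by the Neumann series, and read off strict positivity of each coordinate from the walk expansion of $(A^k\mathbf{a}_0)_i$, whose nonvanishing is exactly reachability of $i$ from $0$ in $\G(\overline{\LL})$ (your indexing of the path $0\to j_k\to\cdots\to j_1\to i$ is consistent with the paper's edge convention $(j,i)\in E$ iff $a_{i,j}\neq 0$). The paper instead never linearizes: it first handles the case of all-positive constant terms via Brouwer's fixed point theorem (Theorem \ref{specialth}), and in the general case rescales the variables as $x_i=\epsilon^{r(i)}x_i'$ with $r(i)=\mathrm{dist}(0,i)$, solves the limiting triangular system iteratively along the distance levels, and invokes Brouwer again for small $\epsilon$. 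What your approach buys is economy and transparency: no topology is needed, the solution is written in closed form, and you get the sharper statement that $x_i>0$ precisely when $i$ is reachable from $0$, making clear that the reachability hypothesis is exactly what the method requires. What the paper's perturbation-plus-fixed-point scheme buys is robustness: it treats the equations as a genuinely nonlinear system parameterized over a cube, an argument that would survive perturbations where the linear structure $(tI-A)\x=\mathbf{a}_0$ is not available; but for the statement as given, your M-matrix/Neumann-series argument is complete and simpler.
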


\begin{definition}
We define $r(i) = dist(0,i)$ in the above graph $\G(\overline{\LL})$ and for a cycle $C$ if the variable was $x_i$ for its distinguished cycle-edge, we would denote $r(C)$ to mean $r(i)$.
\end{definition}

\begin{theorem}
If Conditions I and II hold then we can use Theorem \ref{generalt}
to prove that every connected graph $G$ is implementable with four directions.
\end{theorem}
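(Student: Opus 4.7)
The plan is to translate the drawing problem into the slope-matching system $g_1(\x) = \cdots = g_m(\x)$ of Theorem \ref{generalt}, verify its reachability hypothesis using Conditions I and II, and thereby obtain an implementable drawing.

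First, I would formalize the algebraic setup already developed above: the slope of the distinguished cycle-edge of the $k$-th cycle is $g_k(\x) = \LL_k(\x)/x_k$, with $\LL_k(\x) = a_{k,0} + \sum_{j\ge 1} a_{k,j} x_j$ having only non-negative coefficients. The coefficient $a_{k,j}$ counts the non-distinguished edges of the $k$-th cycle whose horizontal span contains the gap $x_j$, while $a_{k,0}$ aggregates the fixed constants $c_i$ contributed by the other gaps in those spans. A common positive value of the $g_k$ provides the fourth slope, so it remains to verify that every node $k$ in $\G(\overline{\LL})$ is reachable from node $0$.

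By Condition I, the distinguished matching-edges of any connected component $\Gamma$ of the cycle-connectivity graph occupy a block of consecutive vertical lines $y_1, \ldots, y_{p+1}$; inside the block the gaps are precisely the $x$-variables associated with the cycles of $\Gamma$, while any gap immediately outside the block is a $c$-constant. Hence if a cycle $C$ in $\Gamma$ has a matching-edge outside $\{y_1, \ldots, y_{p+1}\}$, i.e. $C \not\subseteq D(C)$, then some non-distinguished edge of $C$ crosses a $c$-gap and $a_{C,0} > 0$, giving a direct edge $0 \to C$ in $\G$. By Condition II, at most one cycle $C^*$ in $\Gamma$ can have $C^* \subseteq D(C^*)$, so every cycle of $\Gamma$ other than possibly $C^*$ is reachable from $0$.

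The main obstacle is reaching $C^*$ itself. Suppose $C^* = C_{i_{s^*}}$ with $C^* \subseteq D(C^*)$. If every non-distinguished edge of $C^*$ joined vertices on $y_{s^*} \cup y_{s^*+1}$, then $V(C^*)$ would lie on at most four points (two per matching-edge); any simple cycle on at most four vertices is either a triangle or a $4$-cycle whose four vertices, together with the two then-necessarily-internal matching-edges, form a $K_4$ subgraph of $G$ by $3$-regularity. Either way a triangle arises, contradicting our reduction to triangle-free graphs (and in particular excluding the $|\Gamma|=1$ scenario a priori). Hence some non-distinguished edge of $C^*$ joins matching-edges $y_a, y_b$ with $\{a,b\} \ne \{s^*, s^*+1\}$, and its horizontal span necessarily covers some $x_{i_t}$ of $\Gamma$ with $t \ne s^*$. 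This produces an edge $i_t \to i^*$ in $\G$; since $C_{i_t}$ cannot be the special cycle by Condition II, it is reachable from $0$ by the previous paragraph, and hence so is $C^*$.

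With the reachability hypothesis of Theorem \ref{generalt} verified, it supplies a positive solution $\x$ and a common slope $g$ for all distinguished cycle-edges. Placing the cycles of the drawing stacked sufficiently far apart in the vertical direction ensures their drawings do not interfere, yielding a valid straight-line drawing of $G$ with the four slopes $\pi/2$, $\pi/4$, $3\pi/4$ and $g$.
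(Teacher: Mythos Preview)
Your proposal is correct and follows essentially the same approach as the paper's own proof. Both arguments verify reachability in $\G(\overline{\LL})$ by showing that every cycle $C$ with $C\not\subseteq D(C)$ has a nonzero constant term (hence $r(C)=1$), and that the unique possible exception $C^*$ with $C^*\subseteq D(C^*)$ must, unless $G=K_4$, contain a matching-edge $y_{p'}\notin\{y_{s^*},y_{s^*+1}\}$, forcing some other variable $x_{i_t}$ into $\LL_{C^*}$ and giving $r(C^*)=2$. Your handling of the $K_4$ case via the triangle-free reduction and your explicit justification that the crossing edge's span picks up a foreign $x$-variable are slightly more detailed than the paper's ``geometrically easy to see,'' but the structure is identical.
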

\begin{proof}
Condition I
ensures that the slope associated with the
distinguished cycle-edge of each cycle $i$ can be expressed as $g_{i}(\x)$
(as we have seen).
Condition II is sufficient for the reachability condition (for $\G$) of 
Theorem \ref{generalt}. 
We will in fact show that $r(C)\le 2$ for every cycle $C$.
The linear expression
for cycle $C$ has a non-zero constant term iff 
$C\setminus D(C)\neq \emptyset$.
Consider a fixed component. 
By Condition II all cycles, except perhaps one, 
have associated linear expressions
with non-zero constant terms, therefore they have $r=1$.

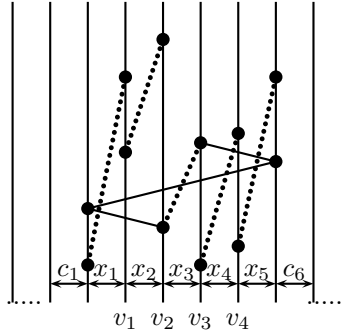
\begin{figure}[h!]
  \begin{center}
    \psset{xunit=.5cm,yunit=.5cm,dotsize=5pt}
    \begin{pspicture}(6,0)(15,10)
	\psline(6,2)(6,10)
	\rput(6.3,2){.....}
	\psline(7,2)(7,10)
	\psline(8,2)(8,10)
	\psline(9,2)(9,10)
	\rput(9,1.5){$v_1$}
	\psline(10,2)(10,10)
	\rput(10,1.5){$v_2$}
	\psline(11,2)(11,10)
	\rput(11,1.5){$v_3$}
	\psline(12,2)(12,10)
	\rput(12,1.5){$v_4$}
	\psline(13,2)(13,10)
	\psline(14,2)(14,10)
	\rput(14.3,2){.....}
	\psline(15,2)(15,10)

	\psline{<->}(7,2.5)(8,2.5)
	\rput(7.5,2.8){$c_1$}
	\psline{<->}(8,2.5)(9,2.5)
	\rput(8.5,2.8){$x_1$}
	\psline{<->}(9,2.5)(10,2.5)
	\rput(9.5,2.8){$x_2$}
	\psline{<->}(10,2.5)(11,2.5)
	\rput(10.5,2.8){$x_3$}
	\psline{<->}(11,2.5)(12,2.5)
	\rput(11.5,2.8){$x_4$}
	\psline{<->}(12,2.5)(13,2.5)
	\rput(12.5,2.8){$x_5$}
	\psline{<->}(13,2.5)(14,2.5)
	\rput(13.5,2.8){$c_6$}

	\psdot(10,4)
	\psdot(8,4.5)
	\psdot(13,5.75)
	\psdot(11,6.25)

	\psline(10,4)(8,4.5)(13,5.75)(11,6.25)
	\psline[linestyle=dotted,dotsep=1pt,linewidth=1.9pt](10,4)(11,6.25)

	\psline[linestyle=dotted,dotsep=1pt,linewidth=1.9pt](8,3)(9,8)
	\psline[linestyle=dotted,dotsep=1pt,linewidth=1.9pt](9,6)(10,9)
	\psline[linestyle=dotted,dotsep=1pt,linewidth=1.9pt](11,3)(12,6.5)
	\psline[linestyle=dotted,dotsep=1pt,linewidth=1.9pt](12,3.5)(13,8)

	\psdot(8,3)
	\psdot(9,8)
	\psdot(9,6)
	\psdot(10,9)
	\psdot(11,3)
	\psdot(12,6.5)
	\psdot(12,3.5)
	\psdot(13,8)

    \end{pspicture}

  \end{center}
  \caption[Cycles spanning over only distinguished edges]{Here the dotted edges represent a set of adjacent distinguished cycle-edges. $r(C) \neq 1$ if all edges of the cycle span over these adjacent distinguished cycle-edges. But all $v_i$'s in the figure have both vertices of the matching-edges used up by cycles. So $C$ could at best be a 4 cycle (since the graph is triangle-free) using up the first and the last vertical lines of this contiguous block and one distinguished cycle-edge. \label{figure:spcaserC}}
\end{figure}

It is sufficient to show that 
the single cycle $C$ for which
$C\subseteq D(C)$, if exists, has $r(C)=2$. Indeed,
let $y_{1},\ldots, y_{l}$ be the distinguished matching-edges belonging to 
this component in this linear order,
and let $y_{p}$ and $y_{p+1}$ be the distinguished matching-edges that belong to 
cycle $C$. Since $C$ is at least a four cycle, it either
contains some other $y_{p'}\not\in\{y_{p},y_{p+1}\}$,
in which case indeed, it is geometrically easy to see that 
one of the other variables from the component has to occur in
$\LL_{C}$ or $C$ is a four cycle and both $y_{p}$ and $y_{p+1}$
occur with multiplicity two in it. In the latter case 
$C$ would form a separate $K_{4}$ component, thus $G=K_{4}$.
In the former case the variable has $r=1$, so $r(C)=2$.
\end{proof}

We are left with proving that we can pick 
distinguished cycle-edges from the cycles such that 
Conditions I and II are satisfied.
Indeed, start from any cycle, and pick an
edge for a distinguished cycle-edge, which has at least 
one adjacent matching-edge $y$ that is common with a different cycle.
If there is none, the cycle is the single (Hamiltonian) cycle,
and if we distinguish any edge, Conditions I and II are clearly satisfied.
Otherwise, in the cycle that contains $y$, pick one of the two edges 
adjacent to $y$, look at the other adjacent matching-edge, $y'$, of this edge,
look for another cycle that is adjacent with $y'$, etc.
The process ends when we get back to any cycle (including the current one)
that has already been visited.
There is one reason for back-track and this is when we return to the 
other adjacent matching-edge, $z$, of the starting edge. In this case we choose
the other edge (recall we always have two choices).
It would be fatal to get back to $z$, since then Condition
I would not hold.

Assume that the above procedure has gone through. Then we have 
distinguished at most three matching-edges adjacent to any cycle. But this is not all. 
We have to do the same procedure from $z$ as well. 
The procedure terminates when we encounter a cycle that has already 
been encountered. Thus in the final step we might create a fourth 
distinguished matching-edge adjacent to one of the cycles, but only in one of them.
This can be the single cycle $C$ in
the component for which $C\subseteq D(C)$. And because the graph is triangle-free, all the other components would have $C \setminus D(C) \neq \emptyset$.

Once we are done with creating
the first component, we select a cycle not
involved in it, and start the same procedure as before
with the only difference that in 
subsequent rounds we also stop if we encounter 
a cycle visited in one of the previous rounds. It is easy to see,
that now for the distinguished cycle-edges that we have selected Conditions I and II hold.

\subsection{Solvability}

Before we prove Theorem \ref{generalt}, we will look at the following special case when all the constant terms in $\LL_i$ are positive.

\begin{theorem}\label{specialth}
Let $B_{1},\ldots,B_{n} > 0$ be positive constants, 
$\LL_{i}(\x)=\sum_{j=1}^{n} a_{i,j}x_{j}$ for $1\le i \le n$ be linear forms. 
Let $g_{i} = {B_{i} + \LL_{i}(\x)
\over x_{i}}$ for $1\le i \le n$. Then
\begin{equation}\label{maineq}
g_{1}(\x) = g_{2}(\x) = \cdots = g_{n}(\x)
\end{equation}
has an all-positive solution.
\end{theorem}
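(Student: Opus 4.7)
The plan is to reformulate the system $g_1(\x)=\cdots=g_n(\x)$ as a single linear equation and then invoke a convergent Neumann series. Calling the common value $t$ and clearing the denominator $x_i$, the defining equations become $t x_i - \sum_j a_{i,j}x_j = B_i$ for every $i$, i.e.,
\[
(tI - A)\,\x = \vec B,
\]
where $A=(a_{i,j})$ is the $n\times n$ matrix of non-negative coefficients and $\vec B=(B_1,\ldots,B_n)^{\top}>0$. Thus it suffices to exhibit any $t>0$ for which this linear system admits an all-positive solution $\x$.

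First, I would choose $t$ strictly larger than every row sum of $A$, that is, $t>\max_i\sum_j a_{i,j}$, which forces $\|A/t\|_\infty<1$. The Neumann series then converges and yields
\[
(tI-A)^{-1}=\frac{1}{t}\sum_{k\ge 0}\left(\frac{A}{t}\right)^k.
\]
Since every power $A^k$ has non-negative entries, so does $(tI-A)^{-1}$. Setting $\x := (tI-A)^{-1}\vec B$ therefore produces a non-negative solution of $(tI-A)\x=\vec B$, which unwinds back to $g_1(\x)=\cdots=g_n(\x)=t$.

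To finish, I need strict positivity of $\x$. The $k=0$ term of the series contributes $\vec B/t$, so $x_i\ge B_i/t>0$ for every $i$, using crucially that every $B_i$ is strictly positive. The only non-routine step is the Neumann-series positivity, and it reduces to the non-negativity of $A$. I expect the real difficulty to appear not in this special case but in the general Theorem \ref{generalt}, where some $B_i$ may vanish; there the reachability hypothesis on $\G(\overline{\LL})$ will have to compensate for the lost direct contributions, by propagating positivity along paths from node $0$ through the higher-order $A^k$ terms of the series.
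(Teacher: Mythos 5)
Your proof is correct, but it follows a genuinely different route from the paper's. You linearize the problem: fixing the common value $t$ and clearing denominators turns the system into $(tI-A)\x=\vec B$ with $A=(a_{i,j})\ge 0$, and for $t$ strictly larger than every row sum of $A$ the Neumann series $\frac{1}{t}\sum_{k\ge 0}(A/t)^k$ shows that $(tI-A)^{-1}$ is entrywise non-negative, so $\x=(tI-A)^{-1}\vec B$ solves the system, with strict positivity supplied by the $k=0$ term $\vec B/t$ (equivalently, $tI-A$ is a nonsingular M-matrix with non-negative inverse). The paper does not prescribe the common value; instead it parameterizes $x_i=\epsilon B_i\alpha_i^{-1}$, notes that $\epsilon g_i(\x)=\alpha_i(1+\epsilon B_i^{-1}\LL_i(\x'))$ is close to $\alpha_i$ for small $\epsilon$, and applies Brouwer's fixed point theorem on the cube $[1,2]^n$ to find $\Alpha$ with $\epsilon g_i(\x)=1.5$ for all $i$. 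Your argument is more elementary and explicit: no fixed point theorem, and it produces a solution for every sufficiently large common value $t$. What the paper's perturbative fixed-point scheme buys is that it is reused almost verbatim for Theorem \ref{generalt}, where some constant terms vanish and the scaling $x_i=\epsilon^{r(i)}x_i'$ by graph distance is introduced; but your closing remark is on target there as well — replacing $\vec B$ by the (possibly partly zero) vector of constant terms $a_{i,0}$, the $i$-th coordinate of $(tI-A)^{-1}\vec B$ is positive exactly when $i$ is reachable from $0$ in $\G(\overline{\LL})$, since $(A^k)_{i,j}>0$ records directed paths, so the Neumann-series route would in fact also deliver the general theorem.
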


\begin{proof} The intuition behind the proof is this: Let $\epsilon$ be very 
small and $\alpha_{1},\ldots,\alpha_{n}> 0$ be fixed. If we set
$x_{i} = \epsilon B_{i}\alpha_{i}^{-1}$ then 
$g_{i}(\x)\approx \epsilon^{-1}\alpha_{i}$. In particular, let $\Alpha$ range
in the $[1,2]^{n}$ solid cube. Then, if $\epsilon$ is small enough, the 
vector $(g_{1}(\x),\ldots,g_{n}(\x))$ will range roughly in the 
$[\epsilon^{-1},2\epsilon^{-1}]^{n}$ cube, thus 
$\epsilon^{-1}(1.5,\ldots,1.5)$,
which is the center of this cube, has to be in the image.

To make this proof idea precise we will use the following version of
Brouwer's well known fix point theorem:

\begin{theorem}[Brouwer]\label{brower}
Let $f:[1,2]^{n}\rightarrow [1,2]^{n}$ be a continuous function. Then 
$f$ has a fix point, i.e. an $\x_{0} \in [1,2]^{n}$ for which
$f(\x_{0}) = \x_{0}$.
\end{theorem}
We will use the fix point theorem as below.
We first define 
\[
h(\alpha_{1},\ldots,\alpha_{n}) = (\epsilon g_{1}(\x),
\ldots,\epsilon g_{n}(\x)),
\]
where $\x = \epsilon (\alpha_{1}^{-1} B_{1},\ldots,
\alpha_{n}^{-1} B_{n}) = \epsilon \x'$, 
and we think of $\epsilon$ as some fixed 
positive number. Notice that $\x'$ is just a function 
of $\Alpha$, independent of $\epsilon$.
It is sufficient to show that 
if $\epsilon$ is small enough,
there are $\alpha_{1},\ldots,\alpha_{n}$  such that $h(\Alpha) = 
(1.5,\ldots,1.5)$, since then $\x$ satisfies
(\ref{maineq}) with common value $1.5 \epsilon^{-1}$. We have:
\[
\epsilon g_{i}(\x) =
\epsilon {B_{i} + \LL_{i}(\x) \over 
\epsilon \alpha_{i}^{-1} B_{i}} = \alpha_{i}( 1 + \epsilon B_{i}^{-1}
\LL_{i}(\x')).
\]
Here we used that $\LL_{i}(\epsilon \x') = \epsilon \LL_{i}(\x')$. 
We would like to have
\begin{equation}\label{secondeq}
\alpha_{i}( 1 + \epsilon B_{i}^{-1}
\LL_{i}(\x')) = 1.5 \;\;\;\;\; \mbox{for $1\le i\le n$}.
\end{equation}
Define
\begin{eqnarray*}
K & = & \max_{i} \sup_{\Alpha \in [1,2]^{n}} 
B_{i}^{-1} \LL_{i}(\x'); \\
\epsilon & = & 1/(10 K).
\end{eqnarray*}
To use the fix point theorem we consider the map
\[
f: (\alpha_{1},\ldots,\alpha_{n}) \rightarrow 
\left(
{1.5 \over
 1 + \epsilon B_{1}^{-1}
\LL_{1}(\x')}, 
\ldots , 
{1.5 \over
 1 + \epsilon B_{n}^{-1}
\LL_{n}(\x')}\right)
\]
on the cube $[1,2]^{n}$. The image is contained in
$[1,2]^{n}$, since if $\Alpha\in [1,2]^{n}$ then 
for $1\le i \le n$ we have 
\[
1\; <\; {1.5 \over 1+ 0.1} \; = \; {1.5 \over 1+ \epsilon K} \; \le\;
{1.5 \over
 1 + \epsilon B_{i}^{-1}
\LL_{i}(\x')} \; \le \; {1.5 \over 1- \epsilon K}  \; = \;
 {1.5 \over 1 - 0.1}  \; < \; 2.
\]
Therefore, by Theorem \ref{brower} there is an $\Alpha\in [1,2]^{n}$ such that
$\alpha_{i} = {1.5 \over
 1 + \epsilon B_{i}^{-1}
\LL_{i}(\x')}$ for $1\le i \le n$, which is equivalent to
(\ref{secondeq}).
\end{proof}

In Theorem \ref{specialth} all linear forms have non-zero constant terms. 
We can, however generalize this to Theorem \ref{generalt}. We discuss its proof below.

\begin{remark}
The non-negativity of the coefficients can be relaxed such that
the theorem becomes a true generalization of Theorem \ref{specialth}.
Since the more general condition is slightly technical,
we will stay with the simpler non-negativity condition, which is 
sufficient for us.
\end{remark}

\begin{proof}
For $1\le i\le n$ let $r(i)= dist(0,i)$ in $\G(\overline{\LL})$.
(In Theorem \ref{specialth} each $r(i)$ was $1$.) Define
\[
x_{i} = \epsilon^{r(i)}x'_{i},
\]
where $\epsilon>0$ will be a small enough
number that we will appropriately fix later, but as of now 
we think about it as a quantity tending to zero.
We can rewrite (\ref{maineq}) as:
\[
\epsilon g_{1}(\x) = \epsilon g_{2}(\x) = \cdots = \epsilon g_{n}(\x).
\]
If we fix $\x'$ and take epsilon tending to zero, then,
\[
\epsilon g_{i}(\x)\rightarrow {\beta_{i}(\x')\over x'_{i}},
\]
where $\beta_{i}(\x') = a_{i,0}/x'_{i}$ if $r(i) = 1$, otherwise
\[
\beta_{i}(\x') = \sum_{j:\;r(j) = r(i)-1} a_{i,j} x'_{j}.
\]
We can now solve the system
\[
{\beta_{i}(\x')\over x'_{i}} = 1.5
\]
and even the system
\begin{equation}\label{syseq}
{\beta_{i}(\x')\over x'_{i}} = \alpha_{i},
\end{equation}
where $1 \le \alpha_{i} \le 2$ for $1\le i\le n$.
Indeed, the solution can be obtained iteratively, by
first computing the values of the variables 
$x_i$ with $r(i)=0$, then with $r(i)=1$, etc.
We can again use the fix point theorem of Brouwer to show that
if $\epsilon$ is sufficiently small, the system
\[
\epsilon g_{i}(\x) = 
1.5\hspace{0.5in}\mbox{for $1\le i\le n$}
\]
has a solution. For this we again parameterize $\x'$ with
$\Alpha$. When $\Alpha$ ranges in
the solid cube $[1,2]^{n}$ then $\x'$ will range in 
some domain $D$, where we obtain $D$ by solving the system
(\ref{syseq}) for all $\alpha_{i}\in [1,2]^{n}$.
Now we have to set $\epsilon$ small enough such that everywhere
in $D$ it should hold that
\begin{equation}\label{rgeq}
0.9 \le 
{ {\beta_{i}(\x') / x'_{i}} \over \epsilon g_{i}(\x)}
= {\alpha_{i} \over \epsilon g_{i}(\x)}
\le 1.1 \hspace{0.5in}\mbox{for $1\le i\le n$.}
\end{equation}
This is easily seen to be possible, since $D$ is contained in
a closed cube in the strictly positive orthant.
We then apply the fix point theorem to 
\[
f: \Alpha \rightarrow \boldsymbol{\gamma},
\]
where
\[
\gamma_{i} = { 1.5\alpha_{i} 
\over
\epsilon g_{i}(\x)}.
\]
The fix point theorem applies, since the
range of $f$ remains in the $[0.9\cdot 1.5,1.1\cdot 1.5]^{n}\subset
[1,2]^{n}$
cube by Equation (\ref{rgeq}).
For the fixed point $\alpha_{i} =  { 1.5\alpha_{i} 
\over
\epsilon g_{i}(\x)}$ for $1\le i\le n$, which implies
$\epsilon g_{i}(\x)=1.5$ for $1\le i\le n$. 
\end{proof}

\section{Proof of Theorem \ref{thm_4basic_slopes}}

We start with some definitions we will use throughout this section.

\subsection{Definitions}
\indent \vspace{-0.3cm}

Throughout this section $\log$ always denotes $\log_2$, the logarithm in base $2$.

We recall that the girth of a graph is the length of its shortest cycle.

\begin{definition}
Define a {\em  supercycle} as a connected graph where every degree is at least two and not all are two. Note that a minimal supercycle will look like a ``$\theta$'' or like a ``dumbbell''. 
\end{definition}

We recall that a {\em cut} is a partition of the vertices into two sets. We say that an edge is in the cut if its ends are in different subsets of the partition. We also call the edges in the cut the {\em cut-edges}. The {\em size} of a cut is the number of cut-edges in it.

\begin{definition}
We say that a cut is an {\em $M$-cut} if the cut-edges form a matching, in other words, if their ends are pairwise different vertices.
We also say that an {\em $M$-cut} is suitable if after deleting the cut-edges, the graph has two components, both of which are supercycles.
\end{definition}

%For any two points $p_1=(x_1,y_1)$ and $p_2=(x_2,y_2)$, we
%say that $p_2$ is {\em to the North} of $p_1$ if
%$x_2=x_1$ and $y_2>y_1$ . Analogously, we say that
%$p_2$ is {\em to the Northwest} of $p_1$ if $x_2+y_2=x_1+y_1$ and $y_2>y_1$.

We refer the reader to Section \ref{slope_intro} for the exact statement of Theorem \ref{slopenum2} \cite{kppt08_2} about subcubic graphs.

Note that Theorem \ref{slopenum2} proves the result of Theorem \ref{thm_4basic_slopes} for subcubic graphs. 
Another minor observation is that we may assume that the graph is connected. 
Since we use the basic four slopes, if we can draw the components of a 
disconnected graph, then we just place them far apart in the plane so 
that no two drawings intersect. 
So we will assume for the rest of the section that the graph is cubic and connected.

\subsection{Preliminaries}
The results in this subsection are also interesting independent of the current problem we deal with. The following is also called the Moore bound.

\begin{lemma}\label{girth}
Every connected cubic graph on $n$ vertices contains a cycle of length at most 
$2 \lceil \log ( \frac{n}{3} +1) \rceil $.
\end{lemma}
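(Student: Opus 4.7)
The plan is to prove this classical Moore-type bound by a standard breadth-first search argument from an arbitrary vertex, counting vertices level-by-level and using the girth assumption to guarantee that the BFS ``tree'' is genuinely a tree up to some depth.

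First I would pick any vertex $v$ and let $L_d$ denote the set of vertices at graph-distance exactly $d$ from $v$. The key observation is the following: if the girth is at least $2k+1$, then for every $1 \le d \le k$, every vertex $u \in L_d$ has exactly one neighbor in $L_{d-1}$ (otherwise two internally-disjoint $v$-to-$u$ paths of length $d$ would give a cycle of length at most $2d \le 2k$), and for every $d \le k-1$ there is no edge inside $L_d$ (otherwise concatenating such an edge with two shortest paths from $v$ would yield a cycle of length at most $2d+1 \le 2k-1$). Combining these, each vertex of $L_d$ for $d \le k-1$ has exactly one edge to $L_{d-1}$ and no edge inside $L_d$, so its remaining two edges go to $L_{d+1}$; since each vertex of $L_{d+1}$ receives exactly one of these, we get $|L_{d+1}| = 2|L_d|$. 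Starting from $|L_0|=1$, $|L_1|=3$, this yields $|L_d| = 3\cdot 2^{d-1}$ for $1 \le d \le k$.

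Summing gives
\[
n \ge \sum_{d=0}^{k}|L_d| \;=\; 1 + 3 + 6 + \cdots + 3\cdot 2^{k-1} \;=\; 3\cdot 2^{k} - 2.
\]
Now I would set $L = \lceil \log(n/3 + 1)\rceil$ and suppose, for contradiction, that the girth exceeds $2L$, i.e.\ is at least $2L+1$. Applying the bound above with $k = L$ gives $n \ge 3\cdot 2^{L} - 2$. But by the definition of $L$ we have $2^{L} \ge n/3 + 1$, hence $3 \cdot 2^{L} - 2 \ge n + 1$, contradicting $n \ge 3\cdot 2^{L} - 2$. Therefore the girth is at most $2\lceil \log(n/3+1)\rceil$, as required.

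There is essentially no obstacle here: the only small care needed is in verifying that the girth hypothesis actually forces both ``no repeated parents'' up to level $k$ and ``no sibling edges'' up to level $k-1$, which are precisely the two cycle-creation mechanisms in a BFS and give cycles of length $2d$ and $2d+1$ respectively. Once those two cases are handled, the geometric sum and the elementary manipulation of the ceiling finish the argument.
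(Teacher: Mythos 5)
Your proof is correct and is essentially the paper's argument: a breadth-first search from an arbitrary vertex, counting $1+3+6+\cdots+3\cdot 2^{d-1}$ vertices per level in a cubic graph and concluding that a short cycle must appear once the levels exhaust $n$; you merely phrase it contrapositively (assume large girth, derive $n\ge 3\cdot 2^{L}-2$, contradict the ceiling) where the paper runs the BFS until a vertex repeats and traces the two paths back. (The only cosmetic slip is calling the two $v$--$u$ paths ``internally disjoint''; they need not be, but their union still contains a cycle of length at most $2d$, so nothing is lost.)
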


% Figure by Padmini
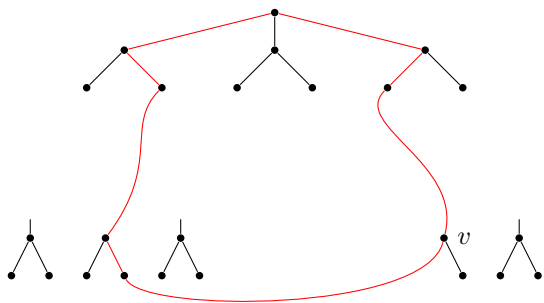
\begin{figure*}[h]
{\centering
\begin{tikzpicture}[scale=0.5]
%\filldraw [blue!80!black!20!white] (0,0) -- (1,1) -- (2.5,1) -- (3.5,0) -- (2.5,-1) -- (1,-1) -- (0,0) -- cycle;
\node [fill=black,circle,inner sep=1pt] (1) at (7,7) {}; 
\node [fill=black,circle,inner sep=1pt] (2) at (3,6) {}; 
\node [fill=black,circle,inner sep=1pt] (3) at (7,6) {}; 
\node [fill=black,circle,inner sep=1pt] (4) at (11,6) {}; 
\node [fill=black,circle,inner sep=1pt] (5) at (2,5) {}; 
\node [fill=black,circle,inner sep=1pt] (6) at (4,5) {}; 
\node [fill=black,circle,inner sep=1pt] (7) at (6,5) {}; 
\node [fill=black,circle,inner sep=1pt] (8) at (8,5) {}; 
\node [fill=black,circle,inner sep=1pt] (9) at (10,5) {}; 
\node [fill=black,circle,inner sep=1pt] (10) at (12,5) {}; 
\node [fill=black,circle,inner sep=1pt] (11) at (0,0) {}; 
\node [fill=black,circle,inner sep=1pt] (12) at (0.5,1) {}; 
\node [fill=black,circle,inner sep=1pt] (13) at (1,0) {}; 
\node [fill=black,circle,inner sep=1pt] (14) at (2,0) {}; 
\node [fill=black,circle,inner sep=1pt] (15) at (2.5,1) {}; 
\node [fill=black,circle,inner sep=1pt] (16) at (3,0) {}; 
\node [fill=black,circle,inner sep=1pt] (17) at (4,0) {}; 
\node [fill=black,circle,inner sep=1pt] (18) at (4.5,1) {}; 
\node [fill=black,circle,inner sep=1pt] (19) at (5,0) {}; 
\node (20) at (11,-1) {}; 
\node (26) at (3.5,-1) {}; 
\node [fill=black,circle,inner sep=1pt,label=right:$v$] (21) at (11.5,1) {}; 
\node [fill=black,circle,inner sep=1pt] (22) at (12,0) {}; 
\node [fill=black,circle,inner sep=1pt] (23) at (13,0) {}; 
\node [fill=black,circle,inner sep=1pt] (24) at (13.5,1) {}; 
\node [fill=black,circle,inner sep=1pt] (25) at (14,0) {}; 
\draw [black] (2) -- (5);
\draw [black] (4) -- (10);
\draw [red] (2) -- (1) -- (4);
\draw [black] (1) -- (3) -- (7);
\draw [red] (2) -- (6);
\draw [black] (8) -- (3);
\draw [red] (9) -- (4);

\draw [black] (11) -- (12) -- (13);
\draw [black] (12) -- (0.5,1.5);
\draw [black] (18) -- (4.5,1.5);
\draw [black] (24) -- (13.5,1.5);
\draw [black] (14) -- (15);
\draw [red] (16) -- (15);
\draw [black] (17) -- (18) -- (19);
\draw [black] (23) -- (24) -- (25);
\draw [black] (21) -- (22);
\draw [red] (21) .. controls (20) and (26) .. (16);
\draw [red] (6) .. controls (3,4) and (4,3) .. (15);
\draw [red] (9) .. controls (9,4) and (12,3) .. (21);

%\draw [->,green] (-0.5,4) arc (145:35:22pt);
\end{tikzpicture}
\caption[Cycle in BFS tree]{Finding a cycle in the BFS tree using that the left child of $v$ already occurred. %before and the two paths to the root give the required cycle.
}
} 
\label{fig:cycle}
\end{figure*}

\begin{proof}
Start at any vertex of $G$ and conduct a breadth first search (BFS) of 
$G$ until a vertex repeats in the BFS tree. 
We note here that by iterations we will (for the rest of the subsection)
mean the number of levels of the BFS tree.
Since $G$ is cubic, after $k$ iterations, the number of vertices visited
will be $1 + 3+ 6+ 12 + \ldots +3 \cdot 2^{k-2} = 1+3(2^{k-1}-1)$. 
And since $G$ has $n$ vertices, some vertex must repeat after 
$k = \lceil \log (\frac{n}{3} + 1) \rceil +1$ iterations.
Tracing back along the two paths obtained for the vertex that reoccurs,
we find a cycle of length at most
$2 \lceil \log ( \frac{n}{3} +1) \rceil $.
%We must be careful though that $2 \lceil \log ( \frac{n}{3} +1) \rceil \le n$.
%and we can check that this inequality holds for all values of $n \ge 4$.
\end{proof}

\begin{lemma}\label{supercycle}
Every connected cubic graph on $n$ vertices with girth $g$ contains a supercycle with at most $2 \lceil \log (\frac{n+1}{g}) \rceil +g-1$ vertices.
\end{lemma}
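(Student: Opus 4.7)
The plan extends the Moore-bound argument of Lemma~\ref{girth}: instead of running BFS from a single vertex, I would run it from the vertex set of a shortest cycle $C$, which has length $g$. Set $L_{0}=V(C)$ and $L_{i}=\{w\in V(G):d(w,V(C))=i\}$; each vertex of $L_{i}$ for $i\ge 1$ gets a parent in $L_{i-1}$, defining the BFS tree. Call an edge of $G$ \emph{non-tree} if it is neither a BFS tree edge nor an edge of $C$.

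The first step is to show that any non-tree edge yields a small supercycle. Let $e=\{u,u'\}$ be non-tree with $u\in L_{d_{u}}$, $u'\in L_{d_{u'}}$, and let $P_{u},P_{u'}$ be the BFS paths from $u,u'$ back to their roots $r,r'\in V(C)$. If $r\ne r'$, then $C\cup P_{u}\cup\{e\}\cup P_{u'}$ supports a theta graph on exactly $g+d_{u}+d_{u'}$ vertices (two arcs of $C$ plus the new path $r\to u\to u'\to r'$). If $r=r'$, the LCA $x$ of $u,u'$ in the BFS subtree rooted at $r$ lies strictly below $r$ (since $r$ has only one BFS child, its unique neighbor outside $C$), so $C$ plus the cycle $x\to u\to u'\to x$ joined by the bridge path $r\to x$ forms a dumbbell on at most $g+d_{u}+d_{u'}$ vertices.

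Next, I would bound the level at which the first non-tree edge must appear. If none exists through level $k$, then $|L_{0}|=|L_{1}|=g$ and $|L_{i}|=2|L_{i-1}|$ for $2\le i\le k$, so exactly $g\cdot 2^{k}$ vertices have been visited; combined with $|V(G)|=n$ this forces $k\le K:=\lfloor\log(n/g)\rfloor$. A non-tree edge at any level $\ell\le K$ already gives a supercycle on at most $g+2\ell\le g+2K$ vertices, well within the target.

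The main obstacle is the boundary case where the first non-tree edge appears at level $K+1$: a \emph{sideways} edge within $L_{K+1}$ would give $g+2(K+1)$, one larger than allowed. I would circumvent this by proving that a \emph{backward} non-tree edge (between $L_{K}$ and $L_{K+1}$) must exist at level $K+1$. Indeed, assuming no non-tree edge through level $K$, the $2|L_{K}|=g\cdot 2^{K}$ forward edges from $L_{K}$ must all terminate in $L_{K+1}$, but $|L_{K+1}|\le n-g\cdot 2^{K}<g\cdot 2^{K}$ because $n<g\cdot 2^{K+1}$, so some vertex in $L_{K+1}$ is hit by at least two of these edges, one of which is backward non-tree. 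This edge gives $d_{u}+d_{u'}=2K+1$, hence a supercycle on at most $g+2K+1$ vertices. A direct case check shows that $g\cdot 2^{K}\le n<g\cdot 2^{K+1}$ implies $K+1=\lceil\log((n+1)/g)\rceil$, so the bound becomes $g-1+2\lceil\log((n+1)/g)\rceil$, as required.
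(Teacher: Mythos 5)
Your proposal is correct and takes essentially the same route as the paper: the paper contracts the girth cycle to a single vertex of degree $g$ and runs the same Moore-bound BFS, which is exactly your level decomposition $L_0=V(C), L_1, L_2,\ldots$, and the ``repeated vertex'' there is your first non-tree edge. Your theta/dumbbell case analysis and the pigeonhole at level $K+1$ just spell out, more carefully, the paper's terse ``trace back along the two paths'' step together with the off-by-one accounting that yields $2\lceil\log\frac{n+1}{g}\rceil+g-1$.
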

\begin{proof}
Contract the vertices of a length $g$ cycle, obtaining a multigraph $G'$ with $n-g+1$ vertices, that is almost $3$-regular, except for one vertex of degree $g$, from which we start a BFS. It is easy to see that the number of vertices visited after $k$ iterations is at most $1 + g + 2g + 4g + \ldots + g\cdot 2^{k-2} = g(2^{k-1} -1)+1$. 
And since $G'$ has $n-g+1$ vertices, some vertex must repeat after 
$k = \lceil \log (\frac{n-g+1}{g} + 1) \rceil +1=\lceil \log (\frac{n+1}{g}) \rceil +1$ iterations.
Tracing back along the two paths obtained for the vertex that reoccurs,
we find a cycle (or two vertices connected by two edges) of length at most $2 \lceil \log (\frac{n+1}{g}) \rceil$ in $G'$.
This implies that in $G$ we have a supercycle with at most $2 \lceil \log (\frac{n+1}{g}) \rceil +g-1$ vertices.
\end{proof}

\begin{lemma}\label{Mcut}
Every connected cubic graph on $n>2s -2$ vertices with a supercycle with $s$ vertices contains a suitable $M$-cut of size at most $s -2$.
\end{lemma}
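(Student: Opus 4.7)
The plan is to enlarge $S$ to a set $S' \supseteq V(S)$ whose edge-boundary in $G$ is automatically a matching, and then take the desired cut to be the edge-boundary of a suitable component of $V(G) \setminus V(S')$. Starting from $S' := V(S)$, I repeatedly absorb any vertex $v \notin S'$ that has at least two neighbors already in $S'$. Because $v$ has degree $3$ in $G$, such an absorption removes either $2$ or $3$ cut-edges while adding either $1$ or $0$, so at every step $|\partial S'|$ strictly decreases and the quantity $|V(S')| + |\partial S'|$ is non-increasing (it changes by $0$ when $v$ has two neighbors in $S'$ and by $-2$ when it has three). When the process halts, $\partial S'$ is a matching: no outside vertex has two neighbors in $S'$ by termination, and no vertex of $S'$ has two outside neighbors, since each degree-$2$ vertex of $S$ started with a single outside edge and each absorbed vertex is left with at most one edge outside $S'$.

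Two quantitative estimates then follow directly from the monotonicity above. Writing $d_3 \geq 2$ for the number of degree-$3$ vertices of the supercycle $S$, I have initially $|\partial S| = s - d_3$ and $|V(S)| + |\partial S| = 2s - d_3$, hence at the end
\[
|\partial S'| \leq s - d_3 \leq s - 2, \qquad |V(S')| + |\partial S'| \leq 2s - d_3 \leq 2s - 2.
\]
Combining the second inequality with the hypothesis $n > 2s - 2$ gives the strict inequality $|T| > |\partial S'|$, where $T := V(G) \setminus V(S')$.

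Let $A_1, \ldots, A_k$ be the connected components of $G[T]$. Because $\partial S'$ is a matching, every vertex of $T$ lies in at most one cut-edge, so every $A_j$ has minimum degree at least $2$ in its induced subgraph. If every $A_j$ were a cycle, each vertex of $T$ would lie on exactly one cut-edge, giving $|T| = |\partial S'|$ and contradicting the strict inequality just obtained. Consequently, some component $A := A_j$ contains a vertex of degree $3$ in $G[A]$ and is thus itself a supercycle; I propose $A$ as one side of the cut and $B := V(G) \setminus A$ as the other.

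The remaining verification is that $E(A,B)$ is a suitable $M$-cut. Every edge leaving $A$ must end in $V(S')$ because $A$ is a component of $G[T]$, so $E(A,B) \subseteq \partial S'$; in particular it is a matching of size at most $s - 2$. For $B$ to be a supercycle, I note that $B$ contains $V(S')$ together with all other components $A_i$, and connectedness of $G$ forces each such $A_i$ to send at least one edge into $V(S')$, so $B$ is connected; its minimum degree is at least $2$ by the matching property; and any original degree-$3$ vertex of $S$ keeps all three of its edges inside $V(S) \subseteq B$, supplying a degree-$3$ vertex of $G[B]$. The main obstacle anticipated is the case when $G[T]$ is disconnected and one hopes that \emph{some} component is a supercycle; the counting inequality $|T| > |\partial S'|$, which is exactly the point at which the hypothesis $n > 2s - 2$ enters, is what rules out the possibility that all components are cycles and completes the argument.
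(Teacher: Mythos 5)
Your proof is correct and takes essentially the same route as the paper's: you grow the supercycle side by absorbing outside vertices incident to two or more cut-edges, use the hypothesis $n>2s-2$ (via the non-increasing quantity $|V(S')|+|\partial S'|$, which replaces the paper's explicit count of absorbed vertices and cut-edges) to rule out the complement being a union of cycles, and then merge the leftover components with the supercycle side to restore connectivity. Your bookkeeping with the monotone invariant and the explicit check that \emph{both} sides are supercycles is a slightly cleaner rendering of the same argument.
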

\begin{proof}
The supercycle with $s$ vertices, $A$, has at least two vertices of degree $3$.
The size of the $(A, G-A)$ cut is thus at most $s -2$. 
This cut need not be an $M$-cut because the edges may have a common neighbor
in $G-A$. To repair this, we will now add, iteratively, 
the common neighbors of edges in the cut to $A$, 
until no edges have a common neighbor in $G-A$.
Note that in any iteration, if a vertex, $v$, adjacent to exactly two cut-edges
was chosen, then the size of $A$ increases by $1$ and the size of the cut decreases
by $1$ (since, these two cut-edges will get added to $A$ along with $v$,  but
 since the graph is cubic, the third edge from $v$ will become a part of the
cut-edges).
If a vertex adjacent to three cut-edges was chosen, then the size of $A$ increases
by $1$ while the number of cut-edges decreases by $3$.
From this we can see that the maximum number of vertices that could
have been added to $A$ during this process is $s -3$. 
Now there are three conditions to check.

The first condition is that this process returns
a non-empty second component. This would occur if 
$$ (n- s) - (s-3)>0$$
or,
$$ n > 2s -3.$$

The second condition is that the second component should not be a collection of disjoint cycles.
For this we note that it is enough to check that at every stage, the 
number of cut-edges is strictly smaller than the number of vertices
in $G-A$. But since in the above iterations, the number of cut-edges decreases
by a number greater than or equal to the decrease in the size of $G-A$, it is enough to 
check that before the iterations, the number of cut-edges is strictly 
smaller than the number of vertices in $G-A$.
This is the condition
$$n - s > s-2$$
or,
$$ n>2s -2.$$

Note that if this inequality holds then the non-emptiness condition will 
also hold.

Finally, we need to check that both components are connected. $A$ is connected
but $G-A$ need not be. We pick a component 
in $G-A$ that has more vertices than the number of cut-edges adjacent to it.
Since the number of cut-edges is strictly smaller than the number of vertices in
$G-A$, there must be one such component, say $B$, in $G-A$. We add every other
component of $G-A$ to $A$. Note that the size of the cut only decreases with
this step. Since $B$ is connected and has more vertices than the number of
cut-edges, $B$ cannot be a cycle.
\end{proof}

\begin{corollary}\label{exisMcut}
Every connected cubic graph on $n \ge 18$ vertices contains a suitable $M$-cut.
\end{corollary}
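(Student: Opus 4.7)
The plan is a direct chain of the three preceding lemmas. Let $G$ be a connected cubic graph on $n \geq 18$ vertices with girth $g$. First, Lemma \ref{girth} gives $g \leq 2\lceil \log(n/3 + 1) \rceil$. Next, Lemma \ref{supercycle} produces a supercycle on at most
$$s(g) := 2\lceil \log((n+1)/g) \rceil + g - 1$$
vertices. Finally, Lemma \ref{Mcut} yields the desired suitable $M$-cut, provided one verifies its hypothesis $n > 2s(g) - 2$, equivalently $s(g) < (n+2)/2$.

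The only real content of the proof is thus checking this last inequality for all $n \geq 18$. Because of the ceiling and the additive $g-1$ term, $s(g)$ is not monotone in $g$, so I would proceed by case analysis on the possible girth values. For the base case $n = 18$, the girth bound forces $g \in \{3,4,5,6\}$, and a single arithmetic check in each of these four cases gives $s(g) \leq 9$, which is strictly less than $(18+2)/2 = 10$. Hence Lemma \ref{Mcut} applies at $n = 18$.

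For $n > 18$, the bound $s(g)$ grows only like $O(\log n)$ (the admissible girth is itself at most logarithmic in $n$, and $\lceil \log((n+1)/g) \rceil$ is likewise $O(\log n)$), while $(n+2)/2$ grows linearly. So once one has verified the inequality up through some modest threshold, it is automatically preserved for all larger $n$; the intermediate values between the base case and the comfortable asymptotic regime are handled by the same sort of short arithmetic check.

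The main obstacle, such as it is, is simply the finite bookkeeping in this case analysis — the nonmonotonicity of $s(g)$ in $g$ means one cannot quite reduce to a single clean inequality and must instead sweep through the few values of $g$ admitted by the girth bound. Everything else is an immediate invocation of the three preceding lemmas in order.
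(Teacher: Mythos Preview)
Your proposal is correct and follows essentially the same approach as the paper: chain Lemmas~\ref{girth}, \ref{supercycle}, and~\ref{Mcut}, then verify the numerical inequality $n > 2s(g) - 2$ over the admissible range of girths. The paper makes the asymptotic part explicit (bounding $s \le 4\log(n+1)+1$, which suffices for $n \ge 44$) and then dispatches the intermediate values $18 \le n \le 42$ by a short computer check---exactly the ``same sort of short arithmetic check'' you allude to.
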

\begin{proof}
Using the first two lemmas, we have a supercycle with $s\le 2 \lceil \log (\frac{n+1}{g}) \rceil +g-1$ vertices where $3\le g\le 2\lceil \log ( \frac{n}{3} +1) \rceil $. Then using the last lemma, we have an $M$-cut with both partitions being a supercycle if $n>2s-2$. So all we need to check is that $n$ is indeed big enough.
Note that

$$s\le 2 \log (\frac{n+1}{g}) +g+1= 2 \log (n+1) + g+1 -2\log g \le $$
$$ \le 2 \log (n+1) + 2 \log ( \frac{n}{3} +1) -2\log (2 \log ( \frac{n}{3} +1))+1$$

where the last inequality follows from the fact that $x-2\log x$ is increasing for $x\ge 2/\log_e 2\approx 2.88$. So we can bound the right hand side from above by $4 \log (n+1) +1$. Now we need that 

$$n> 2(4 \log (n+1) +1)-2=8\log (n+1)$$

which holds if $n\ge 44$.

The statement can be checked for $18\le n\le 42$ with code that can be found in the Appendix. It outputs for a given value of $n$, the $g$ for which $2s-2$ is maximum and this maximum value.
Based on the output we can see that for $n \ge 18$, this value is smaller.
\end{proof}

\subsection{Proof}

\begin{lemma}\label{cubicdr}
Let $G$ be a connected cubic graph with a suitable $M$-cut. % that disconnects $G$ into two connected components that are not cycles.
Then, $G$ can be drawn with the four basic slopes.
\end{lemma}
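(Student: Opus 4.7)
The plan is to draw each of the two supercycles of the cut separately by invoking Theorem~\ref{slopenum2}, then glue the two drawings together along the cut-edges, which will be drawn vertically.

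Let the cut-edges be $u_1v_1,\dots,u_kv_k$ with $u_i\in A$ and $v_i\in B$, where $A,B$ are the two supercycles obtained by deleting the cut. Because the cut is an $M$-cut, the $u_i$'s are pairwise distinct (and similarly for the $v_i$'s), and they are exactly the degree-two vertices of $A$ and $B$ respectively. Both $A$ and $B$ are connected, neither is a cycle (being supercycles, they contain a vertex of degree three), and each has at least one vertex of degree less than three (since $G$ is connected, the cut is nonempty, so $k\geq 1$). Thus the hypotheses of Theorem~\ref{slopenum2} are met on both sides.

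Now choose real numbers $x_1,\dots,x_k$ that are linearly independent over $\mathbb{Q}$ and apply Theorem~\ref{slopenum2} twice: once to $A$ with the $x$-coordinate of $u_i$ prescribed to be $x_i$, and once to $B$ with the $x$-coordinate of $v_i$ prescribed to be $x_i$. Each drawing uses only the four basic slopes $\{0,\pi/2,\pi/4,-\pi/4\}$, and by property (3) no vertex in either drawing lies directly North of any degree-two vertex. Reflect the drawing of $B$ about a horizontal line (this preserves the set of basic slopes and $x$-coordinates, and now no vertex of $B$ lies directly South of any $v_i$), then translate $B$ vertically upward until every vertex of $B$ sits strictly above every vertex of $A$. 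Finally, insert the cut-edges $u_iv_i$ as vertical segments (slope $\pi/2$).

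It remains only to verify that no vertical segment $u_iv_i$ passes through another vertex. Any vertex of the combined drawing that lies on the vertical line $x=x_i$ must sit either strictly below $u_i$ (if in $A$, by property (3) of Theorem~\ref{slopenum2}) or strictly above $v_i$ (if in the reflected $B$, by the mirrored property (3)); the $x_i$ are distinct by rational independence, so no other preassigned vertex shares this vertical line. Thus the interior of $u_iv_i$ is vertex-free, and the whole drawing uses only the four basic slopes. The main (and really only) obstacle is bookkeeping: confirming that a suitable $M$-cut supplies exactly the data needed to invoke Theorem~\ref{slopenum2} on each side with the matched $x$-coordinates—and this is precisely what ``suitable'' guarantees.
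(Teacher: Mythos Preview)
Your proof is correct and follows essentially the same approach as the paper: apply Theorem~\ref{slopenum2} to each supercycle with matched $x$-coordinates at the degree-two vertices, flip one side, separate vertically, and join by vertical cut-edges. The only cosmetic difference is that the paper assigns coordinates $-x_i$ to the second component and rotates it by~$\pi$, whereas you assign the same $x_i$ and reflect about a horizontal line; these achieve the same alignment and both preserve the set of basic slopes.
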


% Figure by Padmini
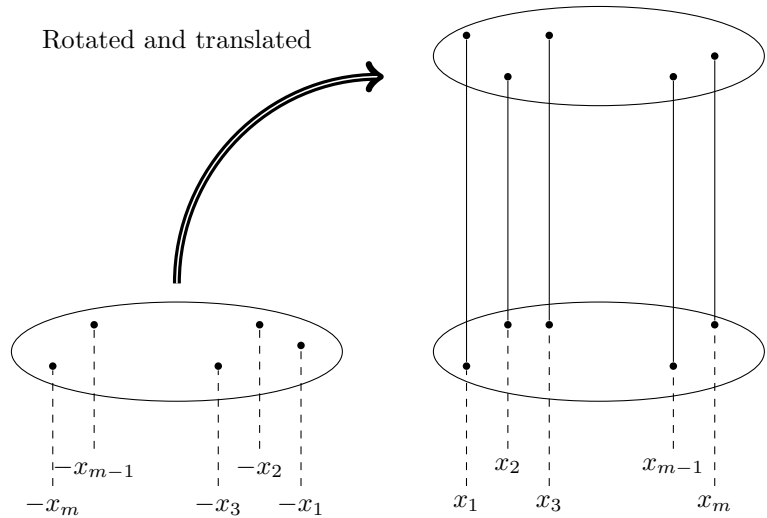
\begin{figure*}[h]
{\centering
\begin{tikzpicture}[scale=0.55]
%\filldraw [blue!80!black!20!white] (0,0) -- (1,1) -- (2.5,1) -- (3.5,0) -- (2.5,-1) -- (1,-1) -- (0,0) -- cycle;
\node [fill=black,circle,inner sep=1pt] (1) at (1,1) {}; 
\node [fill=black,circle,inner sep=1pt] (2) at (2,2) {}; 
\node [fill=black,circle,inner sep=1pt] (3) at (5,1) {}; 
\node [fill=black,circle,inner sep=1pt] (4) at (6,2) {}; 
\node [fill=black,circle,inner sep=1pt] (5) at (7,1.5) {}; 
\node [fill=black,circle,inner sep=1pt] (6) at (11,1) {}; 
\node [fill=black,circle,inner sep=1pt] (7) at (12,2) {}; 
\node [fill=black,circle,inner sep=1pt] (8) at (13,2) {}; 
\node [fill=black,circle,inner sep=1pt] (9) at (16,1) {}; 
\node [fill=black,circle,inner sep=1pt] (10) at (17,2) {}; 
\node [fill=black,circle,inner sep=1pt] (11) at (11,9) {}; 
\node [fill=black,circle,inner sep=1pt] (12) at (12,8) {}; 
\node [fill=black,circle,inner sep=1pt] (13) at (13,9) {}; 
\node [fill=black,circle,inner sep=1pt] (14) at (16,8) {}; 
\node [fill=black,circle,inner sep=1pt] (15) at (17,8.5) {}; 

\draw [black] (8,1.35) arc (0:360:4cm and 1.2cm);
\draw [black] (18.2,1.35) arc (0:360:4cm and 1.2cm);
\draw [black] (18.2,8.5) arc (0:360:4cm and 1.2cm);

\draw [dashed,black] (11,-1.9) node[below] {$x_1$} -- (6);
\draw [dashed,black] (12,-1) node[below] {$x_2$} -- (7);
\draw [dashed,black] (13,-1.9) node[below] {$x_3$} -- (8);
\draw [dashed,black] (16,-1) node[below] {$x_{m-1}$} -- (9);
\draw [dashed,black] (17,-1.9) node[below] {$x_m$} -- (10);
\draw [dashed,black] (1,-1.9) node[below] {$-x_m$} -- (1);
\draw [dashed,black] (2,-1) node[below] {$-x_{m-1}$} -- (2);
\draw [dashed,black] (5,-1.9) node[below] {$-x_3$} -- (3);
\draw [dashed,black] (6,-1) node[below] {$-x_2$} -- (4);
\draw [dashed,black] (7,-1.9) node[below] {$-x_1$} -- (5);
\draw [black] (6) -- (11);
\draw [black] (7) -- (12);
\draw [black] (8) -- (13);
\draw [black] (9) -- (14);
\draw [black] (10) -- (15);

\draw [very thick,double,->] (4,3) node[above=3cm] {Rotated and translated} arc (180:90:5cm);
%\draw [red] (21) .. controls (20) and (26) .. (16);
%\draw [red] (6) .. controls (3,4) and (4,3) .. (15);
%\draw [red] (9) .. controls (9,4) and (12,3) .. (21);

%\draw [->,green] (-0.5,4) arc (145:35:22pt);
\end{tikzpicture}
\caption[Patching components of $M$-cut]{The $x$-coordinates of the degree $2$ vertices is suitably chosen and one component is rotated and translated to make the $M$-cut vertical.}
} 
\label{fig:final}
\end{figure*}

\begin{proof}
The proof follows rather straightforwardly from Theorem \ref{slopenum2}. 
Note that the two components are subcubic graphs and we can choose 
the $x$-coordinates of the vertices of the $M$-cut (since they are 
the vertices with degree two in the components). If we picked coordinates 
$x_1, x_2, \ldots, x_m$ in one component, then for the neighbors of these vertices in the other component we pick the $x$-coordinates $-x_1,-x_{2},\ldots, -x_m$. We now rotate the second
component by $\pi$ and place it very high above the other component so that the drawings 
of the components do not intersect
and align them so that the edges of the $M$-cut will be vertical (slope $\pi/2$). 
Also, since Theorem \ref{slopenum2} guarantees that degree two vertices have 
no other vertices on the vertical line above them, hence the drawing
we obtain above is a valid representation of $G$ with the basic slopes.
\end{proof}

By combining Lemma \ref{exisMcut} and Lemma \ref{cubicdr}, we can see that Theorem \ref{thm_4basic_slopes} 
is true for all cubic graphs with $n\ge 18$. For smaller graphs, we 
reduce the number of graphs we have to check with the 
help of Lemma \ref{vert_conn} and Remark \ref{conn_remark} as a 
consequence of which, a graph that cannot be drawn with the four basic 
slopes must be three vertex and edge connected. 
%We use this and the
%result of \cite{eng}, which we state below, to analyze all graphs with
%at most $16$ vertices.

%The lemmas below also occur in different papers and we give
%references where required.

%\begin{lemma}\label{bridge}
%A connected cubic graph with a cut vertex can be drawn with the four basic slopes.
%\end{lemma}
%\begin{proof}
%We observe that if the cubic graph has a cut vertex then it must also 
%have a bridge. This bridge would be the suitable $M$-cut for using the
%previous Lemma \ref{cubicdr}, since neither of the components can be disconnected
%or cycles.
%\end{proof}

%\begin{lemma}\label{twodiscset}
%A connected cubic graph with a two vertex disconnecting set can be drawn
%with the four basic slopes.
%\end{lemma}
%\begin{proof}
%If a cubic graph has a two vertex disconnecting set, then it must have a 
%cut of size two with non-adjacent edges. Again the two components
%we obtain must be connected (or the graph has a bridge) and cannot be cycles.
%Thus we can apply Lemma \ref{cubicdr} again to get the required drawing.
%\end{proof}

We also employ the following theorem by Max Engelstein \cite{eng}.

\begin{lemma}\label{maxeng}
Every $3$-connected cubic graph with a Hamiltonian cycle can be drawn 
in the plane with the four basic slopes.
\end{lemma}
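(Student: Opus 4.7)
The plan is to place each chord (non-Hamiltonian edge) on its own vertical line and to draw the Hamiltonian cycle using only the two diagonal slopes $\pi/4$ and $-\pi/4$, so that three of the four basic slopes already suffice. Write the Hamiltonian cycle as $H = v_1 v_2 \cdots v_n v_1$ (with $n$ even, since $G$ is cubic) and label the $n/2$ chords $1, \ldots, n/2$; for each $i$, let $c(i)$ denote the index of the chord incident to $v_i$. I introduce $n/2$ positive unknowns $z_1, \ldots, z_{n/2}$, which will be the common $x$-coordinate of the two endpoints of the corresponding chord, so that $v_i$ is placed at $(z_{c(i)}, y_i)$. I fix the slope of the cycle-edge $v_i v_{i+1}$ to be $\epsilon_i := (-1)^{i+1}\in\{+1,-1\}$ in strict alternation; this recursively determines $y_{i+1} = y_i + \epsilon_i (z_{c(i+1)} - z_{c(i)})$ once $y_1$ is fixed.

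The only constraint the graph imposes is that the Hamiltonian cycle must close up, $\sum_{i=1}^n (y_{i+1} - y_i) = 0$. Expanding and using the alternation, this collapses to the single linear equation $\sum_{j=1}^{n/2} \mu_j z_j = 0$, where $\mu_j = (-1)^{p_j+1} + (-1)^{q_j+1} \in \{-2, 0, +2\}$ and $p_j, q_j$ are the positions of the two endpoints of chord $j$ along $H$. A short parity count shows that the chords with $\mu_j = +2$ (both endpoints at odd positions) and those with $\mu_j = -2$ (both at even positions) occur in equal numbers $k\ge 0$, so the equation takes the balanced form $\sum_{j\in O} z_j = \sum_{j\in E} z_j$ with $|O| = |E| = k$. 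If $k=0$ the equation is vacuous; otherwise it cuts out an affine subspace of $\mathbb{R}^{n/2}$ of dimension $n/2 - 1$ that meets the positive orthant in a nonempty open set.

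Once the $z_j$ are fixed, the entire drawing is determined, so it remains to choose them so that the drawing is valid, meaning all $z_j$ are distinct (so all $n$ vertices are distinct) and no cycle-edge of slope $\pm 1$ passes through a non-endpoint vertex. Each forbidden coincidence is a polynomial equality in the $z_j$, so the bad set is a finite union of proper subvarieties of the solution space and a generic solution avoids it. The main obstacle is ruling out the degenerate case in which the closure equation itself forces some $z_i = z_j$; this can only happen when the solution space is $\le 1$-dimensional, which forces $n = 4$ and hence $G = K_4$. The lemma for $K_4$ is handled directly (place it as the unit square with both diagonals, using all four basic slopes), and for every other $3$-connected cubic Hamiltonian graph $n \ge 6$, so the solution space has dimension at least $2$ and a generic point in its positive part delivers the desired straight-line drawing with only the four basic slopes.
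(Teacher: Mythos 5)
Your strategy (chords on vertical lines, Hamiltonian cycle drawn with strictly alternating slopes $\pm\pi/4$) is essentially the one the paper attributes to Engelstein \cite{eng}; the paper itself gives no proof of this lemma, it only cites it, and its own remark notes that this alternating formulation is what yields the Hamiltonian case. So the route is reasonable, but as written your argument has a genuine gap in exactly the place where the work lies: the degenerate cases. Your closure equation is $\sum_{j\in O} z_j=\sum_{j\in E} z_j$ with $|O|=|E|=k$, and you claim the equation can force some $z_i=z_j$ only when the solution space is at most one-dimensional, hence only for $K_4$. That is false. Whenever $k=1$ the closure equation \emph{is} the equation $z_a=z_b$, forcing two distinct chords onto a common vertical line, while the solution space still has dimension $n/2-1\geq 2$. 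This happens already for the triangular prism: take the Hamiltonian cycle $1,2,5,4,6,3$; the chord $56$ sits at two odd positions, the chord $23$ at two even positions, and $14$ is mixed, so $k=1$. There are arbitrarily large $3$-connected cubic Hamiltonian graphs with $k=1$, and for all of them your plan of choosing the $z_j$ pairwise distinct is impossible. Worse, once two vertical chords are forced onto one line, you need their $y$-intervals to be disjoint and the four vertices on that line to avoid each other's segments; ``overlapping intervals'' is an open condition, not a measure-zero one, so genericity cannot rescue this case and a separate construction or perturbation of the slope pattern is needed.

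The second, related gap is that ``the bad set is a finite union of \emph{proper} subvarieties'' is asserted rather than proved. For instance, distinctness of the $z_j$ does not by itself make all vertices distinct: the two endpoints of a chord share an $x$-coordinate, and their $y$-difference is a linear form in the $z_j$ whose coefficients cancel precisely on chords whose endpoints lie with mixed parity inside the arc spanned by the chord; showing this form is not identically zero on the closure hyperplane amounts to excluding a $2$-edge cut, i.e.\ it is exactly here that $3$-edge-connectivity must enter. The same issue arises for ``no cycle edge of slope $\pm 1$ passes through another vertex'' and for coincidences created in the $k=1$ case above. Your proposal never actually uses the $3$-connectivity hypothesis (beyond naming $K_4$), which is a strong sign that the degeneracy analysis is incomplete; a correct proof along these lines must invoke it to certify that each degeneracy locus is a proper subvariety of the solution space, and must handle the $k=1$ configurations by an explicit additional argument rather than by genericity.
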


Note that combining this with Lemma \ref{vert_conn} 
we even get

\begin{corollary} 
Every cubic graph with a Hamiltonian cycle can be drawn 
in the plane with the four basic slopes.
\end{corollary}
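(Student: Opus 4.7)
The plan is a direct case analysis on the vertex connectivity of $G$. Since any graph containing a Hamiltonian cycle is automatically $2$-connected (the Hamiltonian cycle itself is $2$-connected and spans every vertex of $G$), the only remaining question is whether $G$ is $3$-connected or only $2$-connected. I would dichotomize on precisely this.

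First I would handle the case where $G$ is $3$-connected. Here Lemma \ref{maxeng} applies verbatim: every $3$-connected cubic graph admitting a Hamiltonian cycle has a straight-line drawing with the four basic slopes, and there is nothing further to prove.

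In the complementary case, $G$ is $2$-connected but not $3$-connected, so $G$ admits a $2$-vertex disconnecting set. Note that Lemma \ref{vert_conn} makes no assumption about Hamiltonicity: it only asks that the cubic graph have a cut-vertex or a two-vertex disconnecting set. Thus I can invoke Lemma \ref{vert_conn} directly on $G$ to produce a drawing with the four basic slopes, even though the Hamiltonicity of $G$ is not needed by (and not preserved in) the splitting procedure used in that lemma.

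The main thing to verify is simply that these two cases exhaust all possibilities, which they do by the observation about $2$-connectivity above. There is no genuine obstacle: the corollary is essentially immediate once one notices that the Hamiltonian cycle rules out the cut-vertex subcase of Lemma \ref{vert_conn} but leaves us free to use its $2$-vertex disconnecting set subcase, while the $3$-connected situation is exactly what Lemma \ref{maxeng} was tailored for. Hence no additional combinatorial or geometric work is required beyond citing the two preceding results.
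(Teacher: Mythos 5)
Your proof is correct and matches the paper's own argument, which likewise obtains the corollary by combining Lemma \ref{maxeng} for the $3$-connected case with Lemma \ref{vert_conn} for graphs having a two-vertex disconnecting set. The only addition you make is the (true but not strictly needed) observation that Hamiltonicity rules out cut-vertices, since Lemma \ref{vert_conn} covers that subcase anyway.
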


The graphs which now need to be checked satisfy the following conditions:
\begin{enumerate}
\item the number of vertices is at most $16$
\item the graph is $3$-connected
%\item the graph has girth at least $4$
\item the graph does not have a Hamiltonian cycle.
\end{enumerate}

\begin{remark}
We now bring the attention of our reader to Remark \ref{triangle_free_remark} to add that we may also add to the above list that the graph does not contain a triangle. However, we use our girth lemmas to have an easy way to analyze the 
graphs excluded by only the above three assertions.
\end{remark}

% Figure by Padmini
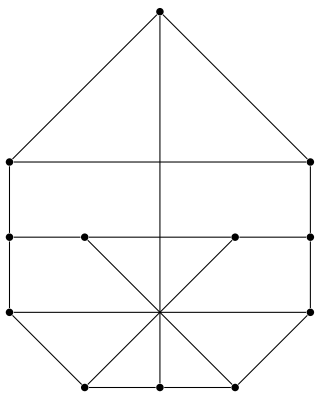
\begin{figure}[htp]\label{fig:tietze}
{\centering
\begin{tikzpicture}[scale=1]
%\filldraw [blue!80!black!20!white] (0,0) -- (1,1) -- (2.5,1) -- (3.5,0) -- (2.5,-1) -- (1,-1) -- (0,0) -- cycle;
\node [fill=black,circle,inner sep=1pt] (1) at (0,1) {}; 
\node [fill=black,circle,inner sep=1pt] (2) at (0,2) {}; 
\node [fill=black,circle,inner sep=1pt] (3) at (1,2) {}; 
\node [fill=black,circle,inner sep=1pt] (4) at (3,2) {}; 
\node [fill=black,circle,inner sep=1pt] (5) at (4,2) {}; 
\node [fill=black,circle,inner sep=1pt] (6) at (4,1) {}; 
\node [fill=black,circle,inner sep=1pt] (7) at (3,0) {}; 
\node [fill=black,circle,inner sep=1pt] (8) at (2,0) {}; 
\node [fill=black,circle,inner sep=1pt] (9) at (1,0) {}; 
\node [fill=black,circle,inner sep=1pt] (10) at (0,3) {}; 
\node [fill=black,circle,inner sep=1pt] (11) at (4,3) {}; 
\node [fill=black,circle,inner sep=1pt] (12) at (2,5) {}; 
\draw [black] (1) -- (2) -- (3) -- (4) -- (5) -- (6) -- (1);
\draw [black] (1) -- (9) -- (8) -- (7) -- (6);
\draw [black] (10) -- (11) -- (12) -- (10);
\draw [black] (9) -- (4);
\draw [black] (7) -- (3);
\draw [black] (2) -- (10);
\draw [black] (8) -- (12);
\draw [black] (5) -- (11);
%\draw [->,green] (-0.5,4) arc (145:35:22pt);
\end{tikzpicture}
\caption[The Tietze's graph]{The Tietze's graph drawn with the four basic slopes.}
} 
\end{figure}

Note that if the number of vertices is at most $16$, then it follows from Lemma \ref{girth} that the girth is at most $6$.
Luckily there are several lists available of cubic graphs with a given number of vertices, $n$ and a given girth, $g$.

If $g=6$, then there are only two graphs with at most $16$ vertices (see \cite{graphlist, meringer}), both containing a Hamiltonian cycle.

If $g=5$ and $n=16$, then Lemma \ref{supercycle} gives a supercycle with at most $8$ vertices, so using Lemma \ref{Mcut} we are done.

If $g=5$ and $n=14$, then there are only nine graphs (see \cite{graphlist, meringer}), all containing a Hamiltonian cycle.

If $g\le 4$ and $n=16$, then Lemma \ref{supercycle} gives a supercycle with at most $8$ vertices, so using Lemma \ref{Mcut} we are done.

If $g\le 4$ and $n=14$, then Lemma \ref{supercycle} gives a supercycle with at most $7$ vertices, so using Lemma \ref{Mcut} we are done.

Finally, all graphs with at most $12$ vertices are either not $3$-connected or contain a Hamiltonian cycle, except for the Petersen graph and Tietze's Graph (see \cite{wiki}).
For the drawing of these two graphs, see the respective Figures.
%The drawing of Petersen graph can be found in the Introduction \ref{fig:petersen} and we have given above a drawing of Tietze's graph in Figure \ref{fig:tietze}.

\section{Which four slopes? and other concluding questions}
After establishing Theorem \ref{thm_4basic_slopes} the question arises whether we could have used any other four slopes.
Call a set of slopes {\em good} if every cubic graph has a straight-line drawing with them. 
In this section we prove Theorem \ref{karakterizacio} that claims that the following statements are equivalent for a set $S$ of four slopes.

\begin{enumerate}
\item $S$ is good.
\item $S$ is an affine image of the four basic slopes.
\item We can draw $K_4$ with $S$.
\end{enumerate}

\begin{proof}
Since affine transformation keeps incidences, any set that is the affine image of the four basic slopes is good.

On the other hand, if a set $S=\{s_1,s_2,s_3,s_4\}$ is good, then $K_4$ has a straight-line drawing with $S$. Since we do not allow a vertex to be in the interior of an edge, the four vertices must be in general position. This implies that two incident edges cannot have the same slope. Therefore there are two slopes, without loss of generality $s_1$ and $s_2$, such that we have two edges of each slope. These four edges must form a cycle of length four, which means that the vertices are the vertices of a parallelogram. But in this case there is an affine transformation that takes the parallelogram to a square. This transformation also takes $S$ into the four basic slopes.
\end{proof}

Note that a similar reasoning shows that no matter how many slopes we take, their set need not be good, because we cannot even draw $K_4$ with them unless they satisfy some correlation. 
The above proofs use the four basic slopes only in a few places (for rotation invariance and to start induction). Thus, we make the following conjecture.

\begin{conjecture}
There is a (not necessarily connected, finite) graph such that a set of slopes is good if and only if this graph has a straight-line drawing with them.
\end{conjecture}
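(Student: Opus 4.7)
The conjecture asks for a single finite graph $H$ such that for every set $S$ of slopes, $S$ is good (every cubic graph admits a straight-line drawing using only slopes from $S$) if and only if $H$ itself admits a straight-line drawing with slopes from $S$. The natural opening move is to require $H$ to be cubic, so that the ``only if'' direction is built into the definition of goodness. All the content is then in the converse: drawing one specific $H$ with $S$ must force $S$ to be rich enough to draw every cubic graph.

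The template to imitate is the proof of Theorem \ref{karakterizacio}, where for $|S|=4$ the graph $K_4$ already plays this universal role: once $K_4$ is drawn with $S$, a pigeonhole forces two pairs of parallel edges, hence $S$ is an affine image of the four basic slopes, and Theorem \ref{thm_4basic_slopes} then finishes. I would attempt to build $H$ as a disjoint (or slightly glued) union of cubic gadgets designed to pin down, in a slope-number-independent way, a four-element subset $S_0 \subseteq S$ that is an affine image of the four basic slopes. Any drawing of $H$ with $S$ restricts to a drawing of each gadget with $S$, and with well-chosen gadgets (for instance several ``twisted'' copies of $K_4$ forced to share directions by combinatorial constraints, together with Petersen-type rigidity gadgets) one hopes to extract such an $S_0$. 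Once $S_0$ is identified, Theorem \ref{thm_4basic_slopes} applied to an arbitrary cubic graph $G$ yields a drawing of $G$ using $S_0 \subseteq S$, so $S$ is good.

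The main obstacle, and the reason this is a conjecture rather than a theorem, is that $|S|$ may be arbitrarily large while $H$ is fixed and finite, so the information transmitted by a single $H$-drawing is limited. A pathological $S$ could contain enough flexibility to realize each gadget, but using different four-element subsets for different gadgets, preventing the extraction of a uniform $S_0$. Overcoming this requires forcing coordination between gadgets, most plausibly by identifying vertices or edges across gadgets so that the matching structure exploited in the proof of Theorem \ref{thm_4basic_slopes} is already partially rigid; a complementary approach is a compactness/K\"onig-style argument that pushes any counterexample to a finite substructure, which can then be absorbed into $H$. Making these two ideas combine cleanly, uniformly over all slope sets $S$, is the principal difficulty I expect.
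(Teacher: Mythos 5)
This statement is left as an open conjecture in the paper; there is no proof of it there, and your proposal does not supply one either. What you have written is a reasonable strategy sketch, and to your credit you identify the obstruction yourself: a drawing of a fixed finite cubic graph $H$ with an arbitrary (possibly very large) slope set $S$ need not single out a four-element subset $S_0\subseteq S$ that is an affine image of the four basic slopes, because different gadgets of $H$ may be realized with different four-tuples of slopes, or indeed with slopes not forming such an image at all. The proof of Theorem \ref{karakterizacio} works only because $|S|=4$ forces the pigeonhole on the six edges of $K_4$; nothing analogous is available once $|S|$ exceeds four, and neither the ``glued gadgets'' idea nor the compactness remark is developed far enough to replace it. In particular, the paper's own discussion makes clear that it is not even known whether $K_4$ alone suffices, or whether additional components (Petersen graph, other small graphs) are needed, which is precisely the uniformity problem you run into.

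So the concrete gap is the entire ``only if $\Rightarrow$ good'' direction for slope sets of size greater than four: you would need to show that some fixed finite $H$, drawable with $S$, certifies that \emph{every} cubic graph is drawable with $S$, and no mechanism in the paper (the matching/Brouwer argument of Theorem \ref{subthm_4slopes}, the $M$-cut argument of Theorem \ref{thm_4basic_slopes}, or the affine characterization of Theorem \ref{karakterizacio}) extends to extract such a certificate from a single drawing when $S$ is large. Your outline is a fair restatement of why the conjecture is plausible and where the difficulty lies, but it should not be presented as a proof.
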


This finite graph would be the disjoint union of $K_4$, maybe the Petersen graph and other small graphs. We could not even rule out the possibility that $K_4$ (or maybe another, connected graph) is alone sufficient. Note that we can define a partial order on the graphs this way. Let $G < H$ if any set of slopes that can be used to draw $H$ can also be used to draw $G$. This way of course $G\subset H \Rightarrow G<H$ but what else can we say about this poset?

Is it possible to use this new method to prove that the slope parameter of cubic graphs is also four?

The main question remains to prove or disprove whether the slope number of graphs with maximum degree four is unbounded.

\newpage
\chapter{Obstacle number}

\section{Introduction}

Consider a set $P$ of points in the plane and a set of closed polygonal obstacles whose vertices together with the points in $P$ are in {\em general position}, that is, no {\em three} of them are on a line. The corresponding {\em visibility graph} has $P$ as its vertex set, two points $p,q\in P$ being connected by an edge if and only if the segment $pq$ does not meet any of the obstacles. Visibility graphs are extensively studied and used in computational geometry, robot motion planning, computer vision, sensor networks, etc.; see \cite{BKOS00}, \cite{G07}, \cite{OR97}, \cite{O99}, \cite{Ur00}.

Recently, Alpert, Koch, and Laison \cite{AKL09} introduced an interesting new parameter of graphs, closely related to visibility graphs. Given a graph $G$, we say that a set of points and a set of polygonal obstacles as above constitute an {\em obstacle representation} of $G$, if the corresponding visibility graph is isomorphic to $G$. A representation with $h$ obstacles is also called an $h$-obstacle representation. The smallest number of obstacles in an obstacle representation of $G$ is called the {\em obstacle number} of $G$ and is denoted by ${\rm obs}(G)$. If we are allowed to use only {\em convex} obstacles, then the corresponding parameter ${\rm obs}_{c}(G)$ is called the {\em convex obstacle number} of $G$. Of course, we have ${\rm obs}(G)\leq {\rm obs}_c(G)$ for every $G$, but the two parameters can be very far apart.

A special instance of the obstacle problem has received a lot of attention, due to its connection to the Szemer\'edi-Trotter theorem on incidences between points and lines~\cite{ST83a}, \cite{ST83b}, and other classical problems in incidence geometry~\cite{PA95}. It is an exciting open problem to decide whether the obstacle number of ${\overline{K}}_n$, the empty graph on $n$ vertices, is $O(n)$ if the obstacles must be {\em points}. The best known upper bound is
% Deniz changed:
%$n2^{O\sqrt{\log n}}$; see
$n2^{O(\sqrt{\log n})}$; see
% Deniz added:
Pach
% Deniz changed:
\cite{Pach03}, Dumitrescu et al.~\cite{DPT09}, Matou\v sek~\cite{M09}, and Aloupis et al. \cite{A+10conf}.

Alpert et al.~\cite{AKL09} constructed a bipartite graph and a split graph (a graph whose vertex set is the union of a complete graph and an independent set), both with a fixed number of vertices, with obstacle number
% Deniz inserted:
at least
{\em two}. In \cite{mps10} another graph, whose vertex set is the union of two complete subgraphs, was shown to have obstacle number at least two. 
Consequently, no graph of obstacle number {\em one} can contain a subgraph isomorphic to these graphs. Using this and some extremal graph theoretic tools developed by Erd\H os, Kleitman, Rothschild, Frankl, R\"odl, Pr\"omel, Steger, Bollob\'as, Thomason and others, the following was proved.

\begin{theorem}[\cite{mps10}]
\label{ult}
For any fixed positive integer $h$, the number of graphs on $n$ (labeled) vertices with obstacle number at most $h$ is at most $2^{o(n^2)}.$
\end{theorem}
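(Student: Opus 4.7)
My plan is to reduce the general case to the case $h = 1$ and then invoke extremal graph theory. First I would establish the base case: $\mathcal{P}_1 := \{G : {\rm obs}(G) \leq 1\}$ is hereditary, because deleting a vertex from a $1$-obstacle representation of $G$ yields such a representation of $G - v$. The three forbidden examples emphasised in the excerpt --- a bipartite graph and a split graph from \cite{AKL09}, together with the two-clique graph from \cite{mps10}, each with obstacle number at least $2$ --- ensure that $\mathcal{P}_1$ contains neither all bipartite graphs, nor all split graphs, nor all graphs that are a union of two cliques with arbitrary edges between them. By the Alekseev-Bollob\'as-Thomason theorem on the growth rate of hereditary graph properties, the coloring number of $\mathcal{P}_1$ is therefore $1$, and $|\mathcal{P}_{1,n}| = 2^{o(n^2)}$.

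Next I would prove the structural reduction: every $G$ with ${\rm obs}(G) \leq h$ is the intersection of $h$ graphs in $\mathcal{P}_1$. Indeed, fix any $h$-obstacle representation $(P, \{O_1, \ldots, O_h\})$ of $G$ and let $G_k$ be the visibility graph of $P$ with $O_k$ as the sole obstacle; then ${\rm obs}(G_k) \leq 1$, and a segment $p_ip_j$ avoids every $O_k$ if and only if it avoids each separately, so $G = G_1 \cap G_2 \cap \cdots \cap G_h$ as edge sets on the common vertex set.

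Finally, each ordered $h$-tuple $(G_1, \ldots, G_h) \in \mathcal{P}_{1,n}^{\,h}$ determines at most one $G \in \mathcal{P}_{h,n}$ via this intersection, so
\[
|\mathcal{P}_{h,n}| \;\leq\; |\mathcal{P}_{1,n}|^h \;\leq\; \bigl(2^{o(n^2)}\bigr)^h \;=\; 2^{o(n^2)},
\]
where the last equality uses that $h$ is a fixed constant. The main obstacle is really the base case: we need all three forbidden structures simultaneously in place so that Alekseev-Bollob\'as-Thomason forces the coloring number of $\mathcal{P}_1$ down to $1$. Once this is secured, the reduction from arbitrary $h$ to $h = 1$ is an easy geometric observation and the counting is immediate.
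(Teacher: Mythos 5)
Your proposal is correct and follows essentially the same route as the paper's (cited) argument: the paper attributes Theorem \ref{ult} to \cite{mps10} and sketches exactly this plan --- the bipartite, split, and two-clique examples with obstacle number at least two force the hereditary class of $1$-obstacle-representable graphs to have speed $2^{o(n^2)}$ via the hereditary-property speed theorems (Alekseev--Bollob\'as--Thomason and relatives), and the decomposition $G=G_1\cap\cdots\cap G_h$ into single-obstacle visibility graphs (used explicitly elsewhere in the paper) handles fixed $h$. The only detail worth adding is the trivial padding step when ${\rm obs}(G)<h$, e.g.\ intersecting with copies of $K_n$, which does not affect the count.
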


Since the number of bipartite graphs with $n$ labeled vertices is $\Omega(2^{n^2/4})$, this also implies that there exist bipartite graphs with arbitrarily large obstacle number.

For every sufficiently large $n$, Alpert et al. constructed a 
graph with $n$ vertices with obstacle number at least $\Omega\left(\sqrt{\log n}\right)$. 
By using the existence of graphs with obstacle number at least $2$  and
a result by Erd{\H{o}}s and Hajnal \cite{EH89},
we show the existence of graphs with much larger obstacle numbers.

\begin{theorem}
\label{thm_obstacle1}
For every $\varepsilon>0$, there exists an integer $n_0=n_0(\varepsilon)$ such that for all $n\geq n_0$, there are graphs $G$ on $n$ vertices such that their obstacle numbers satisfy
$${\rm obs}(G)\geq \Omega\left(n^{1-\varepsilon}\right).$$
\end{theorem}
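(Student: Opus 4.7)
The plan is to combine three ingredients: the existence of a small graph $H_{0}$ with ${\rm obs}(H_{0})\ge 2$ (from \cite{AKL09, mps10}), the monotonicity of the obstacle number under taking induced subgraphs, and the Erd\H{o}s--Hajnal theorem of \cite{EH89}.

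First, I would observe that the obstacle number is monotone under induced subgraphs: any obstacle representation of $G$ restricts to an obstacle representation of every induced subgraph $G[V']$ by simply keeping the same obstacles and discarding the unwanted points. Consequently, if ${\rm obs}(G)\le 1$ then $G$ avoids $H_{0}$ as an induced subgraph, so the class $\mathcal{F}_{1}$ of graphs with obstacle number at most $1$ is a hereditary class of $H_{0}$-induced-free graphs.

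Next, I would use the following decomposition. Given an obstacle representation of $G$ with obstacles $O_{1},\ldots,O_{h}$ on a common point set $P$, let $G_{i}$ be the visibility graph on $P$ whose sole obstacle is $O_{i}$. Then $E(G)=\bigcap_{i=1}^{h}E(G_{i})$, and each $G_{i}$ lies in $\mathcal{F}_{1}$, hence is $H_{0}$-induced-free. The key quantitative tool is then the Erd\H{o}s--Hajnal theorem: there is a constant $c_{0}=c_{0}(H_{0})>0$ such that every $H_{0}$-induced-free graph on $N$ vertices contains a homogeneous set of size at least $\exp(c_{0}\sqrt{\log N})$. I would apply this in a Ramsey-type iteration across $G_{1},\ldots,G_{h}$: at each step, either one finds an independent set in the current $G_{i}$ (which is automatically independent in $G$, since $E(G)\subseteq E(G_{i})$), or one passes to a clique in $G_{i}$ and continues the iteration with the remaining obstacles restricted to that clique. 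After $h$ rounds one either obtains a large independent set in $G$ or a large common clique across all $G_{i}$'s, which is a clique in $G$.

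To exhibit graphs with large obstacle number, I would then take $G$ to be a typical random graph $G(n,1/2)$, whose cliques and independent sets have size only $O(\log n)$ with high probability, and deduce a lower bound on $h$ by forcing the iteration to terminate. The main obstacle is the quantitative step: a naive iteration of the $\exp(c_{0}\sqrt{\log N})$ bound compounds very poorly and only yields a polylogarithmic lower bound on $h$. To reach $\Omega(n^{1-\varepsilon})$ for every $\varepsilon>0$, the iteration must preserve set sizes at a polynomial scale, which requires a more refined argument---for instance, combining the Erd\H{o}s--Hajnal theorem with a polynomial bipartite strengthening so that each passage to a clique in some $G_{i}$ incurs only a polynomial loss in the ambient set size, together with careful bookkeeping of how the different obstacles interact across the decomposition $E(G)=\bigcap_{i}E(G_{i})$.
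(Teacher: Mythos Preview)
Your proposal has a genuine gap. The iterated Erd\H{o}s--Hajnal scheme you describe cannot reach $\Omega(n^{1-\varepsilon})$: if at each step you pass from a set of size $N$ to a homogeneous set of size $\exp(c_{0}\sqrt{\log N})$, then $\log(\text{size})$ shrinks by a square root at every stage, and you exhaust the random graph's $O(\log n)$ threshold after only $O(\log\log\log n)$ rounds. You acknowledge this and appeal to a ``polynomial bipartite strengthening'' of Erd\H{o}s--Hajnal, but such a strengthening for an arbitrary forbidden induced subgraph $H_{0}$ is precisely the (open) Erd\H{o}s--Hajnal conjecture; you cannot invoke it here. So as written, the argument does not prove the theorem.

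The paper's proof bypasses this bottleneck by a geometric rather than a combinatorial decomposition. Take $G$ with $\hom(G)<2\log n$, fix an optimal obstacle representation, and slice the plane into $m=n^{1-\varepsilon}$ vertical strips, each containing $n/m$ vertices. Each induced subgraph $G_i$ on a strip still has $\hom(G_i)<2\log n$, so by Erd\H{o}s--Hajnal (applied once, in the contrapositive form) each $G_i$ is $t$-universal and hence contains the bad graph $H$; thus ${\rm obs}(G_i)\ge 2$. The crucial point is then spatial: if every obstacle relevant to $G_i$ lay in the exterior face of the drawing of $G_i$, they could be merged into a single obstacle, contradicting ${\rm obs}(G_i)\ge 2$; so some obstacle lies in an \emph{interior} face of $G_i$. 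Interior faces of graphs in disjoint vertical strips are pairwise disjoint, so these $m$ obstacles are all distinct, yielding $h\ge m=n^{1-\varepsilon}$. The idea you are missing is exactly this geometric separation, which manufactures $m$ disjoint ``containers'' each forced to hold its own obstacle; nothing in the intersection decomposition $E(G)=\bigcap_i E(G_i)$ provides such disjointness.
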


%It turns out that for the proof of Corollary~\ref{cor2}, in the place of Theorem~\ref{CE6norCC4HasNoOneObstacleRep} we can use the much simpler fact that not all graphs have obstacle number at most {\em one}.

In Section~\ref{padmini}, we improve on the last two corollaries, using some estimates on the number of different {\em order types} of $n$ points in the Euclidean plane, discovered by Goodman and Pollack~\cite{GP86}, \cite{GP93} (see also Alon~\cite{Al86}). We establish the following results.

\begin{theorem}
\label{enumeration}
For any fixed positive integer $h$, the number of graphs on $n$ (labeled) vertices with obstacle number at most $h$ is at most $$2^{O(hn\log^2n)}.$$
\end{theorem}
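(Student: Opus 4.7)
The plan is to encode any obstacle representation by a purely combinatorial object of controlled size and then bound the number of such objects by invoking the classical Goodman--Pollack estimate on the number of order types of a planar point set.

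First, I would canonicalize the obstacles. Given any obstacle representation of $G$ with $n$ points and $h$ obstacles, the role of a single obstacle $O_k$ in determining the graph is captured entirely by the subset of the $\binom{n}{2}$ segments $\overline{p_ip_j}$ that $O_k$ meets. I would argue that one may replace $O_k$ by a simple polygon with at most $f(n)$ vertices whose blocking pattern is identical, for some modest $f(n)$. A direct topological argument, placing polygon vertices at appropriate intersection points of $\partial O_k$ with the arrangement of the $\binom{n}{2}$ segments, gives $f(n)=O(n^{2})$; the heart of the proof will be to sharpen this to roughly $f(n)=O(n\log n)$ by exploiting the connectedness of $O_k$ together with the general-position assumption on the combined point set.

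Second, I would consider the extended point configuration consisting of the $n$ original vertices together with all vertices of the $h$ canonicalized obstacles; let $N=n+hf(n)$ be its cardinality. By the Goodman--Pollack theorem \cite{GP86,GP93} (see also \cite{Al86}), the number of order types of $N$ points in general position in the plane is at most $2^{O(N\log N)}$. The order type, together with combinatorial bookkeeping that indicates which points are the original vertices and which vertices bound each obstacle (and in what cyclic order), completely determines the visibility graph. This extra information carries at most $O(N\log N)$ additional bits, so the total number of graphs realizable with at most $h$ obstacles is $2^{O(N\log N)}$. Taking $f(n)=O(n\log n)$ gives $N=O(hn\log n)$ and hence the bound $2^{O(hn\log^{2}n)}$ claimed in the theorem.

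The main obstacle is Step 1: obtaining a near-linear bound on the combinatorial complexity of a single obstacle. A crude $f(n)=O(n^{2})$ bound is easy but yields only $2^{O(hn^{2}\log n)}$, which is too weak. The improvement to $f(n)=O(n\log n)$ requires a careful analysis of how a simply connected polygonal region interacts with the arrangement of the $\binom{n}{2}$ segments; in particular one must argue that redundant features of the boundary (long detours that do not change which segments are blocked) can be pruned without affecting the graph. Once this reduction is in place, the application of Goodman--Pollack and the combinatorial accounting in Steps 2 and 3 are routine.
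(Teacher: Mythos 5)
Your overall route is the same as the paper's: replace each obstacle by a polygon of small complexity and then count order types of the combined configuration of graph vertices and obstacle vertices via the Goodman--Pollack bound (Theorem~\ref{GoodmanPollack}), with the bookkeeping you describe. However, the step you yourself flag as ``the main obstacle'' --- showing that each obstacle can be replaced by a polygon with only $O(n\log n)$ vertices --- is exactly the substance of the proof, and your proposal does not supply it; as you note, without it you only get $f(n)=O(n^2)$ and the bound $2^{O(hn^2\log n)}$, which is too weak (in particular it would not yield the $\Omega(n/\log^2 n)$ lower bound of Theorem~\ref{thm_obstacle2}). The missing idea is not a pruning argument against the arrangement of all $\binom{n}{2}$ segments, but the following: draw only the edges of $G$ (the visibility graph) as straight segments. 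Since every edge of $G$ is unobstructed and the obstacle avoids the vertices, each obstacle lies entirely inside a single face of this drawing, and one may replace the obstacle by a slightly shrunken copy of that whole face. The complexity of a single face in a drawing of a graph on $n$ vertices is $O(n\log n)$, independently of the number of edges, by the theorem of Arkin, Halperin, Kedem, Mitchell and Naor (Theorem~\ref{Arkin}); this is the source of the $\log$ factor and it is a quoted result, not something one re-derives by boundary pruning.

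Note also that your canonicalization requirement is stronger than necessary, and that is part of why your Step 1 looks hard: you ask for a polygon whose \emph{blocking pattern} on all $\binom{n}{2}$ segments is identical to that of the original obstacle. The paper's replacement does not preserve this pattern --- the shrunken face may block additional non-edges --- but it never blocks an edge of $G$ (the face is bounded by edges of the drawing) and it blocks at least every non-edge the original obstacle blocked, so the visibility graph, which is all that matters, is unchanged. Relaxing ``identical blocking pattern'' to ``same visibility graph'' is precisely what lets the single-face complexity bound do the work; with that substitution your Steps 2 and 3 (order types of the $N=n+O(hn\log n)$ points plus $O(N\log N)$ bits of labeling) go through as in the paper and give $2^{O(hn\log^2 n)}$.
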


\begin{theorem}\label{thm_obstacle2}
For every $n$, there exist graphs $G$ on $n$ vertices with obstacle numbers
$${\rm obs}(G)\geq \Omega\left({n}/{\log^2n}\right).$$
\end{theorem}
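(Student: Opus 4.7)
The plan is to derive Theorem~\ref{thm_obstacle2} as an immediate counting consequence of Theorem~\ref{enumeration} combined with the trivial fact that the total number of labeled graphs on $n$ vertices is $2^{\binom{n}{2}}$. So the real content of the argument lies in establishing Theorem~\ref{enumeration}, and the deduction of Theorem~\ref{thm_obstacle2} from it is just a pigeonhole.

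First I would set up the counting inequality. Let $h = h(n)$ be the maximum obstacle number achieved by any graph on $n$ labeled vertices. Then every labeled graph on $[n]$ has obstacle number at most $h$, so by Theorem~\ref{enumeration} the number of such graphs is at most $2^{C h n \log^2 n}$ for some absolute constant $C>0$. On the other hand, this number equals $2^{\binom{n}{2}}$. Comparing exponents gives
\[
\binom{n}{2} \le C h n \log^2 n,
\]
whence $h \ge \Omega(n/\log^2 n)$. Since $h$ is realized by some concrete graph $G$ on $n$ vertices, this produces the desired example.

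The only thing one has to double-check is that the hypothesis of Theorem~\ref{enumeration} is applied correctly: the bound there is $2^{O(hn\log^2 n)}$ \emph{for fixed $h$}, and we need it to remain valid as $h$ grows with $n$. So in the plan I would first verify (by glancing at the proof of Theorem~\ref{enumeration} in Section~\ref{padmini}) that the constant hidden in the $O(\cdot)$ is absolute, i.e.\ does not depend on $h$. This is the only genuine obstacle in turning the heuristic into a proof; assuming the constant is uniform in $h$, the pigeonhole step above goes through verbatim. If the proof of Theorem~\ref{enumeration} proceeds (as suggested by the reference to Goodman--Pollack order types) by choosing $n$ point positions in one of at most $2^{O(n\log n)}$ order types and then for each obstacle selecting one of at most $2^{O(n\log n)}$ combinatorially distinct placements, then the total count is $\bigl(2^{O(n\log n)}\bigr)^{h+1}\cdot 2^{O(n\log n)} = 2^{O(hn\log n)}$, which has an absolute constant and is in fact slightly stronger than what is quoted.

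Thus the proof of Theorem~\ref{thm_obstacle2} reduces to one line: by Theorem~\ref{enumeration}, $2^{\binom{n}{2}} \le 2^{O(hn\log^2 n)}$ forces $h = \Omega(n/\log^2 n)$, so there must exist a graph on $n$ vertices attaining this obstacle number. No construction is needed; the existence is purely a counting/probabilistic statement, and the specific example remains implicit.
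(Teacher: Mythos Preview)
Your proposal is correct and is exactly the paper's argument: the paper remarks immediately after stating Theorem~\ref{thm_obstacle2} that it ``directly follows from Theorem~\ref{enumeration}'' by comparing $2^{O(hn\log^2 n)}$ against the total count $2^{\Omega(n^2)}$ of labeled graphs. Your extra care in checking that the implicit constant in Theorem~\ref{enumeration} is absolute (independent of $h$) is well placed; in the paper's proof of Theorem~\ref{enumeration} this is clear, since one encodes the representation by the order type of $N \le n + c_1 h n\log n$ points and applies the Goodman--Pollack bound $2^{O(N\log N)}$, yielding a genuinely uniform constant.
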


Note that the last statement directly follows from Theorem~\ref{enumeration}. Indeed, since the total number of (labeled) graphs with $n$ vertices is $2^{\Omega(n^2)}$, as long as
$2^{O(hn\log^2n)}$ is smaller that this quantity, there is a graph with obstacle number larger than $h$.

We prove a slightly better bound for convex obstacle numbers.

\begin{theorem}\label{convex}
For every $n$, there exist graphs $G$ on $n$ vertices with convex obstacle numbers
$${\rm obs}_c(G)\geq \Omega\left({n}/{\log n}\right).$$
\end{theorem}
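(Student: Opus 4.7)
The plan is to mirror the counting argument of Theorem~\ref{thm_obstacle2} and to exploit convexity in order to save one logarithmic factor. In the general case of Theorem~\ref{enumeration} each obstacle a priori requires many combinatorial parameters to describe; for convex obstacles I would first prove that each one may be replaced by a convex polygon with at most $2n+O(1)$ vertices, so that the total representation involves only $O(hn)$ points rather than $\Omega(hn\log n)$.

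For the structural reduction, given a convex obstacle representation $(P,\mathcal{O})$ with $|P|=n$, for each $O\in\mathcal{O}$ and each $p\in P$ the two tangent lines from $p$ to $O$ bound a closed tangent cone $H_p$ with apex $p$ that contains $O$. I would set $O'=\bigl(\bigcap_{p\in P}H_p\bigr)\cap B$, where $B$ is a large bounding box containing $P$ and $\mathcal{O}$; then $O'$ is a convex polygon with at most $2n+4$ vertices, and $O\subseteq O'\subseteq H_p$ for every $p$. The key claim is that $O'$ induces the same blocking relation on $P$ as $O$. One direction is immediate since $O\subseteq O'$. For the converse, suppose $p,q\in P$ and $pq$ meets $O'$. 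Because $p$ is the apex of $H_p\supseteq O'$, a segment starting at $p$ can enter $H_p$ only in directions pointing into the cone, so $q\in H_p$; symmetrically $p\in H_q$. Now $q\in H_p$ means the ray from $p$ in the direction of $q$ passes through $O$, so either $pq$ already meets $O$ or $q$ lies strictly between $p$ and $O$ on that ray. In the latter case, from $q$ the obstacle $O$ lies in the direction away from $p$, so $p$ falls outside the cone $H_q$, contradicting $p\in H_q$. Hence $pq$ meets $O$, and the visibility graph is preserved.

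For the counting step, after this reduction any representation of $G$ with $h$ convex obstacles is determined by at most $N=n+h(2n+4)=O(hn)$ planar points together with a labeling specifying which point belongs to $P$ and to which obstacle polygon. The bound of Goodman--Pollack~\cite{GP86,GP93} gives $2^{O(N\log N)}$ order types on $N$ points in general position, and the number of labelings is at most $(h+1)^N=2^{O(hn\log h)}$. The order type together with the labeling determines the visibility graph, since whether a segment $p_ip_j$ meets an obstacle polygon reduces to orientations of triples drawn from the labeled point set. Consequently the number of graphs on $n$ labeled vertices with convex obstacle number at most $h$ is at most $2^{O(hn\log(hn))}$.

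Comparing with the $2^{\Omega(n^2)}$ graphs on $n$ labeled vertices, if every such graph admitted a representation with $h$ convex obstacles then $hn\log(hn)\geq \Omega(n^2)$; assuming $h\leq n$ (otherwise the conclusion is trivial) this yields $h\geq\Omega(n/\log n)$, producing a graph whose convex obstacle number attains the claimed lower bound. The hardest part of the plan is the structural reduction: one must justify general position by a small perturbation of the points and the tangent directions, and must handle the case where $P$ lies entirely on one side of $O$, in which $\bigcap_{p\in P}H_p$ can be unbounded and only becomes a polygon after intersecting with the box $B$, contributing the extra $O(1)$ vertices that are swallowed in the $2n+O(1)$ bound.
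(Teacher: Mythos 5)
Your counting skeleton (order types of the $O(hn)$ points formed by the vertices and the obstacle polygons, via Theorem~\ref{GoodmanPollack}, compared against $2^{\binom{n}{2}}$) is sound and would indeed give $\Omega(n/\log n)$, but the structural reduction on which it rests is false as stated. Take $P$ to consist of $p$ and $q$ (or $q$ plus a cluster of vertices near $p$) with $q$ lying on a ray from $p$ that crosses the obstacle $O$, the obstacle lying strictly beyond $q$. Then $q\in H_p$ because the ray from $p$ through $q$ meets $O$, $q\in H_r$ for every $r$ near $p$ for the same reason, and $q\in H_q$ vacuously since $q$ is the apex of its own cone; hence $q\in O'=\bigl(\bigcap_{r}H_r\bigr)\cap B$. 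Now the segment $pq$ meets $O'$ (at $q$) although it misses $O$, and in fact every segment incident to $q$ is blocked by $O'$, so $q$ becomes isolated in the new visibility graph: the blocking relation is genuinely changed, not merely degenerate. Your proof of the converse breaks exactly at this point: when the only intersection of $pq$ with $O'$ is the endpoint $q$, you cannot deduce ``symmetrically $p\in H_q$,'' because the apex belongs to its cone for free. This failure mode is different from the issues you flag at the end (unboundedness of the cone intersection, general position), so the plan as written has a real gap.

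The gap is repairable, but it needs an explicit extra step: any vertex of $P$ lying in $O'$ is an extreme point of $O'$ (it is the apex of its own tangent cone) and lies at positive distance from $O$, so you can clip $O'$ with one additional halfplane per such vertex, chosen close enough to the vertex that $O$ remains inside; after clipping, no point of $P$ lies in the modified obstacle, every intersection of a blocked segment with it occurs at an interior point of the segment, and your cone argument then goes through, with only $O(n)$ extra sides. You must also address that the resulting point set is not in general position (each side of $O'$ lies on a tangent line through a vertex of $P$, creating collinearities), so either invoke an order-type count that allows degeneracies or perturb once more before applying Goodman--Pollack. For comparison, the paper's proof of Theorem~\ref{convex} sidesteps all of this geometry: it encodes each convex obstacle by rotating an oriented tangent line around it and recording the circular sequence of $2n$ signed events $v_+,v_-$ at which the line sweeps over the vertices; this sequence alone determines all visibilities with respect to that obstacle, so there are at most $((2n)!)^h<(2n)^{2hn}$ graphs with convex obstacle number at most $h$, and the same $\Omega(n/\log n)$ bound follows with far less machinery. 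Your route, once patched, is a legitimate alternative and has the mild virtue of reducing convex obstacles to $O(n)$-gons, but it is considerably heavier than needed for this bound.
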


If we only allow segment obstacles, we get an even better bound.
Following Alpert et al., we define the {\em segment obstacle number} ${\rm obs}_s(G)$ of a graph $G$ as the minimal number of obstacles in an obstacle representation of $G$, in which each obstacle is a straight-line segment.

\begin{theorem}\label{segment}
For every $n$, there exist graphs $G$ on $n$ vertices with segment obstacle numbers
$${\rm obs}_s(G)\geq \Omega\left({n^2}/{\log n}\right).$$
\end{theorem}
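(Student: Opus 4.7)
The plan is to mimic the counting arguments used for Theorems \ref{enumeration} and \ref{convex}, but to exploit the fact that segment obstacles are much more parsimonious: each one is specified by just two endpoints. Fix $h$, and consider any graph $G$ on $n$ labeled vertices with ${\rm obs}_s(G)\le h$. Pick such a representation and let $N=n+2h$ denote the total number of points used (the $n$ vertex points together with the $2h$ segment endpoints). I will argue that the number of combinatorially distinct representations, and therefore the number of such graphs, is at most $2^{O(N\log N)}$; comparing with the $2^{\binom{n}{2}}$ total labeled graphs on $n$ vertices then forces $h=\Omega(n^2/\log n)$.

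First I would invoke the Goodman--Pollack bound that the number of order types of $N$ points in the Euclidean plane is at most $2^{O(N\log N)}$, exactly as in Section~\ref{padmini}. The key observation is that for segment obstacles the visibility graph is determined by the order type of the full configuration together with (i)~a choice of which $n$ of the $N$ points are vertices of $G$, and (ii)~a perfect matching on the remaining $2h$ points specifying the pairs that form segment obstacles. This is because whether a vertex--vertex segment $pq$ crosses an obstacle segment $ab$ is determined solely by the relative orientations $(p,q,a)$, $(p,q,b)$, $(a,b,p)$, $(a,b,q)$ — information stored in the order type. Accordingly, the total number of distinct graphs with ${\rm obs}_s(G)\le h$ is at most
\[
2^{O(N\log N)}\cdot\binom{N}{n}\cdot (2h-1)!! \;\le\; 2^{O((n+h)\log(n+h))},
\]
since the combinatorial factors are absorbed into the exponent.

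To finish, I would set $h=cn^2/\log n$ for a small constant $c>0$ and check that $(n+h)\log(n+h)$ is still smaller than $\binom{n}{2}$. For this choice, $N=\Theta(n^2/\log n)$, so $\log N=\Theta(\log n)$ and $N\log N=\Theta(n^2)$; choosing $c$ small enough makes the implied constant smaller than $1/2$, so that $2^{O((n+h)\log(n+h))}<2^{\binom{n}{2}}$. Hence some graph on $n$ vertices has segment obstacle number exceeding $cn^2/\log n$, proving Theorem~\ref{segment}.

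The only delicate point — and in my view the main obstacle worth thinking about carefully — is verifying that the visibility graph really is a function of the order type alone, rather than of finer metric data. The standard general position hypothesis (no three points of the configuration collinear) ensures this: each of the finitely many incidence predicates "segment $pq$ meets segment $ab$" is decided by the signs of the four orientation determinants listed above. Handling this cleanly lets us replace the convex-obstacle argument of Theorem~\ref{convex} (which must pay an extra factor per obstacle for the combinatorial complexity of a convex polygon) by the cheaper segment count $2h$, which is exactly where the improvement from $n/\log n$ to $n^2/\log n$ comes from.
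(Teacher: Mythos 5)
Your argument is correct and is essentially the paper's own proof: the paper encodes a representation by the order type of the configuration consisting of the $n$ vertices together with the obstacle corner points (Theorem~\ref{sidelemma}, the $s=2$ case of the encoding used for Theorem~\ref{enumeration}), applies the Goodman--Pollack bound, and compares with $2^{\binom{n}{2}}$ exactly as you do. Your explicit check that the visibility graph depends only on the order type (via the four orientation predicates) and your bookkeeping of the vertex/endpoint labels are just a more detailed write-up of the same counting step.
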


%In Section~\ref{sec:CCNotAllAtMost1}, we prove Theorem~\ref{CE6norCC4HasNoOneObstacleRep}.

We then improve the bound for the general obstacle number as follows.

%Here we show the following slightly stronger bound.

\begin{theorem}\label{concave}
For every $n$, there exists a graph $G$ on $n$ vertices with
obstacle number
$${\rm obs}(G)\geq \Omega\left({n}/{\log n}\right).$$
\end{theorem}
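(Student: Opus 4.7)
The goal is to exhibit graphs $G$ on $n$ vertices with ${\rm obs}(G) = \Omega(n/\log n)$, improving on the $\Omega(n/\log^2 n)$ bound of Theorem \ref{thm_obstacle2}. My plan is to sharpen the enumeration estimate of Theorem \ref{enumeration} by a factor of $\log n$: I will show that the number of graphs on $n$ labeled vertices with obstacle number at most $h$ is in fact $2^{O(hn\log n)}$. Given this, the same counting comparison against the $2^{\binom{n}{2}}$ labeled graphs on $n$ vertices that drives Theorem \ref{thm_obstacle2} immediately forces $h\ge c\, n/\log n$ for some graph $G$, as claimed.

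To achieve the sharper enumeration, I will normalize the obstacles. Specifically, I aim to prove that in any obstacle representation of a graph on $n$ vertices with $h$ obstacles, each obstacle can be replaced, without changing the visibility graph, by a polygon with only $O(n)$ vertices. The intuition is that the only information about an obstacle that is relevant to the visibility graph is which of the $\binom{n}{2}$ segments spanned by the point set it crosses; and among simply-connected polygonal regions realizing a given crossing pattern one can pick a combinatorially minimal representative. Because these $\binom{n}{2}$ segments share only $n$ distinct endpoints (carrying much less freedom than $\binom{n}{2}$ arbitrary line segments would), I expect the minimal representative to have complexity $O(n)$, essentially one ``wrinkle'' per point of $G$.

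Once the normalization is in place, an entire representation is determined, up to combinatorial equivalence, by the order type of the $N = n + O(hn) = O(hn)$ points consisting of the graph vertices together with all obstacle vertices. The Goodman--Pollack bound \cite{GP86,GP93} then gives $2^{O(N\log N)} = 2^{O(hn\log n)}$ order types, assuming $h$ is polynomially bounded in $n$ (which is harmless, as the theorem is trivial otherwise). Plugging this into the counting argument of Theorem \ref{thm_obstacle2} yields the desired lower bound.

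The main obstacle is the normalization step for non-convex obstacles. In the convex case of Theorem \ref{convex}, the combinatorial type of the obstacle is rigidly controlled by $O(n)$ pieces of data (such as the cyclic sequence of tangent directions from the $n$ points), which is precisely what keeps the enumeration small there. For a general simply-connected polygon the boundary may be intricately non-convex, and the technical work I expect to have to do is to prune superfluous boundary features without altering the set of blocked segments: flattening concavities that avoid all point-pair segments, and straightening boundary arcs that traverse cells of the segment arrangement in which no interesting crossings occur. A natural way to organize this pruning is to push each boundary piece onto a union of supporting lines through pairs of points of $G$, using the fact that new vertices can only be created at intersections of such lines, and then charge each remaining vertex to a segment endpoint from $V(G)$.
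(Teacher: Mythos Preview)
Your proposal has a genuine gap at its central step: the normalization claim that every obstacle can be replaced by a polygon with only $O(n)$ vertices. You yourself flag this as ``the main obstacle,'' and the sketch you offer---pushing boundary arcs onto lines through point pairs and then charging remaining vertices to points of $V(G)$---is a heuristic, not an argument. Recall that the Arkin et al.\ face-complexity bound of $O(n\log n)$ is known to be tight (Matou\v sek--Valtr), so the face containing the obstacle cannot in general be simplified; you would therefore need to produce a proper connected subregion of that face blocking exactly the same set of non-edges and having strictly smaller complexity. Nothing in your outline establishes this, and the ``one wrinkle per point'' intuition is not obviously robust when the face boundary alternates among many edge-segments incident to a common vertex. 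Without this step, you are back to $N=O(hn\log n)$ encoding points and hence to Theorem~\ref{thm_obstacle2}.

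The paper sidesteps the normalization problem entirely by a different route. Rather than sharpening the enumeration, it bootstraps the weaker bound through a partition argument: order the vertices by $x$-coordinate and cut them into $n/k$ slabs of $k$ points each, with induced subgraphs $G_1,\ldots,G_{n/k}$. If ${\rm obs}(G)<n/2k$, then at least half of the $G_i$ satisfy ${\rm obs}(G_i)\le 1$, since otherwise more than $n/2k$ of them would each need an obstacle in an \emph{interior} face, and interior faces of distinct $G_i$ are separated by edges and so host distinct obstacles. One then bounds the probability that a random $G$ admits such a structure by
\[
n!\cdot 2^{n/k}\cdot \bigl(2^{o(k^2)-\binom{k}{2}}\bigr)^{n/2k}=2^{n\log n-\frac{kn}{4}+o(kn)},
\]
using Theorem~\ref{enumeration} at size $k$ (with $h=1$) to get the $2^{o(k^2)}$ factor. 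Taking $k$ a large multiple of $\log n$ drives this to zero, so some graph has ${\rm obs}(G)\ge n/2k=\Omega(n/\log n)$. This argument uses only the already-proved enumeration bound and a simple separation property of interior faces; no control on obstacle complexity beyond Theorem~\ref{Arkin} is required.
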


This comes close to answering the question in \cite{AKL09} whether
there exist graphs with $n$ vertices and obstacle number at least
$n$. However, we have no upper bound on the maximum obstacle
number of $n$-vertex graphs, better than $O(n^2)$.

%For the general 
%In the last section, we make some concluding remarks. In particular, we answer a question of Alpert et al.~\cite{AKL09} by showing that for every positive integer $h$, there exists a graph with obstacle number {\em precisely} $h$. We also discuss possible extensions of the above notions to higher dimensions.

%Our next theorem answers another question from \cite{AKL09}.

%\begin{theorem}\label{exactly}
%For every $h$, there exists a graph with obstacle number exactly
%$h$.
%\end{theorem}

Given any placement (embedding) of the vertices of $G$ in general position in the plane, a {\em drawing} of $G$ consists of the image of the embedding and the set of {\em open segments} connecting all pairs of points that correspond to the edges of $G$.  If there is no danger of confusion, we make no notational difference between the vertices of $G$ and the corresponding points, and between the pairs $uv$ and the corresponding open segments. The complement of the set of all points that correspond to a vertex or belong to at least one edge of $G$ falls into connected components. These components are called the {\em faces} of the drawing. Notice that if $G$ has an obstacle representation with a particular placement of its vertex set, then

(1) each obstacle must lie entirely in one face of the drawing, and

(2) each non-edge of $G$ must be blocked by at least one of the obstacles.

\section{Extremal methods and proof of Theorem \ref{thm_obstacle1}}

In order to prove Theorem~\ref{thm_obstacle1}, we need the following result, which shows that if $G$ avoids at least one induced subgraph with $k$ vertices, for some $k\ll \log n$, then the Erd\H os-Szekeres bound on ${\rm hom}(G)$ can be substantially improved. We note that $hom(G)$ for a graph $G$ is defined as the size of the largest clique or independent set in the graph. Also, a graph is $k$-universal if it contains every graph on $k$ vertices as induced subgraph.

\begin{theorem}[Erd\H os, Hajnal \cite{EH89}]\label{ehthm}
For any fixed positive integer $t$, there is an $n_0 = n_0(t)$ with the following property. Given any graph $G$ on $n>n_0$ vertices and any integer $k<2^{c\sqrt{log n}/t}$, either $G$ is $t$-universal or we have ${\rm hom}(G) \ge k$. (Here $c>0$ is a suitable constant.)
\end{theorem}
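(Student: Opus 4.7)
The plan is to prove this by induction on $t$. Since $G$ failing to be $t$-universal means there exists some specific graph $H$ on $t$ vertices that does not appear as an induced subgraph of $G$, it suffices to prove: every $H$-free graph $G$ on $n$ vertices with $|H| = t$ satisfies ${\rm hom}(G) \ge k$ for $k = 2^{c\sqrt{\log n}/t}$. The base case $t \le 2$ is immediate, because $H$-freeness with $|H| = 2$ forces $G$ to be either empty or complete, so ${\rm hom}(G) = n$.

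For the inductive step, I would fix any vertex $v \in V(H)$ and set $H_1 = H[N_H(v)]$ and $H_2 = H[V(H) \setminus N_H[v]]$, both on at most $t-1$ vertices. The key structural observation is that since $G$ is $H$-free, for every vertex $u \in V(G)$ either $N_G(u)$ is $H_1$-free or $\overline{N}_G(u) \setminus \{u\}$ is $H_2$-free --- otherwise, combining $u$ with induced copies of $H_1$ in its neighborhood and of $H_2$ in its non-neighborhood would produce an induced copy of $H$ in $G$. This dichotomy replaces the uniform ``split into halves'' trick of the classical Ramsey argument by a split which, on at least one side, is recursively tractable by a smaller forbidden subgraph.

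I would then run a greedy Ramsey-type procedure that maintains a candidate set $C \subseteq V(G)$ along with a growing clique $K$ (the independent-set case being symmetric): at each step, pick $u \in C$ and replace $C$ either by $N_G(u) \cap C$ (adding $u$ to $K$) or by $\overline{N}_G(u) \cap C$, always using the observation above to choose a side on which the induced subgraph is either $H_1$-free or $H_2$-free. After roughly $\sqrt{\log n}$ such steps, the candidate set still has size about $n^{1/2}$ in the right regime, and is now $H'$-free for some $H' \in \{H_1, H_2\}$ on at most $t-1$ vertices. At that point the induction hypothesis applied with parameter $t-1$ produces a homogeneous subset there of size $2^{c\sqrt{\log (n^{1/2})}/(t-1)}$, which combines with the previously collected vertices to yield the desired homogeneous set in $G$.

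The main obstacle will be calibrating the recursion to hit exactly the $2^{c\sqrt{\log n}/t}$ bound: the candidate size drops by taking a square root at each level (contributing the $\sqrt{\log}$ in the exponent via the identity $\sqrt{\log(n^{1/2})} = \sqrt{\log n}/\sqrt{2}$), while the contribution to the homogeneous set at each level is only $2^{c\sqrt{\log N}/(t-1)}$. The bookkeeping must verify that these quantities multiply (or rather, add in the exponent) correctly across the $t-1$ recursive levels to give $k \ge 2^{c\sqrt{\log n}/t}$, which is what pins down the admissible absolute constant $c$ and forces the choice of $n_0(t)$ so that all intermediate invocations of the induction hypothesis are legitimate.
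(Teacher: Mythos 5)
The first thing to say is that the thesis itself does not prove this statement: it is imported verbatim from Erd\H os and Hajnal \cite{EH89} and used as a black box in the proof of Theorem \ref{thm_obstacle1}, so your argument has to stand entirely on its own. It does not, because the ``key structural observation'' driving your induction is false. Knowing that $N_G(u)$ contains an induced copy of $H_1=H[N_H(v)]$ and that the non-neighborhood of $u$ contains an induced copy of $H_2=H[V(H)\setminus N_H[v]]$ only controls the adjacencies involving $u$ itself; the bipartite pattern \emph{between} the two copies is completely unconstrained, whereas $H$ prescribes a specific bipartite graph between $N_H(v)$ and $V(H)\setminus N_H[v]$. Concretely, take $H=P_4$ with $v$ its second vertex, so that $H_1=\overline{K_2}$ and $H_2=K_1$, and take $G=C_4$. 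Then $G$ is $P_4$-free, yet every vertex $u$ of $G$ has two nonadjacent neighbors (an induced copy of $H_1$) and a nonempty non-neighborhood (a copy of $H_2$): the four vertices involved induce $C_4$, not $P_4$, precisely because the cross-adjacencies come out wrong. So the dichotomy ``$N_G(u)$ is $H_1$-free or the non-neighborhood is $H_2$-free'' fails, and with it the greedy Ramsey-type recursion you build on it.

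This is not a bookkeeping issue that a more careful calibration of constants can repair; it is the essential difficulty of the problem. If a clean vertex-by-vertex dichotomy of this kind were available, the natural recursion would yield far stronger, essentially polynomial-size homogeneous sets, i.e.\ something in the spirit of the (still open) Erd\H os--Hajnal conjecture; the fact that the known bound is only $2^{c\sqrt{\log n}/t}$ reflects exactly the absence of such a reduction. The genuine proof in \cite{EH89} does induct on $t$, but the gain at each stage comes from a more delicate averaging/iteration mechanism, and the $\sqrt{\log n}$ in the exponent arises from balancing two competing regimes rather than from a single neighborhood split per forbidden vertex. Your reduction from $t$-universality to $H$-freeness and your base case $t\le 2$ are fine, but to salvage the write-up you would need an entirely different inductive engine — or, as the thesis does, simply cite \cite{EH89}.
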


We now prove Theorem~\ref{thm_obstacle1}.

\medskip
\begin{proof}%[\noindent{ of Theorem~\ref{thm_obstacle1}.} ]
For the sake of clarity of the presentation, we systematically omit all floor and ceiling functions wherever they are not essential. Let $H$ be a graph of $t$ vertices that does not admit a $1$-obstacle representation. Fix any $0<\varepsilon<1$, and choose an integer $N\ge n_0$, that satisfies the inequality
\begin{equation}\label{szamozott}
2^{c\sqrt{\varepsilon\log N}/t}  > 2\log N,
%c_1\varepsilon 2^{(c_2 \sqrt{\frac{\varepsilon\log n_0}{t}})}  > 2,
\end{equation}
where $c,n_0$ are constants that appear in the previous theorem.

For any $n\ge N$, we set $m=n^{1-\varepsilon}$.
According to a theorem of Erd\H os~\cite{Er47}, there exists a graph $G$ with $n$
vertices such that
$${\rm hom}(G)<2\log n< 2^{c\sqrt{\log (n/m)}/t}.$$

% Figure by Padmini
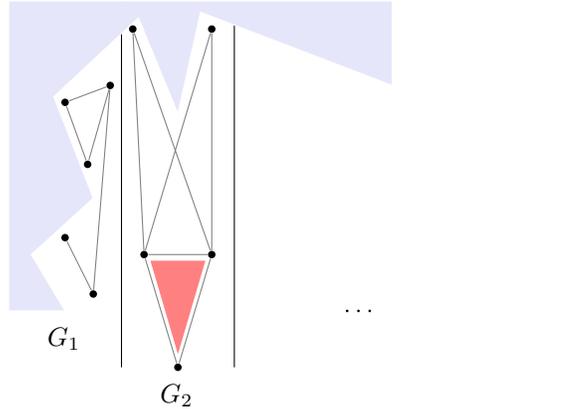
\begin{figure*}[htp]
{\centering
\begin{tikzpicture}[scale=0.75]
%\filldraw [blue!80!black!20!white] (0,0) -- (1,1) -- (2.5,1) -- (3.5,0) -- (2.5,-1) -- (1,-1) -- (0,0) -- cycle;
%\draw (0,0) -- (0,10);
\draw (2,2) -- (2,8.5);
\draw (4,2) -- (4,8.5);
\node [fill=black,circle,inner sep=0.1pt] at (6,3) {}; 
\node [fill=black,circle,inner sep=0.1pt] at (6.2,3) {}; 
\node [fill=black,circle,inner sep=0.1pt] at (6.4,3) {}; 
\node [label=left:$G_1$] at (1.6,2.5) {}; 
\node [label=left:$G_2$] at (3.6,1.5) {}; 
%\draw (6,2) -- (6,8.5);
\draw (10,2) -- (10,8.5);
\filldraw [draw=white, fill=blue!80!black!10!white] (0,3) -- (0,8.5) -- (6.8,8.5) -- (6.8,7) -- (3.4,8.3) -- (3,6.5) -- (2.3,8.2) -- (0.8,6.8) -- (1.5,5) -- (0.4,4) -- (1,3) -- (0,3) -- cycle;
%\draw (9,0) -- (9,10);
%\filldraw [draw=green, fill=red] (0,0) -- (4,0) -- (4,3) -- (6,1) -- (8,4);
\node [fill=black,circle,inner sep=1pt] (1) at (3.6,8) {}; 
\node [fill=black,circle,inner sep=1pt] (2) at (2.2,8) {}; 
\node [fill=black,circle,inner sep=1pt] (3) at (2.4,4) {}; 
\node [fill=black,circle,inner sep=1pt] (4) at (3.6,4) {}; 
\node [fill=black,circle,inner sep=1pt] (5) at (3,2) {}; 
\node [fill=black,circle,inner sep=1pt] (6) at (1.8,7) {}; 
\node [fill=black,circle,inner sep=1pt] (7) at (1.0,6.7) {}; 
\node [fill=black,circle,inner sep=1pt] (8) at (1.4,5.6) {}; 
\node [fill=black,circle,inner sep=1pt] (9) at (1.0,4.3) {}; 
\node [fill=black,circle,inner sep=1pt] (10) at (1.5,3.3) {}; 

\draw [gray] (1) -- (3) -- (4) -- (2) -- (3) -- (5) -- (4) -- (1);
\draw [gray] (6) -- (7) -- (8) -- (6) -- (10) -- (9);
\filldraw [draw=white, fill=red!50!white] (2.5,3.9) -- (3.5,3.9) -- (3,2.2) -- (2.5,3.9) -- cycle;
%\draw [red] (0,3) -- (0,-3);
\end{tikzpicture}
\caption[Division of graph into parts with interior obstacle]{Division of the graph into $m$ parts each with $n/m$ points. The light grey obstacle is a common exterior obstacle, while the darker one is an internal obstacle of $G_2$.} 
}
\label{fig:nepsOb}
\end{figure*}

Consider an obstacle representation of $G$ with the smallest number $h$ of obstacles. Suppose without loss of generality that in our coordinate system all points of $G$ have different $x$-coordinates.  By vertical lines, partition the plane into $m$ strips, each  containing $n/m$ points. Let $G_i$ denote the subgraph of $G$ induced by the vertices lying in the $i$-th strip $(1\le i\le m)$.

Obviously, we have
$${\rm hom}(G_i)\le {\rm hom}(G)<2^{c \sqrt{\log (n/m)}/t},$$
for every $i$. Hence, applying Theorem~\ref{ehthm} to each $G_i$ separately, we conclude that each must be $t$-universal. In particular, each $G_i$ contains an induced subgraph isomorphic to $H$. That is, we have ${\rm obs}(G_i)>1$ for every $i$, which means that each $G_i$ requires at least {\em two} obstacles.

As was explained at the end of the Introduction, each obstacle must be contained in an interior or in the exterior face of the graph. Therefore, in an $h$-obstacle representation of $G$, each $G_i$ must have at least one internal face that contains an obstacle, and there must be at least one additional obstacle (which may possibly contained in the interior face of every $G_i$). At any rate, we have $h>m=n^{1-\varepsilon},$ as required.  %\qed

\end{proof}

\section{Encoding graphs of low obstacle number}
\label{padmini}

The aim of this section is to prove Theorems~\ref{enumeration}--\ref{segment}. The idea is to find a short encoding of the obstacle representations of graphs, and to use this to give an upper bound on the number of graphs with low obstacle number. 

We need to review some simple facts from combinatorial geometry. Two sets of points, $P_1$ and $P_2$, in general position in the plane are said to have the same {\em order type} if there is a one to one correspondence between them with the property that the orientation of any triple in $P_1$ is the same as the orientation of the corresponding triple in $P_2$. Counting the number of different order types is a classical task, see e.g. 

\begin{theorem}[Goodman, Pollack~\cite{GP86}]\label{GoodmanPollack}
The number of different order types of $n$ points in general position in the plane is $2^{O(n\log n)}$.
\end{theorem}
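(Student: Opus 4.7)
The plan is to prove the bound incrementally, by adding the points one at a time and bounding the number of possible ``extensions'' of the order type at each step.

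First I would normalize: up to an orientation-preserving affine transformation, the first three points of $P$ may be fixed (they always form a triangle since the points are in general position). The order type of $P$ is then determined by the orientation of each triple $(p_i,p_j,p_k)$, i.e., by the sign of the determinant that tells on which side of the line $\overline{p_i p_j}$ the point $p_k$ lies.

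Now suppose that $p_1,\ldots,p_{k-1}$ have already been placed, and consider all $\binom{k-1}{2}$ lines spanned by pairs of these points. These lines form an arrangement in the plane, which by the standard count has at most $\binom{\binom{k-1}{2}}{2}+\binom{k-1}{2}+1=O(k^4)$ open cells. I would verify the key observation that if two candidate positions for $p_k$ lie in the same open cell of this arrangement, then for every $i<j<k$ the point $p_k$ lies on the same side of the line $\overline{p_ip_j}$ in both cases; hence the orientations of all triples $(p_i,p_j,p_k)$ are the same, so the order type of $p_1,\ldots,p_k$ is the same. Therefore, given the order type of the first $k-1$ points, there are at most $O(k^4)$ distinct order types obtainable by adding a $k$-th point.

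Multiplying these bounds over $k=4,\ldots,n$ and using $\prod_{k=4}^{n}O(k^4)=O(1)^n\cdot(n!)^4=2^{O(n\log n)}$ yields the desired estimate on the total number of order types. The bulk of the argument is routine once the cell-counting step is in place; the only part that requires a little care is the verification that a single open cell of the line arrangement really does determine the orientation of every new triple (as opposed to merely most of them), which I would handle by noting that each spanned line $\overline{p_ip_j}$ is one of the walls of the arrangement, so its two sides are never split within a cell. There is no genuine obstacle here, and no need for the more delicate machinery of allowable sequences or wiring diagrams used in Goodman and Pollack's original treatment, beyond what is needed for this quantitative bound.
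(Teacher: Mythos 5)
Your induction step contains a genuine gap. The cell-counting observation is fine as far as it goes: if you \emph{fix a realization} of the first $k-1$ points, then two candidate positions for $p_k$ in the same open cell of the arrangement of the $\binom{k-1}{2}$ spanned lines yield the same extended order type, so at most $O(k^4)$ extensions arise \emph{over that particular realization}. But what you need for the recursion $f(k)\le f(k-1)\cdot O(k^4)$ is that at most $O(k^4)$ order types on $k$ points restrict to a given order type $\omega$ on $k-1$ points, and this does not follow: an order type is a combinatorial class, not a point set, and different realizations of the same $\omega$ can admit different sets of extensions. An extension $\tau$ of $\omega$ is only guaranteed to be realizable over \emph{some} realization of $\omega$ (the one obtained by deleting $p_k$ from a realization of $\tau$), not over the one you happened to fix. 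Realization spaces of planar order types can be disconnected (this is the content of Mn\"ev-type universality in rank~3; explicit examples with around 13--14 points are known), so there is no single ``reference'' realization whose line arrangement sees all extensions, and the step ``given the order type of the first $k-1$ points, there are at most $O(k^4)$ distinct order types obtainable by adding a $k$-th point'' is exactly the unproved claim. Repairing it would require controlling the number of essentially different realizations of $\omega$ (e.g.\ connected components of its realization space), which is not elementary and is precisely where the machinery you set aside comes in.

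The standard proof — essentially the one behind the cited Goodman--Pollack bound, and also Alon's version — avoids the induction entirely: the order type of $(p_1,\ldots,p_n)$ is the sign vector of the $\binom{n}{3}$ orientation determinants, which are polynomials of degree $2$ in the $2n$ coordinates, and the Milnor--Thom/Oleinik--Petrovsky (or Warren) theorem bounds the number of sign patterns of $m$ polynomials of bounded degree in $v$ variables by $\bigl(O(m)\bigr)^{v}$; with $m=\binom{n}{3}$ and $v=2n$ this gives $2^{O(n\log n)}$ in one stroke. So the quantitative conclusion you aim for is correct, but your route to it does not close without an argument of this kind, and the claim that ``there is no genuine obstacle here'' is where the proposal goes wrong.
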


\noindent Observe that the same upper bound holds for the number of different order types of $n$ {\em labeled} points, because the number of different permutations of $n$ points is $n!=2^{O(n\log n)}$.

In a graph drawing, the \emph{complexity} of a face is the number of line segment sides bordering it.
The following result was proved by Arkin, Halperin, Kedem, Mitchell, and Naor (see Matou\v sek, Valtr~\cite{MV97} for its sharpness).

\begin{theorem}[Arkin et al.~\cite{AHK95}]\label{Arkin}
The complexity of a single face in a drawing of a graph with $n$ vertices is at most $O(n\log n)$.
\end{theorem}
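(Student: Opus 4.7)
The plan is to trace the boundary $\partial f$ as a cyclic sequence of arcs, where each arc is a maximal open sub-segment of a single graph edge appearing on $\partial f$, so that the quantity to be bounded is just the number of such arcs, equivalently the number of their endpoints (``corners''). I would first classify the corners into \emph{vertex-corners}, located at one of the $n$ graph vertices where $\partial f$ switches from one incident edge to another, and \emph{cross-corners}, located at a transversal intersection point of two edges. Since each arc contributes exactly two corners, it suffices to bound the total number of corners of both types by $O(n\log n)$.

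For cross-corners, the plan is to invoke the classical single-face bound for arrangements of line segments (Guibas--Sharir and coauthors), but restricted to the sub-collection of edges that actually contribute an arc to $\partial f$. The shared-endpoint hypothesis, combined with a simple convexity/visibility argument around each vertex, lets one bound the number of \emph{distinct} graph edges that can appear on $\partial f$ by $O(n)$, so this piece contributes only $O(\lambda_3(n)) = O(n\alpha(n))$, which is absorbed into the $O(n\log n)$ target.

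The delicate step is the vertex-corners. At each graph vertex $v$, the edges incident to $v$ split a small disk about $v$ into sectors, and the number of vertex-corners located at $v$ equals the number of those sectors occupied by $f$. I would form a sequence $\sigma$ that records, in cyclic order along $\partial f$, the vertex hosting each vertex-corner; consecutive letters differ by construction, and using that two line segments meet at most once and that shared endpoints force a linear order on the edges emanating from each $v$, one can show that $\sigma$ avoids a long alternating pattern on any two letters. Thus $\sigma$ is essentially a Davenport--Schinzel sequence of small order over an $n$-letter alphabet, which reduces the task to a combinatorial length bound like $\lambda_s(n)$ for small $s$.

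The step I expect to be the main obstacle is precisely this reduction to an $n$-letter alphabet: a priori the relevant combinatorics lives over the edge set, whose size can be $\binom{n}{2}$, and folding many edges together using only their $n$ shared vertex endpoints is what must cost at most a logarithmic factor in the final bound. Making this folding rigorous while preserving the forbidden-pattern structure required for the Davenport--Schinzel estimate is where the heart of the Arkin--Halperin--Kedem--Mitchell--Naor argument lies, and is what allows the answer to depend on the number of vertices $n$ rather than on the much larger number of edges.
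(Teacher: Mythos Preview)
The paper does not prove this statement at all: Theorem~\ref{Arkin} is quoted as an external result of Arkin, Halperin, Kedem, Mitchell, and Naor, and is used as a black box in the proof of Theorem~\ref{enumeration}. There is therefore nothing in the paper to compare your proposal against.

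As a side remark on your sketch itself: the overall strategy (reduce the boundary complexity to a Davenport--Schinzel-type sequence problem and exploit that all segments come from only $n$ distinct endpoints) is indeed the standard line of attack for this result. Your own diagnosis of the crux is accurate: the whole content of the theorem is precisely that the alphabet can be taken to be the $n$ vertices rather than the up to $\binom{n}{2}$ edges, and your write-up does not actually carry out that step---it only names it. So what you have is a correct outline of where the difficulty lies, not a proof. If you want to complete it, you would need to supply the argument that bounds the number of alternations of two vertex labels along $\partial f$, which is the heart of the Arkin et al.\ paper; your current text defers exactly that point.
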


\noindent Note that this bound does not depend of the number of edges of the graph. We are now ready to prove Theorem~\ref{enumeration}.

\medskip

\begin{proof}
%\noindent{\em of Theorem~\ref{enumeration}.} 
For any graph $G$ with $n$ vertices that admits an $h$-obstacle representation, fix such a representation. Consider the visibility graph $G$ of the vertices in this representation. As explained at the end of the Introduction, any obstacle belongs to a single face in this drawing. In view of Theorem~\ref{Arkin}, the complexity of every face is $O(n\log n)$. Replacing each obstacle by a slightly shrunken copy of the face containing it, we can achieve that every obstacle {\em is} a polygonal region with $O(n\log n)$ sides. 

Notice that the order type of the sequence $S$ starting with the vertices of $G$, followed by the vertices of the obstacles (listed one by one, in cyclic order, and properly separated from one another) completely determines $G$. That is, we have a sequence of length $N$ with $N\le n + c_1 h n\log n$. According to 
% Deniz changed this from {GP86}:
Theorem~\ref{GoodmanPollack} (and the following comment), the number of different order types with this many points is at most
$$2^{O(N\log N)}<2^{chn\log^2 n},$$
for a suitable constant $c>0$. This is a very generous upper bound: most of the above sequences do not correspond to any visibility  graph $G$.
\end{proof}
%\qed

If in the above proof the average number of sides an obstacle can have is small, then we obtain

\begin{theorem}\label{sidelemma}
The number of graphs admitting an obstacle representation with at most $h$ obstacles, having a total of at most $hs$ sides, is at most
$$ 2^{O(n\log n +  hs\log (hs))}.$$
\end{theorem}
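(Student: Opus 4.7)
The plan is to essentially reuse the strategy of Theorem~\ref{enumeration}, replacing the crude face-complexity bound $O(n\log n)$ (given by Theorem~\ref{Arkin}) with the hypothesized total-sides bound $hs$. The point is that in the encoding argument, what actually matters is the total number of obstacle vertices that need to be listed, not the number of sides per obstacle.

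First I would fix, for each graph $G$ on $n$ vertices admitting an obstacle representation with at most $h$ obstacles and at most $hs$ sides in total, one such representation. As in the proof of Theorem~\ref{enumeration}, I may assume the obstacles are polygonal regions with vertices in general position with respect to the vertices of $G$ (if an obstacle lies inside a face, shrink a copy of that obstacle slightly so its vertices lie in the interior of the face without changing visibility; this can only decrease the number of sides, so the bound $hs$ is preserved). Then I would form the sequence $S$ consisting of the $n$ labeled vertices of $G$ followed by the vertices of the obstacles, listed obstacle by obstacle in cyclic order and separated by markers. Since the obstacles have at most $hs$ sides in total, they have at most $hs$ vertices in total, so the length of $S$ is at most $N := n + hs$.

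The key claim is that the labeled order type of $S$ determines $G$: given the order type, we can reconstruct each obstacle as the polygon on the appropriate consecutive subsequence of points, and then for every pair of labeled vertices decide whether the segment joining them meets any obstacle, purely from orientations of triples. Hence the number of graphs to be counted is at most the number of labeled order types on $N$ points, which by Theorem~\ref{GoodmanPollack} (and the extra $n! = 2^{O(n\log n)}$ factor for labelings) is at most $2^{O(N\log N)}$.

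Finally I would simplify $N\log N = (n+hs)\log(n+hs)$ by splitting into the two cases $hs \le n$ and $hs > n$: in the first, $\log(n+hs) = O(\log n)$, so $N\log N = O(n\log n)$; in the second, $\log(n+hs) = O(\log(hs))$, so $N\log N = O(hs\log(hs))$. In either case the bound is $O(n\log n + hs\log(hs))$, giving the claimed $2^{O(n\log n + hs\log(hs))}$. The main (and really only) subtlety will be the reduction step ensuring the obstacles can be taken with at most $hs$ polygonal vertices total without losing any non-edge, but this is immediate from the face-shrinking trick already used in Theorem~\ref{enumeration}.
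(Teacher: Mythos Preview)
Your proposal is correct and matches the paper's approach exactly: the paper simply remarks that Theorem~\ref{sidelemma} follows from the proof of Theorem~\ref{enumeration} by substituting the hypothesized total-sides bound $hs$ for the $O(hn\log n)$ bound coming from Theorem~\ref{Arkin}, yielding $N \le n + hs$ and hence $2^{O(N\log N)} = 2^{O(n\log n + hs\log(hs))}$. One minor remark: the face-shrinking step you flag as the ``main subtlety'' is actually unnecessary here---in Theorem~\ref{enumeration} one replaces each obstacle by a shrunken copy of its \emph{face} (not of the obstacle) precisely in order to \emph{impose} a complexity bound, but here that bound is given by hypothesis, so you can list the obstacle vertices directly.
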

 
In particular, for segment obstacles ($s=2$), Theorem~\ref{sidelemma} immediately implies Theorem~\ref{segment}. Indeed, as long as the bound in Theorem~\ref{sidelemma} is smaller than $2^{n\choose 2}$, the total number of graphs on $n$ labeled vertices, we can argue that there is a graph with segment obstacle number larger than $h$.

We now show how to prove Theorem \ref{convex} with an easier way to encode the drawing of a graph and its convex obstacles.

%\medskip

\begin{proof}
%\noindent{\em of Theorem~\ref{convex}.}
As before, it is enough to bound the number of graphs that admit an obstacle representation with at most $h$ convex obstacles. Let us fix such a graph $G$, together with a representation. Let $V$ be the set of points representing the vertices, and let $O_1,\ldots, O_h$ be the convex obstacles. For any obstacle $O_i$, rotate an oriented tangent line $\ell$ along its boundary in the clockwise direction. We can assume without loss of generality that $\ell$ never passes through two points of $V$. Let us record the sequence of points met by $\ell$. If $v\in V$ is met at the right side of $\ell$, we add the symbol $v_+$ to the sequence, otherwise we add $v_-$ (Figure 2.2).
%can't make this reference work!
%\ref{fig:convexOb}
When $\ell$ returns to its initial position, we stop. The resulting sequence consists of $2n$ characters. From this sequence, it is easy to reconstruct which pairs of vertices are visible in the presence of the single obstacle $O_i$. Hence, knowing these sequences for every obstacle $O_i$, completely determines the visibility graph $G$. The number of distinct sequences assigned to a single obstacle is at most $(2n)!$, so that the number of graphs with convex obstacle number at most $h$ cannot exceed $((2n)!)^h/h!<(2n)^{2hn}$. As long as this number is smaller than $2^{n\choose 2}$, there is a graph with convex obstacle number larger than $h$. %\qed
\end{proof} 
% Figure by Padmini
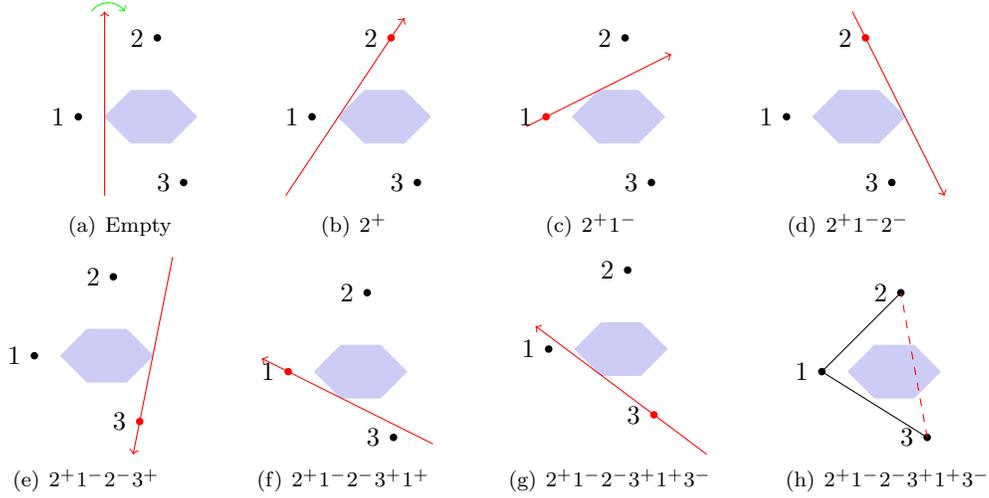
\begin{figure*}[htp]
{\centering
\subfigure[Empty]{
\begin{tikzpicture}[scale=0.35]
\filldraw [blue!80!black!20!white] (0,0) -- (1,1) -- (2.5,1) -- (3.5,0) -- (2.5,-1) -- (1,-1) -- (0,0) -- cycle;
\node [fill=black,circle,inner sep=1pt,label=left:$1$] (1) at (-1,0) {}; 
\node [fill=black,circle,inner sep=1pt,label=left:$2$] (2) at (2,3) {}; 
\node [fill=black,circle,inner sep=1pt,label=left:$3$] (3) at (3,-2.5) {}; 
\draw [<-,red] (0,4) -- (0,-3);
\draw [->,green] (-0.5,4) arc (145:35:22pt);
\end{tikzpicture}
}
\qquad
\subfigure[$2^+$]{
\begin{tikzpicture}[scale=0.35]
\filldraw [blue!80!black!20!white] (0,0) -- (1,1) -- (2.5,1) -- (3.5,0) -- (2.5,-1) -- (1,-1) -- (0,0) -- cycle;
\node [fill=black,circle,inner sep=1pt,label=left:$1$] (1) at (-1,0) {}; 
\node [fill=red,circle,inner sep=1pt,label=left:$2$] (2) at (2,3) {}; 
\node [fill=black,circle,inner sep=1pt,label=left:$3$] (3) at (3,-2.5) {}; 
\draw [->,red] (-2,-3) -- (2.5,3.75);
\end{tikzpicture}
}
\qquad
\subfigure[$2^+1^-$]{
\begin{tikzpicture}[scale=0.35]
\filldraw [blue!80!black!20!white] (0,0) -- (1,1) -- (2.5,1) -- (3.5,0) -- (2.5,-1) -- (1,-1) -- (0,0) -- cycle;
\draw [->,red] (-1.75,-0.375) -- (3.75,2.375);
\node [fill=red,circle,inner sep=1pt,label=left:$1$] (1) at (-1,0) {}; 
\node [fill=black,circle,inner sep=1pt,label=left:$2$] (2) at (2,3) {}; 
\node [fill=black,circle,inner sep=1pt,label=left:$3$] (3) at (3,-2.5) {}; 
\end{tikzpicture}
}
\qquad
\subfigure[$2^+1^-2^-$]{
\begin{tikzpicture}[scale=0.35]
\filldraw [blue!80!black!20!white] (0,0) -- (1,1) -- (2.5,1) -- (3.5,0) -- (2.5,-1) -- (1,-1) -- (0,0) -- cycle;
\node [fill=black,circle,inner sep=1pt,label=left:$1$] (1) at (-1,0) {}; 
\node [fill=red,circle,inner sep=1pt,label=left:$2$] (2) at (2,3) {}; 
\node [fill=black,circle,inner sep=1pt,label=left:$3$] (3) at (3,-2.5) {}; 
\draw [->,red] (1.5,4) -- (5,-3);
\end{tikzpicture}
}
\qquad
\subfigure[$2^+1^-2^-3^+$]{
\begin{tikzpicture}[scale=0.35]
\filldraw [blue!80!black!20!white] (0,0) -- (1,1) -- (2.5,1) -- (3.5,0) -- (2.5,-1) -- (1,-1) -- (0,0) -- cycle;
\node [fill=black,circle,inner sep=1pt,label=left:$1$] (1) at (-1,0) {}; 
\node [fill=black,circle,inner sep=1pt,label=left:$2$] (2) at (2,3) {}; 
\node [fill=red,circle,inner sep=1pt,label=left:$3$] (3) at (3,-2.5) {}; 
\draw [<-,red] (2.75,-3.75) -- (4.25,3.75);
\end{tikzpicture}
}
\qquad
\subfigure[$2^+1^-2^-3^+1^+$]{
\begin{tikzpicture}[scale=0.35]
\filldraw [blue!80!black!20!white] (0,0) -- (1,1) -- (2.5,1) -- (3.5,0) -- (2.5,-1) -- (1,-1) -- (0,0) -- cycle;
\node [fill=red,circle,inner sep=1pt,label=left:$1$] (1) at (-1,0) {}; 
\node [fill=black,circle,inner sep=1pt,label=left:$2$] (2) at (2,3) {}; 
\node [fill=black,circle,inner sep=1pt,label=left:$3$] (3) at (3,-2.5) {}; 
\draw [<-,red] (-2,0.5) -- (4.5,-2.75);
\end{tikzpicture}
}
\qquad
\subfigure[$2^+1^-2^-3^+1^+3^-$]{
\begin{tikzpicture}[scale=0.35]
\filldraw [blue!80!black!20!white] (0,0) -- (1,1) -- (2.5,1) -- (3.5,0) -- (2.5,-1) -- (1,-1) -- (0,0) -- cycle;
\node [fill=black,circle,inner sep=1pt,label=left:$1$] (1) at (-1,0) {}; 
\node [fill=black,circle,inner sep=1pt,label=left:$2$] (2) at (2,3) {}; 
\node [fill=red,circle,inner sep=1pt,label=left:$3$] (3) at (3,-2.5) {}; 
\draw [<-,red] (-1.5,0.875) -- (5,-4);
\end{tikzpicture}
}
\qquad
\subfigure[$2^+1^-2^-3^+1^+3^-$]{
\begin{tikzpicture}[scale=0.35]
\filldraw [blue!80!black!20!white] (0,0) -- (1,1) -- (2.5,1) -- (3.5,0) -- (2.5,-1) -- (1,-1) -- (0,0) -- cycle;
\node [fill=black,circle,inner sep=1pt,label=left:$1$] (1) at (-1,0) {}; 
\node [fill=black,circle,inner sep=1pt,label=left:$2$] (2) at (2,3) {}; 
\node [fill=black,circle,inner sep=1pt,label=left:$3$] (3) at (3,-2.5) {}; 
\draw (3,-2.5) -- (-1,0) -- (2,3);
\draw [dashed,red](3,-2.5) -- (2,3);
\node (4) at (5,0) {};
\end{tikzpicture}
}
\caption[Constructing a sequence from convex obstacles]{Parts (a) to (g) show the construction of the sequence and (h) shows the visibilities. The arrow on the tangent line indicates the direction from the point of tangency in which we assign $+$ as a label to the vertex. The additional arrow in (a) indicates that the tangent line is rotated clockwise around the obstacle.}
} 
\label{fig:convexOb}
\end{figure*}

\section{Proof of Theorem \ref{concave}}\label{sec:concave}
Here we prove Theorem \ref{concave} that claims that for every
$n$, there exists a graph $G$ on $n$ vertices with obstacle number
${\rm obs}(G)\geq \Omega\left({n}/{\log n}\right).$

\begin{proof}%[of Theorem \ref{concave}]
The proof will be a counting argument. From Theorem
\ref{enumeration} we know that the number of graphs on $k$
(labeled) vertices with obstacle number at most one is at most
$2^{o(k^2)}$. Now we will count the graphs with obstacle number
less than $n/2k$. Suppose $G$ has a representation with less than
$n/2k$ obstacles. Fix one such representation. There are $n!$
possibilities for the order of the vertices of $G$ from left to
right (we can suppose that no two are below each other). We divide
the vertices into $n/k$ groups of size $k$, from left to right.
Denote the respective induced graphs by $G_i$.

\begin{claim} At least half of the $G_i$'s require at most one obstacle.
\end{claim}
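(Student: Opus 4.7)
The plan is to bound, for each strip $S_i$, the obstacle number of $G_i$ by the number of obstacles in $G$'s fixed representation that block at least one non-edge inside $V(G_i)$, and then apply an averaging argument. First, for the fixed representation of $G$ with $h<n/(2k)$ obstacles, I would let $A_i$ denote the number of obstacles which block some pair $\{u,v\}\subseteq V(G_i)$. Because $G_i$ is an induced subgraph of $G$, keeping only these $A_i$ obstacles on the original vertex positions of $V(G_i)$ is already a valid obstacle representation of $G_i$: every non-edge of $G_i$ is a non-edge of $G$ and was blocked by some obstacle of $G$, which must therefore belong to our subfamily; conversely, no edge of $G_i$ is blocked by any obstacle of $G$, in particular not by any of these $A_i$. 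Hence $\mathrm{obs}(G_i)\le A_i$, and it suffices to show that at least $n/(2k)$ of the indices $i$ satisfy $A_i\le 1$.

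Because any two vertices of $V(G_i)$ are joined by a segment contained in the strip $S_i$, an obstacle contributes to $A_i$ only when it meets $S_i$; by connectedness of each obstacle, the set of strip indices to which a given obstacle contributes is a consecutive range. The natural route to the claim is then to establish the inequality $\sum_i A_i\le 2h$, whereupon Markov's inequality yields $|\{i:A_i\ge 2\}|\le \tfrac{1}{2}\sum_i A_i\le h<n/(2k)$, so at least $n/(2k)$ strips satisfy $A_i\le 1$ and the claim follows.

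The main obstacle is precisely this last inequality, since a single (possibly non-convex) obstacle could in principle block pairs inside many consecutive strips at once. My plan to handle this is to modify the representation by replacing each obstacle by the connected components of its intersections with the individual strips $S_i$: any point at which an obstacle blocks a segment lies in exactly one strip, and the component of the intersection containing that point still blocks the segment, so the modified family remains a valid representation of the same graph $G$ in which every obstacle is confined to a single strip. The remaining delicate step, which I expect to be the hardest part of the argument, is to bound the number of components that actually block a non-edge of some $G_i$ by $O(h)$; here one would invoke the $O(n\log n)$ bound on the complexity of the face of $G$'s drawing containing each original obstacle (Theorem \ref{Arkin}), and use the fact that components blocking no non-edge of any $G_i$ may be discarded without affecting any $A_i$. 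A constant factor worse than $2h$ in this step is still acceptable, since one can absorb it by mildly shrinking $k$ in the outer counting argument without changing the $\Omega(n/\log n)$ conclusion.
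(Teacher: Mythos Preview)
Your setup through ${\rm obs}(G_i)\le A_i$ and the observation that each obstacle contributes to a consecutive range of strip indices are both correct. The gap is the step $\sum_i A_i = O(h)$: nothing prevents a single connected obstacle from snaking across all $n/k$ strips and blocking a non-edge of $G_i$ inside each one, so $\sum_i A_i$ can be as large as $h\cdot(n/k)$. Your proposed repair---slicing each obstacle by the strip boundaries and discarding irrelevant pieces---does not recover an $O(h)$ bound. By Theorem~\ref{Arkin} the boundary of one obstacle has $O(n\log n)$ segments, and each such segment may cross each of the $n/k-1$ vertical boundary lines once, so a single obstacle can split into polynomially many pieces; there is no mechanism in your outline forcing all but $O(1)$ of them to block no non-edge of the ambient $G_i$. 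The error is thus a polynomial factor in $n$, not a constant, and cannot be absorbed by adjusting $k$.

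The paper avoids averaging entirely with a short topological observation. Each obstacle is connected and disjoint from every edge of $G$, hence lies in a single face of the drawing of $G_i$. If every obstacle relevant to $G_i$ lay in the \emph{exterior} face of $G_i$'s drawing, one could connect them through that (connected) face into a single obstacle, yielding ${\rm obs}(G_i)\le 1$. Therefore ${\rm obs}(G_i)\ge 2$ forces at least one obstacle into an \emph{interior} face of $G_i$'s drawing. Since all vertices and edges of $G_i$ lie in the strip $S_i$, every interior face of that drawing is contained in $S_i$, so this obstacle is trapped inside $S_i$ and cannot serve the same role for any other $G_j$. Hence the number of indices with ${\rm obs}(G_i)\ge 2$ is at most $h < n/(2k)$, which is exactly the claim.
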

\begin{proof} By contradiction, suppose that at least half of the $G_i$'s require at least two obstacles. One of each of these obstacles must be in an interior face of the respective $G_i$'s. Thus these obstacles are pair-wise separated by edges and must be different. This together would be at least $n/2k$ obstacles which contradicts the choice of $G$.
\end{proof}

For the subset of $G_i$'s that require at most one obstacle there
are less than $2^{n/k}$ possibilities. Since the number of graphs
on $k$ vertices whose obstacle number is at most one is $2^{o(k^2)}$, the
probability that a $G_i$ has a representation with at most one obstacle is
$2^{o(k^2)-{k\choose 2}}$. Therefore, the probability that a
random graph $G$ has obstacle number at most $n/2k$ is at most
{%\Large
$$n!\cdot 2^{n/k}\cdot (2^{o(k^2)-{k\choose 2}})^{n/2k}= 2^{n\log n - \frac{kn}{4}+o(kn)}.$$}

If $k=\Omega\left(5\log n\right)$, this number tends to zero.
Therefore some graphs need at least $\Omega\left({n}/{\log
n}\right)$ obstacles.
\end{proof}

\section{Further properties}

In this section, we describe further properties of obstacle numbers. We start
with another question from \cite{AKL09}.

\begin{theorem}\label{exactly}
For every $h$, there exists a graph with obstacle number exactly
$h$.
\end{theorem}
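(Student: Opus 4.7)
The plan is an intermediate-value-style argument. Two base facts are immediate: (i) ${\rm obs}(K_n) = 0$ for every $n$, since $n$ points in general position realize $K_n$ as their visibility graph with no obstacles; (ii) by Theorem~\ref{concave}, for every $h$ there exist $n$ and a graph $G^\ast$ on $n$ vertices with ${\rm obs}(G^\ast) \geq h$. The idea is to interpolate from $K_n$ down to $G^\ast$ by deleting non-edges one at a time and to show that along this interpolation the obstacle number can never jump up by more than one, so every integer in $\{0,1,\ldots,h\}$ must be hit.

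The key lemma is a one-sided Lipschitz estimate:
\[
{\rm obs}(G - e) \;\leq\; {\rm obs}(G) + 1 \qquad \text{for every } e = uv \in E(G).
\]
To prove it, take any optimal obstacle representation of $G$ and augment it by a single extra polygonal obstacle $O'$ placed at a generic point on the open segment $uv$, chosen with diameter small enough that $O'$ is disjoint from every other segment between two vertices and from all existing obstacles, and so that general position is preserved with respect to all vertices and obstacle corners. The augmented configuration uses ${\rm obs}(G)+1$ obstacles and realizes exactly $G-e$, since $uv$ is the only newly blocked segment.

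Now fix $h$, pick $G^\ast$ on $n$ vertices with ${\rm obs}(G^\ast) \geq h$, enumerate the non-edges of $G^\ast$ as $e_1,\ldots,e_m$, and define $G_0 = K_n$, $G_{i+1} = G_i - e_{i+1}$, so that $G_m = G^\ast$. Writing $a_i = {\rm obs}(G_i)$, we have $a_0 = 0$, $a_m \geq h$, and the Lipschitz estimate gives $a_{i+1} \leq a_i + 1$ for each $i$. Let $i^\ast$ be the smallest index with $a_{i^\ast} \geq h$; then $i^\ast \geq 1$ and, by minimality, $a_{i^\ast - 1} \leq h-1$. Combining these yields $h \leq a_{i^\ast} \leq a_{i^\ast - 1} + 1 \leq h$, forcing $a_{i^\ast} = h$, so $G_{i^\ast}$ is a graph with obstacle number exactly $h$.

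The main obstacle in carrying out this plan is the geometric verification of the Lipschitz lemma — specifically, confirming that a sufficiently small polygonal obstacle can be inserted on any prescribed segment without grazing any other segment or violating the general-position hypothesis on the union of vertex and obstacle-corner sets. Since only finitely many other segments and corners are involved and the new obstacle together with its vertices may be shrunk and perturbed freely, this reduces to a routine perturbation argument, so the overall strategy should go through cleanly.
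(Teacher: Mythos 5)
Your proposal is correct and follows essentially the same route as the paper: interpolate from $K_n$ (obstacle number zero) down to a graph of large obstacle number by deleting edges, using the observation that deleting an edge raises the obstacle number by at most one (block the deleted edge with a tiny new obstacle), and then apply the discrete intermediate-value argument. The only cosmetic difference is that you cite Theorem~\ref{concave} for the existence of graphs with obstacle number at least $h$, while the paper invokes its counting result (Theorem~\ref{ult}); either ingredient works equally well.
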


%\begin{proposition}
%For every $h$, there exists a graph with obstacle number exactly $h$.
%\end{proposition}

\begin{proof}
Pick a graph $G$ with obstacle number $h'>h$. (The existence of such a graph follows, e.g., from Corollary~\ref{ult}.) Let $n$ denote the number of vertices of $G$. Consider a complete graph $K_n$ on $V(G)$. Its obstacle number is {\em zero}, and $G$ can be obtained from $K_n$ by successively deleting edges. Observe that as we delete an edge from a graph $G'$, its obstacle number cannot increase by more than {\em one}. This follows from the fact that by blocking the deleted edge with an additional small obstacle that does not intersect any other edge of $G'$, we obtain a valid obstacle representation of the new graph. (Of course, the obstacle number of a graph can also {\em decrease} by the removal of an edge.) Since at the beginning of the process, $K_n$ has obstacle number {\em zero}, at the end $G$ has obstacle number $h'>h$, and whenever it increases, the increase is {\em one}, we can conclude that at some stage we obtain a graph with obstacle number precisely $h$.
\end{proof}

The same argument applies to the convex obstacle number, to the segment obstacle number, and many similar parameters.

%\medskip

Let $H$ be a fixed graph. According to a classical conjecture of Erd\H os and Hajnal~\cite{EH89}, any graph with $n$ vertices that does not have an induced subgraph isomorphic to $H$ contains an independent set or a complete subgraph of size at least $n^{\varepsilon(H)}$, for some positive constant $\varepsilon(H)$. It follows that for any hereditary graph property there exists a constant $\varepsilon>0$ such that every graph $G$ on $n$ vertices with this property satisfies ${\rm hom}(G)\ge n^{\varepsilon}$.

Here we show that the last statement holds for the property that the  graph has bounded obstacle number.

\begin{theorem} For any fixed integer $h>0$, every graph on $n$ vertices with ${\rm obs}_{c}(G) \le h$ satisfies ${\rm hom}(G)\ge \frac{1}{2}n^{\frac{1}{h+1}}$.
\end{theorem}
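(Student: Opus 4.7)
The plan is to proceed by induction on $h$. The base case $h=0$ is immediate: an empty family of obstacles forces $G=K_n$, and hence ${\rm hom}(G)=n\ge \tfrac{1}{2}n^{1/1}$. For the inductive step, fix a convex-obstacle representation of $G$ with obstacles $O_1,\ldots,O_h$ and consider the auxiliary graph $G_h$ on $V(G)$ whose edges are exactly the pairs $uv$ for which the segment $uv$ avoids $O_h$ (crossings with the other $h-1$ obstacles are allowed). Since $E(G)\subseteq E(G_h)$, every independent set in $G_h$ is automatically independent in $G$.

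The key geometric input I would establish is that $G_h$ is a \emph{circular-arc graph}. Parametrize the supporting lines of $O_h$ by the outward unit normal $\theta\in S^1$; for each vertex $v\notin O_h$ let $A_v\subseteq S^1$ consist of those $\theta$ for which $v$ lies strictly on the outer side of the corresponding supporting line. Convexity of $O_h$ makes each $A_v$ a (connected) arc, and by the convex separation theorem the segment $uv$ avoids $O_h$ if and only if $u$ and $v$ share a common supporting line with $O_h$ on its inner side, i.e.\ iff $A_u\cap A_v\ne\emptyset$. Thus $G_h$ is precisely the intersection graph of the arcs $\{A_v\}_{v\in V(G)}$. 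I would then invoke the classical theorem of Tucker (1975) that every circular-arc graph $H$ satisfies $\chi(H)\le 2\omega(H)$, which can be rephrased as the dichotomy: for any threshold $k$, either $\omega(H)\ge k$ or $\alpha(H)\ge n/(2k)$.

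Setting $k=\lceil n^{h/(h+1)}\rceil$ and applying this dichotomy to $G_h$, two cases arise. If $\omega(G_h)\ge k$, pick a clique $C$ of this size in $G_h$; since $O_h$ blocks no pair in $C$, the induced subgraph $G[C]$ inherits a convex-obstacle representation using only the remaining $h-1$ obstacles $O_1,\ldots,O_{h-1}$, so the induction hypothesis yields
\[
{\rm hom}(G)\ \ge\ {\rm hom}(G[C])\ \ge\ \tfrac{1}{2}|C|^{1/h}\ \ge\ \tfrac{1}{2}n^{1/(h+1)},
\]
using the identity $\bigl(n^{h/(h+1)}\bigr)^{1/h}=n^{1/(h+1)}$. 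Otherwise $\alpha(G_h)\ge n/(2k)\ge\tfrac{1}{2}n^{1/(h+1)}$, and any independent set in $G_h$ of this size is also independent in $G$, again giving the required bound.

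The main obstacle I anticipate is making the arc description of $G_h$ fully rigorous: checking that each $A_v$ is a genuine arc rather than a more general subset of $S^1$, and handling boundary cases in which a vertex happens to lie on a supporting line of $O_h$ or on the boundary of $O_h$. The general-position hypothesis built into the notion of an obstacle representation makes this a technicality rather than a real difficulty. Once the arc model is set up, the argument is purely combinatorial: the dichotomy above and the inductive exponent arithmetic are routine, and the only nontrivial external input is Tucker's $\chi\le 2\omega$ bound for circular-arc graphs.
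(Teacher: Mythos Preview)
Your proof is correct and follows essentially the same route as the paper's: induct on $h$, peel off a single convex obstacle to obtain a circular-arc graph, apply Tucker's $\chi\le 2\omega$ dichotomy, and recurse on a large clique with one fewer obstacle. The only cosmetic differences are that the paper starts the induction at $h=1$ (quoting Alpert--Koch--Laison for the circular-arc characterization) whereas you start at $h=0$ and supply the arc model yourself; also, you should drop the ceiling in $k$ and argue directly with the real threshold $n^{h/(h+1)}$ to avoid a harmless off-by-one in the second branch.
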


\begin{proof}
We proceed by induction on $h$. For $h=1$, Alpert et al.~\cite{AKL09} showed that all graphs with convex obstacle number {\em one} are so-called "circular interval graphs" (intersection graphs of a collection of arcs along the circle). It is known that all such graphs $G$ whose maximum complete subgraph is of size $x$ has an independent set of size at least $\frac{n}{2x}$; see \cite{T75}. Setting $x=\sqrt{n/2}$, it follows that ${\rm hom}(G)\geq \frac{1}{2}\sqrt{n}$.

Let $h>1$, and assume that the statement has already been verified for all graphs with convex obstacle number smaller than $h$. Let $G$ be a graph that requires $h$ convex obstacles, and consider one of its representations. Then we have $G = \cap_i G_i$, where $G_i$ denotes the visibility graph of the same set of points after the removal of all but the $i$-th obstacle. 

If the size of the largest independent set in $G_1$ is at least $\frac{1}{2}n^{\frac{1}{h+1}}$, then the statement holds, because this set is also an independent set in $G$. If this is not the case, then, by the above property of circular arc graphs, $G$ must have a complete subgraph $K$ of size at least $n^{\frac{h}{h+1}}$. Consider now the subgraph of $\cap_{i=2}^h G_i$ induced by the vertices of $K$. This graph requires only $h-1$ obstacles. Thus, we can apply the induction hypothesis to obtain that it has a complete subgraph or an independent set of size at least $\frac{1}{2}(n^{\frac{h}{h+1}})^{\frac{1}{h}} = \frac{1}{2}n^{\frac{1}{h+1}}$.
\end{proof}

It is easy to see that every graph $G$ on $n$ vertices with convex obstacle number at most $h$ has the following stronger property, which implies that they satisfy the Erd\H os-Hajnal conjecture: There exists a constant $\varepsilon=\varepsilon(h)$ such that $G$ contains a complete subgraph of size at least $\varepsilon n$ or two sets of size at least $\varepsilon n$ such that no edges between them belongs to $G$ (cf.~\cite{FP08}).

%\medskip

Finally, we make a comment on higher dimensional representations.

\begin{proposition}
In dimensions $d=4$ and higher, every graph can be represented with one convex obstacle.
\end{proposition}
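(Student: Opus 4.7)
The plan is to place the vertices on the moment curve in $\mathbb{R}^4 \subseteq \mathbb{R}^d$, take (a tiny polytopal thickening of) the convex hull of the midpoints of all non-edges as the single convex obstacle, and then use the classical total-positivity property of the moment curve to show that no edge segment is ever blocked.

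Concretely, fix distinct reals $t_1 < t_2 < \cdots < t_n$ and set $v_i := (t_i, t_i^2, t_i^3, t_i^4, 0, \ldots, 0) \in \mathbb{R}^d$. Form $K_0 := \mathrm{conv}\{m_{ij} : \{i,j\} \text{ is a non-edge of } G\}$, where $m_{ij} := \tfrac{1}{2}(v_i + v_j)$, and let $K$ be a tiny polytopal thickening of $K_0$. Every non-edge segment contains its midpoint and therefore meets $K_0 \subseteq K$, so the only thing left to check is that no edge segment meets $K_0$; a sufficiently small thickening will then remain disjoint from every edge segment as well.

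Suppose, for contradiction, that for some edge $\{a,b\}$ of $G$ a point of $\overline{v_a v_b}$ is expressible as $\sum_{\{i,j\}\text{ non-edge}} \lambda_{ij} m_{ij}$ with $\lambda_{ij} \ge 0$ and $\sum \lambda_{ij} = 1$. Comparing coefficients of $v_m$ on both sides produces an affine relation $\sum_m c_m v_m = 0$ with $\sum_m c_m = 0$ in which $c_m \ge 0$ for every $m \notin \{a,b\}$. This relation is necessarily nontrivial: if every $c_m$ with $m \notin \{a,b\}$ vanished, then every $\lambda_{ij}$ with at least one index outside $\{a,b\}$ would vanish, leaving only a possible $\lambda_{ab}$---which is absent because $\{a,b\}$ is an edge, not a non-edge.

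The main step, which I expect to be the real technical core, is the total-positivity fact for the moment curve: any nonzero affine dependence among $n$ points on $\gamma(t) = (t, t^2, t^3, t^4)$ must have at least $d+1 = 5$ sign changes in its coefficient sequence $(c_1, \ldots, c_n)$. The standard argument runs as follows: if there were only $s \le d$ sign changes, one could interpolate a polynomial $q$ of degree $\le s \le d$ with $q(t_m) c_m \ge 0$ for every $m$ and strictly positive on the support of $c$, giving $\sum_m c_m q(t_m) > 0$; but the affine dependence forces this sum to be zero. In our setting only $c_a$ and $c_b$ are allowed to be negative, which permits at most $2 \cdot 2 = 4$ sign changes---one short of the required five. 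This contradiction handles $d = 4$; for $d > 4$ the identical embedding (padded by zero coordinates) and the same obstacle continue to work without change.
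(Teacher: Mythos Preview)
Your proof is correct and uses exactly the same construction as the paper: place the vertices on the moment curve in $\mathbb{R}^4$ and take as the single obstacle the convex hull of the midpoints of the non-edges. The only difference is in the justification of the key step---the paper simply invokes the standard fact that the cyclic $4$-polytope is $2$-neighborly (so every segment $v_av_b$ is a face, hence separated from the remaining vertices and all the non-edge midpoints by a supporting hyperplane), whereas you re-derive this separation directly via the sign-variation/total-positivity argument; the two arguments are equivalent.
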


\begin{proof}
Let $G$ be a graph with $n$ vertices.
Consider the moment curve $$\{(t,t^2,t^3,t^4):t \in \mathbf{R}\}.$$ Pick $n$ points $v_i = (t_i, {t_i}^2,{t_i}^3,{t_i}^4)$ on this curve, $i=1,\ldots, n$. The convex hull of these points is a {\em cyclic polytope} $P_n$. The vertex set of $P_n$ is $\{v_1,\ldots, v_n\}$, and any segment connecting a pair of vertices of $P_n$ is an edge of $P_n$ (lying on its boundary). Denote the midpoint of the edge $v_iv_j$ by $v_{ij}$, and let $O$ be the convex hull of the set of all midpoint $v_{ij}$, for which $v_i$ and $v_j$ are not connected by an edge in $G$. Obviously, the points $v_i$ and the obstacle $O$ (or its small perturbation, if we wish to attain general position) show that $G$ admits a representation with a single convex obstacle.
\end{proof}

%\medskip

%\noindent{\bf D.} NP-complete problem?
%Smallest bipartite graph which does not admit a $1$-obstacle representation. Relation ship between convex and nonconvex obstacle numbers?

\section{Open Problems}

The problems we have considered in the last few sections were to ascertain
the obstacle number of graphs when we restrict the kind of obstacles we use, 
namely, general polygons, convex polygons and segments. 
Two other ways to consider the problem would be, firstly, to consider 
restrictions on the placement of the obstacles, and secondly, to consider
restrictions on the kind of graphs we consider. For the first question, 
an interesting problem raised in \cite{AKL09} was to determine graphs which 
require only one obstacle in their outer face.

For the second problem, we realize from Theorem~\ref{ult} that the problem
is more interesting if we consider sparse graphs. 
In \cite{AKL09},
it was shown that outerplanar graphs can be drawn with exactly one obstacle 
in the outer face that was not necessarily convex. Hence, they raised the
question whether outerplanar graphs can be drawn with a finite number of 
convex obstacles. 
 To this, in \cite{fss11} 
it was shown that outerplanar graphs can be drawn with 
only five convex obstacles. Since every tree is an outerplanar graph, this 
also settles the question for trees.
It is an interesting open problem if planar graphs can be drawn 
with a finite number of (convex) obstacles. 

Any graph with $e$ edges can be drawn with $2e$ segment obstacles, by placing
a segment very close to every vertex between any two adjacent edges in the 
drawing. Hence, a sublinear bound on obstacle (or convex obstacle) number 
of planar graphs would also be interesting.

In three dimensions, it is easy to see that every graph can be represented 
with one obstacle. It is interesting, however, to find a bound when we 
restrict ourselves to convex obstacles.

Finally, the upper bound of obstacle numbers is wide open and nothing better 
is known than $2n^2/3$ (this can be achieved since a graph with $e$ edges
needs at most $n(n-1)/2 -e$ obstacles, or $2e$ obstacles from the above
observation). Hence, even a subquadratic bound would be interesting.

\newpage
\part{Combinatorial Games}
\newpage
\chapter{Tic-Tac-Toe}

\section{Introduction}
A central topic of combinatorial game theory is the study of positional games. The interested reader can find the state of the art methods in Beck's Tic-Tac-Toe book \cite{B}. In general, positional games are played between two players on a {\em board}, the points of which they alternatingly occupy with their marks and whoever first fills a {\em winning set} completely with her/his marks wins the game.
Thus a positional game can be played on any hypergraph, but in this chapter, we only consider %countable boards where
{\em semi-infinite} games
where all winning sets are finite.
If after countably many steps none of them occupied a winning set, we say that the game ended in a draw.
It is easy to see that we can suppose that the next move of the players depends only on the actual position of the board and is deterministic.\footnote{This is not the case for infinite games and even in semi-infinite games it can happen that the first player can always win the game but there is no $N$ such that the game could be won in $N$ moves. For interesting examples, we refer the reader to the antique papers \cite{ACN, BC, CN}.}
We say that a player has a {\em  winning strategy} if no matter how the other player plays, she/he always wins.
We also say that a player has a {\em drawing strategy} if no matter how the other player plays, she/he can always achieve a draw (or win).
A folklore strategy stealing argument shows that the second player (who puts {\em his} first mark after the first player puts {\em her} first mark, as ladies go first) cannot have a winning strategy, so the best that he can hope for is a draw.
Given any semi-infinite game, either the first player has a winning strategy, or the second player has a drawing strategy. %, but it might be undecidable to determine which.cite???
We say that the second player can achieve a {\em pairing strategy draw} if there is a matching among the points of the board such that every winning set contains at least one pair. It is easy to see that the second player can now force a draw by putting his mark always on the point which is matched to the point occupied by the first player in the previous step (or anywhere, if the point in unmatched).
Note that in a relaxation of the game for the first player, by allowing her to win if she occupies a winning set (not necessarily first), the pairing strategy still lets the second player to force a draw. 
%Note that in this case the second player forfeits his option to force the first player to prevent her sometimes from completing a winning set, he can even prevent the first player from completing a winning set if she does not have to care about whether he completes one before her.
Such drawing strategies are called {\em strong draws}.
Since in these games only the first player is trying to complete a winning set and the second is only trying to prevent her from doing so, the first player is called {\em Maker}, the second {\em Breaker}, and the game is called a {\em Maker-Breaker} game.

This chapter is about a generalization of the Five-in-a-Row game\footnote{Aka Go-Muku and Am\H oba.} which is the more serious version of the classic Tic-Tac-Toe game. This generalized game is played on the $d$-dimensional integer grid, $\Z^d$, and the winning sets consist of $m$ consecutive gridpoints in $n$ previously given directions. For example, in the Five-in-a-Row game $d=2$, $m=5$ and $n=4$, the winning directions are the vertical, the horizontal and the two diagonals with slope $1$ and $-1$.
Note that we only assume that the greatest common divisor of the coordinates of each direction is $1$, so a direction can be arbitrarily long, e.g.\ $(5,0,24601)$.
The question is, for what values of $m$ can we guarantee that the second player has a drawing strategy?
It was shown by Hales and Jewett \cite{B}, that for the four above given directions of the two dimensional grid and $m=9$ the second player can achieve a pairing strategy draw. In the general version, a somewhat weaker result was shown by Kruczek and Sundberg \cite{KS1}, who showed that the second player has a pairing strategy if $m\ge 3n$ for any $d$. 
They conjectured that there is always a pairing strategy for $m\ge 2n+1$, generalizing the result of Hales and Jewett.\footnote{It is not hard to show that if $m=2n$, then such a strategy might not exist, we show why in Section 3.}

\begin{conjecture}[Kruczek and Sundberg] If $m=2n+1$, then in the Maker-Breaker game played on $\Z^d$, where Maker needs to put at least $m$ of his marks consecutively in one of $n$ given winning directions, Breaker can force a draw using a pairing strategy.
\end{conjecture}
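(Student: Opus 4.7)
The plan is to construct an explicit periodic pairing. Let $N = 2n$ and pass to the finite torus $T = (\mathbb{Z}/N)^d$. Since the entries of each direction vector $v_i$ have greatest common divisor $1$, translation by $v_i$ acts on $T$ with every orbit of length exactly $N$. A density calculation shows that any $N$-periodic pairing strategy blocking runs of length $2n+1$ in direction $v_i$ must select, for each $v_i$-orbit $O$ in $T$, exactly one pair $\{s,s+v_i\}$ with $s\in O$; conversely, such a selection yields a valid pairing whenever the chosen pairs together form a perfect matching of $T$. The entire question thus reduces to a finite combinatorial problem: given the $n$ direction cycles on $T$, can one pick one edge from each cycle so that the chosen edges form a perfect matching of $T$?

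First, I would encode this as a bipartite system-of-distinct-representatives problem: the left vertices are ``orbit-slots'' $(i,O)$ with $O$ a $v_i$-orbit, and the right vertices are the points of $T$; slot $(i,O)$ must claim two $v_i$-consecutive points of $O$ so that the claims are globally disjoint. I would verify the construction on guiding examples: the Hales--Jewett setting ($d=2$, $n=4$, the four standard directions, $m=9$), where an explicit pairing is already known, and small sporadic cases such as $d=2$, $n=3$ with directions $e_1,e_2,e_1+e_2$. From these I would try to extract a canonical selection rule -- perhaps a character-theoretic rule $s(O)=f(O)\bmod N$ for a well-chosen function $f$ on the orbit space, or a coordinate-recursive rule that reduces the $d$-dimensional torus to $(d-1)$-dimensional slices.

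For the general case, my strategy would be to apply an $\mathrm{SL}_d(\mathbb{Z})$ change of basis combined with a refinement of the torus: pass from $T$ to $(\mathbb{Z}/kN)^d$ for a suitable integer $k$, chosen to align the directions with a product structure that decouples them. The hope is that after such a reduction the problem becomes a matching within each ``factor'' of the torus, tractable by direct construction. Here Hall's theorem applied to the auxiliary bipartite graph above, or a character-sum estimate on $(\mathbb{Z}/kN)^d$ using its additive structure, would supply the needed perfect matching -- provided the density constraint ``two paired points per $N$ consecutive'' on every orbit continues to be met after enlarging.

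The hard part will be the exact tightness of the density bound. The paper's asymptotic argument exploits $m = p+1$ for a prime $p \ge 2n+1$, which provides a full extra coordinate of slack per orbit and renders the partition easy. In the tight case $m = 2n+1$ there is no such slack, and the partition condition becomes a rigid algebraic constraint relating all $n$ directions simultaneously. It is entirely possible that for some adversarial direction set no $N$-periodic pairing exists, and one would then have to invoke a compactness/diagonal argument on the tower of tori $(\mathbb{Z}/kN)^d$ as $k\to\infty$, or allow a genuinely aperiodic construction. I expect this is precisely the point at which the Kruczek--Sundberg conjecture is most delicate, and where a new combinatorial or algebraic ingredient is required beyond the prime-modulus machinery already exploited in the paper's $m = 2n + o(n)$ result.
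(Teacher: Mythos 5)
You have not proved the statement, and neither does the paper: this is stated there as an open conjecture, and the paper only proves the asymptotic version $m=2n+o(n)$ (Theorem on $p=m-1\ge 2n+1$ prime, via an injective projection to $\mathbb{Z}$ and the Preissmann--Mischler lemma), explicitly noting that its method gives at best $2n+2$. Your reduction is sound as far as it goes: since each $\vec v_i$ is primitive, translation by $\vec v_i$ on $(\mathbb{Z}/2n)^d$ has orbits of length exactly $2n$, and a tight counting argument (the same one the paper uses to rule out $m\le 2n$) shows that a $2n$-periodic pairing blocking $(2n+1)$-windows must take exactly one adjacent pair $\{s,s+\vec v_i\}$ from each $v_i$-orbit, with the chosen pairs partitioning the torus; conversely such a partition lifts to a valid pairing strategy. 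But this is precisely the paper's own ``Kruczek and Sundberg, reformulated'' conjecture at the end of the chapter, i.e.\ you have rediscovered the equivalent finite statement, not resolved it. Everything after the reduction in your write-up is conditional (``the hope is'', ``provided the density constraint continues to be met''), and you concede the construction may not exist for adversarial direction sets; the existence of the required selection of one edge per orbit forming a perfect matching of $(\mathbb{Z}/2n)^d$ is exactly the missing step, and no version of Hall's theorem applies cheaply because the count is exactly tight (zero deficiency, and the two points claimed by a slot must be $v_i$-consecutive, which is not an SDR-type freedom).

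Two of the specific routes you propose also hit concrete obstructions. Refining the torus to $(\mathbb{Z}/2nk)^d$ buys no slack: on each orbit, now of length $2nk$, the same window count forces exactly $k$ adjacent pairs spaced exactly $2n$ apart, so the total budget is again exactly $(2nk)^d$ points and the problem is just as rigid after enlargement. And any ``canonical selection rule'' that factors through a single linear functional $\vec v\mapsto \vec v\cdot\vec r$ (the one-dimensional reduction powering the paper's asymptotic proof) is ruled out at $m=2n+1$ already for $n=3$ with directions $(1,0),(0,1),(1,1)$: the induced differences satisfy $d_3=d_1+d_2$, and the paper shows the resulting modular system is unsolvable, so a successful rule must be genuinely multidimensional (the paper notes a periodic pairing does exist for these three vectors, but not one of this product form). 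In short, your proposal is a correct restatement of the open problem together with a research plan; the conjecture itself remains unproved.
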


Our main result asymptotically solves their conjecture.

\begin{theorem}\label{one} There is an $m=2n+o(n)$ such that in the Maker-Breaker game played on $\Z^d$, where Maker needs to put at least $m$ of his marks consecutively in one of $n$ given winning directions, Breaker can force a draw using a pairing strategy.
\end{theorem}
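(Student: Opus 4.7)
The plan is to prove the stronger statement announced after the theorem: if $p \geq 2n+1$ is prime and $m=p+1$, then Breaker has a pairing strategy. Theorem~\ref{one} then follows because the Prime Number Theorem guarantees a prime in $[2n+1, 2n+o(n)]$. First I would choose a linear form $L\colon \Z^d \to \Z/p\Z$, $L(v)=\langle w,v\rangle \bmod p$, with $w\in(\Z/p\Z)^d$ selected so that $c_i := L(d_i)\not\equiv 0\pmod p$ for every $i$; such $w$ exists because the ``bad'' set $\bigcup_i\{w:\langle w,d_i\rangle\equiv 0\}$ is a union of $n$ hyperplanes in $(\Z/p\Z)^d$, covering at most $np^{d-1}<p^d$ points since $p>n$. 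Given constants $a_1,\ldots,a_n\in\Z/p\Z$ yet to be chosen, I would define the pairing $M$ by pairing $v$ with $v+d_i$ whenever $L(v)\equiv a_i\pmod p$. The covering property is automatic: along any $m$-AP $v_0,v_0+d_i,\ldots,v_0+(m-1)d_i$ in direction $d_i$ the labels $L(v_0)+kc_i$ for $k=0,1,\ldots,p-1$ exhaust $\Z/p\Z$ (as $c_i$ is invertible), so some $k\in\{0,\ldots,m-2\}$ has $L(v_0+kd_i)=a_i$ and the matched pair $(v_0+kd_i, v_0+(k+1)d_i)$ lies inside the winning set.

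The only remaining requirement is that $M$ be a genuine matching, i.e., no point lies in two pairs. Since a point $v$ participates in direction $i$'s pairing precisely when $L(v)\in\{a_i, a_i+c_i\}$, what is needed reduces to the following combinatorial claim in $\Z/p\Z$: for any $c_1,\ldots,c_n\in(\Z/p\Z)^*$, one can choose $a_1,\ldots,a_n\in\Z/p\Z$ such that the $n$ pairs $\{a_i, a_i+c_i\}$ are pairwise disjoint. I would prove this via the Combinatorial Nullstellensatz applied to
\[
F(x_1,\ldots,x_n) \;=\; \prod_{1\le i<j\le n}\;\prod_{(s,t)\in\{0,1\}^2}\bigl((x_i+sc_i)-(x_j+tc_j)\bigr),
\]
whose non-vanishing at $(a_1,\ldots,a_n)$ is exactly the disjointness condition. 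Setting $u_{ij}=x_i-x_j$, the inner product over $(s,t)$ for each pair $(i,j)$ is a polynomial of degree $4$ in $u_{ij}$ with leading term $u_{ij}^4$, so $\deg F = 4\binom{n}{2} = 2n(n-1)$; since this equals $\sum_i 2(n-1)$, the monomial $\prod_i x_i^{2(n-1)}$ in $F$ can only come from picking the leading $u_{ij}^4$ in every one of the $\binom{n}{2}$ inner products. Its coefficient therefore equals the coefficient of $\prod_i x_i^{2(n-1)}$ in $\prod_{i<j}(x_i-x_j)^4$, which by Dyson's constant-term identity (Good's theorem) equals $(2n)!/2^n$. For any prime $p\geq 2n+1$, this is nonzero in $\Z/p\Z$ because $p\nmid(2n)!$ and $p$ is odd, so the Combinatorial Nullstellensatz, applied with each $S_i=\Z/p\Z$ of size $p>2(n-1)$, produces the required $a_1,\ldots,a_n$.

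The main obstacle will be exactly this combinatorial claim: greedy placement at step $i$ must avoid as many as $4(i-1)$ values of $a_i$, so a direct greedy argument only gives $p\geq 4n-3$, hence $m\approx 4n$. The Nullstellensatz route is what allows pushing the constant down to the optimal $2$, and the delicate algebraic point will be to identify the critical coefficient as the Dyson value $(2n)!/2^n$ and to check that $p\geq 2n+1$ is precisely the threshold above which it survives modulo $p$.
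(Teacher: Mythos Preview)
Your proof is correct and follows essentially the same architecture as the paper's: choose a linear form modulo a prime $p\ge 2n+1$ that does not kill any direction, invoke the number-theoretic lemma that $n$ prescribed differences in $\Z/p\Z$ can be realised by $n$ pairwise disjoint pairs, and lift this to a pairing strategy blocking every $(p{+}1)$-term progression. The paper reaches the same endpoint via a detour---a compactness reduction to $[N]^d$ followed by an injective linear map into $\Z$---and then \emph{cites} the disjoint-pairs lemma (Preissmann--Mischler), whereas you work directly with $L:\Z^d\to\Z/p\Z$ and \emph{prove} the lemma via the Combinatorial Nullstellensatz and Dyson's identity; the paper explicitly notes that this Nullstellensatz route was later found by Kohen--Sadofschi and Karasev--Petrov, so your argument is a known, and somewhat cleaner, variant of the same proof.
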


In fact we prove the following theorem, which is clearly stronger because of the classical result \cite{H} showing that there is a prime between $n$ and $n+o(n)$.

\begin{theorem}\label{two} If $p=m-1\ge 2n+1$ is a prime, then in the Maker-Breaker game played on $\Z^d$, where Maker needs to put at least $m$ of his marks consecutively in one of $n$ given winning directions, Breaker can force a draw using a pairing strategy.
\end{theorem}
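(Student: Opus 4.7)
The plan is to produce an explicit pairing of $\Z^d$ from a single linear coloring modulo $p$, reducing the problem to a finite distinctness condition that we settle using the Combinatorial Nullstellensatz together with the Dyson--Good constant-term identity.

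First I would pick a vector $a\in\Z^d$ and define $\phi(x):=\langle a,x\rangle \bmod p$, choosing $a$ so that $c_i:=\phi(v_i)$ is nonzero in $\Z/p$ for every winning direction $v_i$. Such an $a$ exists because the ``bad'' choices form a union of $n$ hyperplanes in $(\Z/p)^d$ and $p>n$. Along direction $v_i$ the $\phi$--values then form an arithmetic progression with nonzero step $c_i$ in $\Z/p$, so in any block of $m=p+1$ consecutive lattice points in direction $v_i$ the first $p$ colours cover $\Z/p$ bijectively. Given to-be-chosen residues $k_1,\ldots,k_n\in\Z/p$, my pairing matches $x$ with $x+v_i$ whenever $\phi(x)=k_i$. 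The coverage just noted forces every winning set in direction $v_i$ to contain exactly one such pair, and the matching is well defined (no point in two pairs) precisely when the $2n$ residues
\[
k_1,\ k_1+c_1,\ k_2,\ k_2+c_2,\ \ldots,\ k_n,\ k_n+c_n
\]
are pairwise distinct modulo $p$. The whole theorem therefore reduces to the following.

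\emph{Key Lemma.} For any prime $p\ge 2n+1$ and any $c_1,\ldots,c_n\in(\Z/p)^{\ast}$, there exist $k_1,\ldots,k_n\in\Z/p$ for which the $2n$ residues above are pairwise distinct.

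I would prove the Key Lemma by applying Alon's Combinatorial Nullstellensatz to
\[
F(k_1,\ldots,k_n)\;=\;\prod_{1\le i<j\le n}(k_i-k_j)(k_i-k_j+c_i-c_j)(k_i-k_j-c_j)(k_i-k_j+c_i),
\]
the product, over all pairs $\{i,j\}$, of the four linear forms whose vanishing would create a collision among the $2n$ residues. Each factor is $k_i-k_j$ plus a constant in the $c$'s, so the top-degree homogeneous part of $F$, of total degree $4\binom{n}{2}=2n(n-1)$, is $\Delta^{4}$, where $\Delta=\prod_{i<j}(k_i-k_j)$ is the Vandermonde polynomial. By the Dyson--Good constant-term identity applied with all parameters equal to $2$, the coefficient of the balanced monomial $\prod_{i=1}^{n}k_i^{2(n-1)}$ in $\Delta^{4}$ equals $(2n)!/2^{n}$. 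Every prime factor of this integer is at most $2n$, so the coefficient is nonzero in $\Z/p$ as soon as $p\ge 2n+1$. Since $|\Z/p|=p>2(n-1)$, the Combinatorial Nullstellensatz then yields $k_i\in\Z/p$ with $F(k_1,\ldots,k_n)\ne 0$, which is exactly what the Key Lemma demands.

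The main obstacle is the coefficient computation: one must note that only the top-degree part of $F$ can contribute to the coefficient of $\prod k_i^{2(n-1)}$ (automatic because that monomial has maximal total degree), and then cleanly extract $(2n)!/2^{n}$ from Dyson's identity with parameters $a_1=\cdots=a_n=2$. Once the Key Lemma is in hand, the matching ``$x\mapsto x+v_i$ if $\phi(x)=k_i$'' is a valid pairing strategy for Breaker: every winning set of length $m=p+1$ in any of the $n$ directions contains one of its pairs by the cyclicity of $\phi$, and no lattice point ever lies in two pairs because the $2n$ residues are distinct.
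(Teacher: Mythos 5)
Your proposal is correct, and the pairing it constructs is essentially the one in the paper, but you reach it by a more direct and more self-contained route. The paper first uses a compactness argument (K\"onig's lemma) to pass to a finite board $[N]^d$, then injects $[N]^d$ into $\Z$ via a dot product with a vector $\vec r$ chosen, by a union-bound computation mod $p$, so that no transformed difference $d_i=\vec r\cdot \vec v_i$ is divisible by $p$, and only then defines the pairing on $\Z$ by residues mod $p$. Your linear form $\phi(x)=\langle a,x\rangle \bmod p$ does the same job directly on $\Z^d$: the pairing $x\leftrightarrow x+v_i$ when $\phi(x)=k_i$ is well defined on the infinite board at once, so neither injectivity nor the compactness step is needed, and your union-of-hyperplanes count (valid because the $v_i$ are primitive, hence nonzero mod $p$) replaces the paper's probabilistic choice of $\vec u$. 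The more substantial difference is that your Key Lemma is exactly the paper's Lemma \ref{prime}, which the paper cites (Preissmann--Mischler, with Combinatorial Nullstellensatz proofs by Kohen--Sadofschi and Karasev--Petrov) rather than proves; you supply a proof along those lines, and the details check out: the four linear factors per pair $\{i,j\}$ account for all cross-index collisions (the within-index collision $k_i=k_i+c_i$ is excluded by $c_i\neq 0$), the balanced monomial $\prod_i k_i^{2(n-1)}$ has total degree equal to $\deg F$, so only the top part $\Delta^4$ contributes, its coefficient is the Dyson constant term $(2n)!/2^n$ (with sign $+$ since the exponent parameter $2$ is even), which is a unit mod $p$ for $p\ge 2n+1$, and $p>2(n-1)$ lets the Nullstellensatz run with $S_i=\Z/p\Z$. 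What the paper's route buys is brevity, since the number-theoretic lemma is quoted and the one-dimensional reduction makes the ``every residue appears among $p$ consecutive terms'' step literal; what yours buys is a proof with no external citation and no finite-board or injectivity bookkeeping.
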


The proof of the theorem is by reduction to a game played on $\Z$ and then using the following recent number theoretic result of Preissmann and Mischler. Later this result was independently rediscovered by Kohen and Sadofschi \cite{KS} and by Karasev and Petrov \cite{KP}, they both gave a short proof using the Combinatorial Nullstellansatz \cite{A}. The latter paper also gives an even shorter topological proof and generalizations.

\begin{lemma}\label{prime}\cite{PM} Given $d_1,\ldots,d_n$ and $p\ge 2n+1$ prime, we can select $2n$ numbers, $x_1,\ldots,x_n,y_1,\ldots,y_n$ all different modulo $p$ such that $x_i+d_i\equiv y_i \mod p$.
\end{lemma}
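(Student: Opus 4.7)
The statement is equivalent to choosing $x_1,\ldots,x_n\in\mathbb{F}_p$ so that the $2n$ values $x_1,\ldots,x_n,\,x_1+d_1,\ldots,x_n+d_n$ are pairwise distinct mod $p$; this tacitly requires $d_i\not\equiv 0\pmod p$, which I will assume (otherwise $x_i=y_i$ and the conclusion fails). Expanded, the requirements are $x_i\neq x_j$, $x_i+d_i\neq x_j+d_j$ for $i<j$, and $x_i\neq x_j+d_j$ for $i\neq j$. I will encode all of these simultaneously as the nonvanishing of a single polynomial and apply Alon's Combinatorial Nullstellensatz over $\mathbb{F}_p$.

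The polynomial to use is
$$P(x_1,\ldots,x_n)=\prod_{i<j}(x_i-x_j)\cdot\prod_{i<j}\bigl((x_i-x_j)+(d_i-d_j)\bigr)\cdot\prod_{i\neq j}\bigl((x_i-x_j)-d_j\bigr),$$
so that $P(x_1,\ldots,x_n)\neq 0$ in $\mathbb{F}_p$ gives exactly the configuration we want. Its total degree is $\binom{n}{2}+\binom{n}{2}+n(n-1)=2n(n-1)$, so the natural target monomial is $\prod_i x_i^{2(n-1)}$, whose degree matches $\deg P$. The hypothesis $p\geq 2n+1$ gives $2(n-1)<p$, which is the exponent condition required by the Nullstellensatz when each $S_i=\mathbb{F}_p$.

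Because the target monomial has full degree, its coefficient in $P$ equals the coefficient of $\prod_i x_i^{2(n-1)}$ in the top-degree homogeneous part of $P$, obtained by setting every $d_i=0$. This top part simplifies to $(-1)^{\binom{n}{2}}V(x)^4$, where $V(x)=\prod_{i<j}(x_i-x_j)$, since the first two products each contribute a factor $V(x)$ and $\prod_{i\neq j}(x_i-x_j)=(-1)^{\binom{n}{2}}V(x)^2$. The key step is therefore to extract $[\prod_i x_i^{2(n-1)}]\,V(x)^4$, and this is exactly Dyson's conjecture (Gunson--Wilson--Good) with all exponents equal to $2$: since
$$\prod_{i\neq j}\Bigl(1-\frac{x_i}{x_j}\Bigr)^{2}=\frac{V(x)^4}{(x_1\cdots x_n)^{2(n-1)}},$$
the constant term on the left is $(2n)!/2^n$ by Dyson's formula, and it equals our coefficient up to sign.

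Since $p>2n$, the prime $p$ divides neither $(2n)!$ nor $2^n$, so the coefficient is nonzero in $\mathbb{F}_p$. Alon's theorem then produces $x_1,\ldots,x_n\in\mathbb{F}_p$ with $P\neq 0$, and setting $y_i=x_i+d_i$ finishes the proof. The main obstacle is the coefficient extraction, and my choice of $P$ was designed precisely so that its top-degree part equals $\pm V(x)^4$ and lands in the evaluable range of Dyson; asymmetric encodings of the non-edge conditions would instead produce higher powers of $V(x)$ or non-uniform monomial targets whose coefficients are governed only by harder Morris-- or Zeilberger--Bressoud--type identities.
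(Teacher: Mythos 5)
Your proof is correct. Note, however, that the thesis itself never proves Lemma \ref{prime}: it is quoted from Preissmann--Mischler, with the remark that Kohen--Sadofschi and Karasev--Petrov later gave short proofs via the Combinatorial Nullstellensatz. Your argument is essentially that Nullstellensatz proof: you eliminate the $y_i$ by the substitution $y_i=x_i+d_i$, encode all pairwise-distinctness conditions in the single polynomial $P$, observe that its top homogeneous part is $(-1)^{\binom{n}{2}}V(x)^4$ (the $d_i$ only enter lower-order terms), and extract the coefficient of $\prod_i x_i^{2(n-1)}$ via Dyson's constant-term identity with all exponents $2$, giving $\pm(2n)!/2^n$. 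Since this integer has all prime factors at most $2n<p$, it is nonzero in $\mathbb{F}_p$, and the exponent condition $2(n-1)<p$ needed for Alon's theorem with $S_i=\mathbb{F}_p$ is exactly supplied by $p\ge 2n+1$; all of these steps check out. Your caveat that one must assume $d_i\not\equiv 0\pmod p$ (otherwise $x_i\equiv y_i$ and the conclusion is vacuous) is also right and harmless: in the paper's application the vector $\vec r$ is chosen precisely so that no $d_i$ is divisible by $p$, so the lemma is only ever invoked under that hypothesis.
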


We prove our theorem in the next section and end the chapter with some additional remarks.

\section{Proof of Theorem \ref{two}}
%First we sketch the main ideas of the proof, then we give a more detailed version.

We consider the winning directions to be the primitive vectors\footnote{A vector $(v_1,\ldots,v_d)\in \Z^d$ is primitive if $gcd(v_1,\ldots,v_d)=1$.} $\vec v_1,...,\vec v_n$. 
Using a standard compactness argument it is enough to show that there is a pairing strategy if the board is $[N]^d$, where $[N]$ stands for $\{1,\ldots,N\}$. For interested readers, the compactness argument is discussed in detail at the end of this section.
%For all further arguments,  $\in [N]^d$. And for the game to be meaningful, one should think of $N$ as large enough to contain winning sets in all directions.

First we reduce the problem to one dimension.
Take a  vector $\vec r = (r_1,r_2,...,r_d)$ and transform each grid point $\vec v$ to $\vec v\cdot \vec r$. 
If $\vec r$ is such that $r_j>0$ and $r_{j+1} > N(r_1+\ldots+r_j)$ for all $j$, then this transformation is injective from $[N]^d$ to $\Z$ and each winning direction is transformed to some number, $d_i = |\vec r \cdot \vec v_i|$.\footnote{It is even possible that some of these numbers are zero, we will take care of this later.} 
So we have these $n$ differences, $d_1,\ldots, d_n$, and the problem reduces to avoiding arithmetic progressions of length $m$ with these differences.
%here it was aritheometic????
From the reduction it follows that if we have a pairing strategy for this game, we also have one for the original.

%\dom{rewrite}
Let $p$ be a prime such that $2n+1 \le p \le 2n+1+o(n)$. (In \cite{H} it was shown that we can always find such a $p$). 
If we pick a vector $\vec u$ % = [u_1,u_2,...,u_d]$
uniformly at random from $[p]^d$, 
then for any primitive vector $\vec v$, % \in [N]^d$,
 $\vec u \cdot \vec v$ 
%$(1\ldots p,pN+1\ldots p(N+1),p(N^2+2N)+1\ldots p(N^2+2N+1),\ldots, p(N+1)^{n-1}-p+1\ldots p(N+1)^{n-1})$, then
will be divisible by $p$ with probability $1/p$.
Since each winning direction was a primitive vector, 
using the union bound, the probability that at least one of the $\vec u \cdot \vec v_i$ is divisible by $p$ is at most $n/p<1/2$. 
So, there is a $\vec u' = (u_1',u_2',..,u_d') \in [p]^d$ such that none of $\vec u' \cdot \vec v_i$ is divisible by $p$. 
If we now take $\vec r = (r_1,r_2,..,r_d)$ such that $r_j = u_j' + (pN)^{j-1}$, then the dot product with $\vec r$ is injective from $[N]^d$ to $\Z$ and
%$$ N(r_1 + ... + r_i) < N( (p+1) + (p+ 2pN) + ... + (p + (2pN)^{i-1})$$
%$$ = ipN + \frac{(2^ip^iN^i - 1)N}{2pN-1}$$
%$$ < ipN + \frac{2^ip^iN^{i+1}}{pN} < 2^ip^iN^i \le r_{i+1}$$
%Also, this $r$ will satisfy the property that
none of the $d_i = \vec r \cdot \vec v_i$ are divisible by $p$, since $\forall j \ r_j \equiv u_j' \mod p$.  % as the probability that any of them is is less than $n/p<1/2$.

%Denote $d_i \mod p$ by $d_i'$.

We now apply Lemma \ref{prime} for $d_1,... , d_n$ to get $2n$ distinct numbers $x_1,x_2,...x_n,y_1,y_2,..,y_n$ such that $0 \le x_i,y_i < p$ and $x_i + d_i \equiv y_i \mod p$. 
Our pairing strategy is, for every $x \equiv x_i \mod p$, $x$ is paired to $x+d_i$ and if $x \equiv y_i \mod p$, then $x$ is paired to $x-d_i$.

To see that this is a good pairing strategy, consider an arithmetic progression $a_1,..., a_m$ of $m=p+1$ numbers with difference, say, $d_i$. Since $p$ and $d_i$ are coprimes, one of the numbers $a_1,..., a_{m-1}$, say $a_j$, must be such that $a_j \equiv x_i \mod p$. Hence $a_j,a_{j+1}$ must be paired in our pairing strategy, showing both cannot be occupied by Maker.\hfill$\Box$\\

For completeness here we sketch how the compactness argument goes. We show that it is sufficient to show that a pairing strategy exists for every finite $[N]^d$ board. For this we use the following lemma.\footnote{We use the version stated in \cite{D}.}

\begin{lemma}\label{kil}\cite{KIL} (K\"onig's Infinity Lemma) Let $V_0,V_1,..$ be an infinite sequence of disjoint non-empty finite sets, and let $G$ be a graph on their union. Assume that every vertex $v$ in a set $V_N$ with $N\ge1$ has a neighbor $f(v)$ in $V_{N-1}$. Then $G$ contains an infinite path, $v_0v_1...$ with $v_N \in V_N$ for all $N$.
\end{lemma}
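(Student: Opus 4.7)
The plan is to construct the infinite path $v_0 v_1 v_2 \ldots$ greedily, one vertex at a time, using a pigeonhole argument at each step to make sure the construction can be continued forever. The key definition I will introduce is that of a \emph{fertile} vertex: call $v\in V_N$ fertile if infinitely many elements of $\bigcup_{k\ge N} V_k$ reach $v$ under iterated application of $f$, that is, there exist infinitely many $k\ge N$ and $w_k\in V_k$ with $f^{(k-N)}(w_k)=v$, where $f^{(0)}$ is the identity and $f^{(j)}=f\circ f^{(j-1)}$. Observe that whenever $v\in V_N$ is fertile with $N\ge 1$, the image $f(v)\in V_{N-1}$ is automatically fertile as well, simply because every vertex contributing to $v$'s fertility also reaches $f(v)$ after one more application of $f$.

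The first step of the construction is to exhibit a fertile vertex $v_0\in V_0$. Because $\bigcup_N V_N$ is infinite while each $V_N$ is finite, and because every vertex of $V_N$ ($N\ge 1$) has a well-defined iterated image $f^{(N)}(\cdot)\in V_0$, the map that sends a vertex to its ultimate ancestor in $V_0$ has infinite domain and finite codomain; pigeonhole then produces a $v_0\in V_0$ with infinitely many preimages, which is exactly fertility.

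For the inductive step I assume $v_N\in V_N$ has already been chosen and is fertile. The set of descendants witnessing $v_N$'s fertility is infinite, but each descendant at level $k\ge N+1$ passes through some unique vertex $w\in V_{N+1}$ satisfying $f(w)=v_N$. Since the fiber $f^{-1}(v_N)\cap V_{N+1}$ is finite (as $V_{N+1}$ is finite), another application of pigeonhole gives a $w\in V_{N+1}$ with $f(w)=v_N$ that is itself fertile; set $v_{N+1}:=w$. By definition $v_N$ and $v_{N+1}$ are adjacent in $G$, so the sequence built this way is a genuine path in $G$ with $v_N\in V_N$ for every $N$.

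Nothing in the argument is really hard; it is the standard proof, and there is no combinatorial obstacle beyond being careful that fertility is preserved along the construction, which is exactly what the two pigeonhole arguments above guarantee. The only thing to double-check is that the $V_N$ are assumed pairwise disjoint, so the vertices $v_N$ produced at different stages are automatically distinct and the path is indeed an infinite simple path rather than a walk that might revisit vertices.
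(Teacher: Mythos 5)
Your proof is correct: the ``fertile vertex'' argument with its two pigeonhole steps is the standard proof of K\"onig's Infinity Lemma, and each step checks out --- fertility is preserved under $f$, the base case in $V_0$ follows from the infinite union mapping into the finite set $V_0$, and the inductive step correctly distributes the infinitely many descendants of $v_N$ over the finite fiber $f^{-1}(v_N)\cap V_{N+1}$, while disjointness of the $V_N$ makes the resulting path simple. Note, however, that the paper does not prove this lemma at all; it is quoted with a citation and used as a black box in the compactness argument, so there is no in-paper proof to compare against --- your argument is simply the classical one from the cited literature.
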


Given a pairing strategy for $[N_0]^d$, consider a smaller board $[N]^d$ where $N<N_0$. We can think of a pairing strategy as, essentially, a partition of $[N_0]^d$ into pairs and unpaired elements.\footnote{Note that a pairing strategy does not guarantee that every element is paired. It only states that every winning set has a pair. Hence there might be many unpaired elements in a pairing strategy.} We can construct a good pairing strategy for the smaller board by taking the restriction of these set of pairs to $[N]^d$ and leave the elements paired outside $[N]^d$ as unpaired elements. We call this as a restriction of the pairing strategy to the new board. As long as we do not change the length of the winning sets and the prescribed directions, any winning set in the $[N]^d$ board is also a winning set in the $[N_0]^d$ board and hence must have a pair from the restriction. Hence, the Breaker can block all winning pairs and the restriction of the pairing strategy is a valid strategy for Breaker for the smaller board. 

We can now prove the following theorem,
\begin{theorem}\label{compact} Given a fixed set $S,\ |S|=n$, of winning directions,  and positive integer $m$, if Breaker has a pairing strategy for all boards $[N]^d$ % with $N>N_0$
 and length of winning sets equal to $m$, then Breaker also has a pairing strategy for the $\Z^d$ board. 
\end{theorem}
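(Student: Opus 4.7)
The plan is to apply K\"onig's Infinity Lemma (Lemma \ref{kil}) directly, using the hypothesized pairing strategies on an exhausting sequence of finite boxes as the level sets $V_N$.

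First I would fix an exhaustion of $\Z^d$ by finite boxes, say $B_N=[-N,N]^d$. Let $V_N$ denote the set of pairing strategies on $B_N$, i.e.\ partitions of $B_N$ into pairs and singletons such that every winning $m$-term arithmetic progression in one of the $n$ given directions that is fully contained in $B_N$ contains at least one of the pairs. Each $V_N$ is finite (it is a set of partitions of a finite set) and, by the hypothesis together with translation-invariance of the game, nonempty.

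Next I would define a restriction map $f\colon V_N\to V_{N-1}$ for every $N\ge 1$. Given $\sigma\in V_N$, declare $f(\sigma)$ to keep exactly those pairs $\{a,b\}$ of $\sigma$ with both $a,b\in B_{N-1}$, leaving every other point of $B_{N-1}$ unpaired. This is a legal pairing strategy on $B_{N-1}$: any winning set $W\subseteq B_{N-1}$ is itself a winning set in $B_N$, so it contains some pair $\{a,b\}$ of $\sigma$; since $\{a,b\}\subseteq W\subseteq B_{N-1}$, this pair survives in $f(\sigma)$ and blocks $W$. This is exactly the restriction operation already described informally in the paragraph before Lemma \ref{kil}.

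Now I would build the graph $G$ on $\bigcup_{N\ge 0}V_N$ whose edges are the pairs $\{\sigma,f(\sigma)\}$ for $\sigma\in V_N$, $N\ge 1$. All hypotheses of Lemma \ref{kil} are met, so $G$ contains an infinite path $\sigma_0,\sigma_1,\sigma_2,\ldots$ with $\sigma_N\in V_N$ and $\sigma_{N-1}=f(\sigma_N)$ for every $N\ge 1$. In particular, the strategies $\sigma_N$ are consistent: if $\{a,b\}$ is a pair of $\sigma_N$ and $a,b\in B_{N-1}$, then $\{a,b\}$ is also a pair of $\sigma_{N-1}$, and more generally of every $\sigma_{N'}$ with $N'\ge N$.

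Finally I would patch these together into a single pairing $\sigma_\infty$ on all of $\Z^d$ by declaring $a,b\in\Z^d$ paired iff there exists (equivalently: for all sufficiently large) $N$ with $a,b\in B_N$ and $\{a,b\}$ a pair of $\sigma_N$; consistency guarantees that this is a well-defined partition of $\Z^d$ into pairs and singletons. Any winning set $W\subset\Z^d$ has size $m$ and is therefore contained in $B_N$ for all large enough $N$; by definition of $V_N$, $W$ contains a pair of $\sigma_N$, hence a pair of $\sigma_\infty$. Thus $\sigma_\infty$ is a pairing strategy for Breaker on $\Z^d$, completing the proof. There is no real obstacle here beyond checking that the restriction map is well-defined and that the diagonal argument via K\"onig's Lemma preserves the ``blocks every winning set'' condition, both of which reduce to the simple observation that every finite winning set eventually sits inside some $B_N$.
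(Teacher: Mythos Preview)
Your proof is correct and follows essentially the same approach as the paper: both use the boxes $\{-N,\ldots,N\}^d$, take $V_N$ to be the set of pairing strategies on that box, connect a strategy to its restriction, and apply K\"onig's Infinity Lemma to obtain a coherent sequence whose union is the desired pairing on $\Z^d$. Your write-up is in fact more careful than the paper's, explicitly noting the translation-invariance needed to pass from the hypothesis on $[N]^d$ to $[-N,N]^d$ and verifying that the restriction map lands in $V_{N-1}$.
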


%Let $n_0$ be the smallest integer such that the game with $S,m$ and $[n_0]^d$ board is meaningful.???
We will apply K\"onig's Infinity Lemma to prove the theorem. Let $V_N$ be the set of all pairing strategies on the $\{-N,\ldots,N\}^d$ board with winning sets as defined in the theorem. We say a strategy in $V_{N-1}$ and a strategy in $V_N$ have an edge between them if the former is a restriction of the latter. It is easy to see that every vertex in $V_N$ does have an edge to its restriction in $V_{N-1}$. Hence, by the lemma, we must have an infinite path $v_0v_1...$. The union of all these pairing strategies gives a valid pairing strategy for the infinite game.

%Our pairing strategy is to pair $k$ to $k+ d_i'$ if $k\equiv x_i \mod p$.
%It is easy to see that this is a good matching as $d_i$ and $p$ are coprimes.

\section{Possible further improvements and remarks}
As we said before, if $m\le 2n$, then the second player cannot have a pairing strategy draw.
This can be seen as follows. On one hand, in any pairing strategy, from any $m$ consecutive points in a winning direction, there must be at least two points paired to each other in this direction.
On the other hand, there must be a winning direction in which at most $1/n$ of all points are matched to another in this direction.
If we pick a set of size $m-1$ uniformly randomly in this direction, then the expected number of points matched in this direction will be at most $(m-1)/n< 2$.
Thus, there is a set of size $m-1$ that contains only one such point.
Its matching point can now be avoided by extending this set to one way or the other, thereby giving us a winning set with no matched pair.

If $n=1$ or $2$, then a not too deep case analysis shows that the first player has a winning strategy if $m=2n$, even in the {\em strong game}, where the second player also wins if he occupies a winning set.
Moreover, the second player has a pairing strategy for $m=2n+1$ if $n=1$ or $2$, thus, in this case, the conjecture is tight.
However, for higher values, it seems that Breaker can always do better than just playing a pairing strategy, so we should not expect this strategy the best to achieve a draw. Quite tight bounds have been proved for Maker-Breaker games with {\em potential} based arguments, for the latest in generalization of Tic-Tac-Toe games, see \cite{KS2}.
Despite this, from a combinatorial point of view, it still remains an interesting question to determine the best pairing strategy. Unfortunately our proof can only give $2n+2$ (if $2n+1$ is a prime) which is still one bigger than the conjecture.

One could hope that maybe we could achieve a better bound using a stronger result than Lemma \ref{prime} (see for example the conjecture of Roland Bacher in \cite{PM}, whom we would like to thank for directing us to it \cite{Ba}), however, already for $n=3$, our method cannot work. 
Consider the three directions $(1,0),(0,1),(1,1)$. Optimally, we would hope to map them to three numbers, $d_1,d_2,d_3$, all coprime to $6$, such that we can find $x_1,x_2,x_3,y_1,y_2,y_3$ all different modulo $6$ such that $x_i+d_i\equiv y_i \mod 6$. 
But this is impossible since $d_3=d_1+d_2$, so we cannot even fulfill the condition that the differences have to be coprimes to $6$. But even if we forget about that condition, it would still be impossible to find a triple satisfying $d_3=d_1+d_2$. 
If we consider a pairing strategy where the pair of %$v$ is determined by $f(v \cdot r)$, or in other words, there is a vector $r$ such that the pair of
any grid point $\vec v$, depends only on $v \cdot r$, then the above argument shows that such a pairing strategy does not exist for the three vectors $(1,0),(0,1),(1,1)$.
%This shows that there is no pairing strategy for these three vectors satisfying the additional condition that the tile of each gridpoint $v$ is only a function of $r\cdot v$ for some $r$. 
However, it is not hard to find a suitable periodic pairing strategy for these three vectors. We would like to end with an equivalent formulation of Conjecture 1.

\begin{conjecture}[Kruczek and Sundberg, reformulated] Suppose we are given $n$ primitive vectors, $\vec v_i$ of $\Z_{2n}^{d}$ for $i\in [n]$. Is it always possible to find a partition of $\Z_{2n}^{d}$ into $\vec x_i^j,\vec y_i^j$ for $i\in [n], j\in [2n]$ such that $\vec x_i^j+\vec v_i=\vec y_i^j$ and $\vec x_i^j-\vec x_i^{j'}$ is not a multiple of $\vec v_i$ for $j\ne j'$?
\end{conjecture}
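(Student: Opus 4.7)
The plan is first to concentrate on the critical case $d=2$, where a partition must be a perfect matching: $(2n)^2 = 4n^2$ elements and exactly $2n^2$ pairs, with no unpaired singletons allowed. For $d \geq 3$ unpaired singletons are permitted and the task is strictly easier, so any method working for $d=2$ should extend. When $\vec v_i$ is primitive in $\Z_{2n}^2$ its cyclic subgroup $\langle \vec v_i\rangle$ has order $2n$, so $\Z_{2n}^2$ splits into $2n$ cyclic $\vec v_i$-orbits of length $2n$, and the condition that the starts $\vec x_i^j$ are pairwise inequivalent modulo $\vec v_i$ forces them to occupy distinct orbits. Thus the conjecture becomes: pick one ``consecutive edge'' in each $\vec v_i$-orbit so that the $2n^2$ edges chosen across all $i$ form a perfect matching of $\Z_{2n}^2$.

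My main attempt would follow the algebraic method of Kohen--Sadofschi and Karasev--Petrov used for Lemma~\ref{prime}. Parameterize each chosen edge by a variable $t_i^c \in \{0,1,\ldots,2n-1\}$ giving the position inside coset $c$ of $\langle \vec v_i \rangle$ where the pair starts, and set $\vec x_i^c = c_0 + t_i^c \vec v_i$ with $c_0$ a fixed representative of $c$. Construct a polynomial $P$ in the $2n^2$ variables $t_i^c$ whose nonzero evaluations correspond exactly to valid partitions: $P$ is the product, over all ordered pairs of distinct slots (be they $\vec x$'s or $\vec y$'s), of the differences of their two coordinate projections to $\Z_{2n}$. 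A valid partition is then witnessed by $P(t^*)\neq 0$ at some integer point $t^*$, and by the Combinatorial Nullstellensatz it suffices to display a monomial in which each $t_i^c$ appears with degree $2n-1$ and whose coefficient is a nonzero residue. Because $2n$ is typically not prime, the Nullstellensatz cannot be applied directly over $\Z_{2n}$; instead I would use the Chinese remainder theorem to split $\Z_{2n}$ into its prime-power factors and attempt the argument on each factor, hoping to reduce to (or mildly extend) the prime case already controlled by Lemma~\ref{prime}.

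As a complementary approach I would develop a greedy/inductive construction: process the directions in some order and, at the $i$-th step, select $2n$ edges, one per coset of $\langle \vec v_i\rangle$, from the still-unused vertices. The availability of such a selection reduces to a perfect matching in a bipartite graph between the $2n$ cosets and the $2n$ possible edge positions in each coset, to which one would apply Hall's condition. Randomizing both the order of directions and the tie-breaking inside each coset may suffice to handle typical configurations even when deterministic Hall verification seems out of reach, and might, combined with the algebraic skeleton, cover all adversarial cases.

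The main obstacle I anticipate is the non-primality of $2n$. The proof of Theorem~\ref{two} exploits primality twice---to project randomly to one dimension and to invoke Lemma~\ref{prime}---and neither step survives for general $2n$. When $2n$ is highly composite, distinct primitive vectors can generate cyclic subgroups whose pairwise intersections are large, coupling the $2n^2$ coset choices in ways that can force Hall's condition to fail for adversarial vector configurations and that can also destroy the clean monomial arithmetic needed for the Nullstellensatz. The deepest point of the plan is therefore finding, either via prime-power decomposition or via a new identity, the right analog of Lemma~\ref{prime} for non-prime moduli; without such an analog I do not see a route to complete the argument, and this is where I expect the conjecture truly to live or die.
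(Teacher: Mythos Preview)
This statement appears in the paper as an open \emph{conjecture} (a reformulation of Conjecture~1 of Kruczek and Sundberg), not as a proved result; the paper contains no proof of it. Theorem~\ref{two} establishes only the weaker bound $m=p+1$ for a prime $p\geq 2n+1$, and the surrounding discussion explicitly names the non-primality of $2n$ as the barrier to reaching $m=2n+1$ by the paper's method --- even exhibiting the three directions $(1,0),(0,1),(1,1)$ to show that the one-dimensional projection step cannot work in general.

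Your proposal is therefore a research plan rather than a proof, and you correctly locate the central difficulty: both the random projection to $\Z$ and Lemma~\ref{prime} require a prime modulus, and no known analog handles composite $2n$. The Nullstellensatz outline and the Hall-condition/greedy alternative are natural first moves, but neither closes the gap you yourself flag. In particular, the Chinese remainder decomposition you suggest does not obviously reduce the problem to prime-modulus instances of Lemma~\ref{prime}: primitivity of the $\vec v_i$ in $\Z_{2n}^d$ need not survive projection to each prime-power factor, and the ``one pair per coset'' constraint does not factor through the CRT splitting in any straightforward way.

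One further point on your setup: the conjecture asks for a \emph{partition} of $\Z_{2n}^d$, which for $d\geq 3$ is incompatible with the stated index range $j\in[2n]$ (since $4n^2<(2n)^d$). The intended reading is either $d=2$ or $j\in[(2n)^{d-1}]$, not that unpaired singletons are permitted; your claim that $d\geq 3$ is ``strictly easier'' because singletons are allowed rests on a misreading of the statement.
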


\newpage

\appendix
\appendixpage
\addappheadtotoc

\chapter{Program code}
The following code is in Maple.

\begin{verbatim}
#For accessing log, ceil functions.
with(MTM);

#fmax is a procedure that computes the girth for which a graph on N
#vertices will have the largest supercyle.
#Here, mg denotes the maximum possible girth, max and g will have the
#values of the maximum size of the supercycle and the girth at which 
#it occurs respectively. The procedure returns 2s-2, if this value is
#less than N, we can apply Lemma 2.6 and 2.8 to draw the graphs on N
#vertices.
fmax := proc (N) local g, mg, max, i, exp; 

#Initializations
max := -1;
g := 0;
mg := 2*ceil(evalf(log2((1/3)*N+1))); 

if mg < 3 then RETURN([N, 2*max-2, mg, g]) fi; 

#Main search cycle.
for i from 3 while i <= mg do 
     exp := 2*ceil(evalf(log2((N+1)/i)))+i-1; 
     if max < exp then max := exp; g := i fi
end do; 

RETURN([N, 2*max-2, mg, g]) 
end proc;

seq(fmax(i), i = 6 .. 42, 2);
[6,10,4,3], [8,12,4,4], [10,14,6,5], [12,16,6,6], [14,16,6,6],
[16,16,6,4], [18,16,6,4], [20,18,6,5], [22,20,8,8], [24,20,8,6],
[26,20,8,6], [28,22,8,7], [30,22,8,7], [32,24,8,8], [34,24,8,8], 
[36,24,8,8], [38,24,8,8], [40,24,8,8], [42,24,8,8]
\end{verbatim}

%\section{first}

%\renewcommand*{\bibname}{}
%\chapter{Bibliography} 
%jsalfjdkslag
\clearpage
\addcontentsline{toc}{chapter}{B Bibliography}
\bibliographystyle{plain}
\bibliography{thesis}
\end{document}